\documentclass[9pt,a4paper,reqno]{amsart}
\pdfoutput=1
\usepackage{enumitem}
\usepackage[utf8]{inputenc}
\pdfoutput=1
\usepackage{parskip}
\usepackage{mathrsfs}
\usepackage{amsthm}
\usepackage{amsmath}
\usepackage{amsfonts}
\usepackage{amssymb}
\usepackage{graphicx}
\usepackage{color}
\usepackage{graphics}
\usepackage{eepic}
\usepackage{nicefrac}
\usepackage{hyperref}
\usepackage{bigints}
\usepackage{bbm}
\usepackage{array}
\usepackage{caption}
\usepackage[utf8]{inputenc}
\usepackage{amsmath}
\usepackage[nobysame]{amsrefs}

\makeatletter
\@addtoreset{equation}{chapter}
\makeatother
\usepackage{tikzsymbols}
\usepackage{booktabs}
\hypersetup{
 colorlinks   = true,
 urlcolor     = blue,
 linkcolor    = blue,
 citecolor   = red,
 bookmarksopen=true
}

\newcommand{\ignore}[1]{}

\usepackage[colorinlistoftodos]{todonotes}

\newcommand{\be}{\begin{equation}}
\newcommand{\ee}{\end{equation}}

\renewcommand{\Re}{\operatorname{Re}}

\DeclareMathOperator{\Var}{Var}
\newcommand{\C}{{\mathbb{C}}}

\newcommand{\N}{{\mathbb{N}}}

\newcommand{\D}{{\mathbb{D}}}

\newcommand{\E}{{\mathbb{E}}}

\usepackage{mathtools} 

\newcommand{\tildea}[1]{\overset{\sim}{#1}}

\newtheoremstyle{mine}
{\baselineskip}
{\baselineskip}
{\itshape}
{
}
{\bfseries}
{.}
{.5em}
{#1 #2\ifx#3\relax\else~(#3)\fi}

\theoremstyle{mine}

\newtheorem{thm}{Theorem}[section]

\newtheorem{prop}[thm]{Proposition}

\newtheorem{Lemma}[thm]{Lemma}

\newtheorem*{claim*}{Claim}

\newtheorem{remark}[thm]{Remark}

\theoremstyle{definition}

\theoremstyle{remark}

 \renewcommand\epsilon{\varepsilon}

\usepackage{palatino}
 \usepackage{color, xcolor, mathrsfs, graphicx, hyperref, framed}

\newcommand{\mb}{\mathbb}

\usepackage[italian,english]{babel}
\usepackage[autostyle]{csquotes}  

\title[Number of components of random lemniscates]{Asymptotics of the number of components of random polynomial lemniscates}

\author{Subhajit Ghosh, Koushik Ramachandran, Atul Shekhar }

\address[Subhajit Ghosh]{Department of Mathematics, Bar-Ilan University, Ramat Gan, 5290002, Israel
}
\email{ghoshsu1@biu.ac.il}

\address[Koushik Ramachandran]{Tata Institute of Fundamental Research, Centre For Applicable Mathematics, Post Bag No 6503, GKVK Post Office, Sharada Nagar, Chikkabommsandra, Bangalore
560065, India.}
\email{koushik@tifrbng.res.in}

\address[Atul Shekhar]{Tata Institute of Fundamental Research, Centre For Applicable Mathematics, Post Bag No 6503, GKVK Post Office, Sharada Nagar, Chikkabommsandra, Bangalore
560065, India.}
\email{atul@tifrbng.res.in}

\begin{document}

\begin{abstract}
    Consider a sequence of random polynomials $P_n(z) = \prod_{k=1}^{n}(z - X_k)$, where $\{X_k\}_k$ are i.i.d. random variables distributed uniformly on the unit disc $\D$. Let $\Lambda_n = \{z \in \C:  |P_n(z)| < 1\}$ be the lemniscate of $P_n$, and let $\mathscr{C}(\Lambda_n)$ be the number of connected components of $\Lambda_n$. In this paper, we prove that $\lim_{n\to\infty}\frac{\E[\mathscr{C}(\Lambda_n)]}{\sqrt{n}}= \gamma$, and identify the constant $\gamma$.\\ 
\end{abstract}

\maketitle

\section{Introduction}\label{sec1}
In this article we continue the study of the topology of  random polynomial lemniscates that was initiated in \cite{Ghosh-numberofcomponents}. The setting of the problem is as follows. Let $\{X_k\}_{k\geq 1 }$ be an i.i.d. sequence of random variables distributed uniformly in the unit disc $\mathbb{D}$. For each $n,$ consider the random polynomial
\[P_n(z) = (z-X_1)(z-X_2)...(z-X_n).\]

The unit lemniscate of $P_n$ is the random open set defined by \[\Lambda_n = \{z \in \C:  |P_n(z)| < 1\}.\] 
Let $\mathscr{C}(\Lambda_n)$ be the number of connected components of $\Lambda_n$. As a first step towards understanding the random variable $\mathscr{C}(\Lambda_n),$ we would like to know the behavior of the expectation $\E(\mathscr{C}(\Lambda_n))$ for large $n$. Taking logarithms, we note that $\Lambda_n = \{z: \log|P_n(z)| < 0\}$. The advantage with rewriting $\Lambda_n$ this way is that for each $z\in\C$, the quantity $\log|P_n(z)|$ is a sum of $n$ i.i.d. random variables. This lends itself to a lot of probabilistic analysis in the form of the strong law and central limit theorems. Using this connection, the first named author (S.G) in \cite{Ghosh-numberofcomponents}, proved the following Theorem.

\begin{thm}[Ghosh, $2024$]\label{Basic result}
There exist positive constants $C_1$ and $ C_2$ such that for all large $n$ we have
\[C_1\sqrt{n}\leq \E(\mathscr{C}(\Lambda_n))\leq C_2\sqrt{n}.\]
\end{thm}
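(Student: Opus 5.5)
The plan is to prove the two bounds separately. Both rest on the observation that for $z\in\mathbb{D}$,
\[\log|P_n(z)|=\sum_{k=1}^n \log|z-X_k|\]
is a sum of i.i.d. terms with mean $\int_{\mathbb D}\log|z-w|\,\frac{dA(w)}{\pi}=\frac{|z|^2-1}{2}$. This mean is negative in the interior, so $\log|P_n(z)|\approx n(|z|^2-1)/2+O(\sqrt n)$ there. Consequently, outside a boundary annulus of width about $n^{-1/2}$ around $|z|=1$, most of the disc lies in $\Lambda_n$, and $\Lambda_n$ is essentially empty once $|z|>1+Cn^{-1/2}$. All the interesting components therefore live in the annulus $A_n=\{1-Cn^{-1/2}<|z|<1+Cn^{-1/2}\}$.

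\textbf{Upper bound.} Every component of $\Lambda_n$ contains a zero of $P_n$, since $\log|P_n|$ is subharmonic and must attain its minimum $-\infty$ inside a bounded component where it is $<0$. Hence
\[\mathscr{C}(\Lambda_n)\le 1+\#\{k: X_k\text{ lies in a component not containing the ``big'' interior component}\}.\]
I will show that, with overwhelming probability, $\Lambda_n$ contains the disc $|z|\le 1-Cn^{-1/2}$ as part of a single component. This follows by covering that disc with a grid of mesh $n^{-2}$, using Bernstein/Hoeffding-type concentration for the truncated i.i.d. sum at each grid point, and controlling oscillation between grid points through a bound on $|P_n'|/|P_n|$ away from the zeros. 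Then only zeros in $A_n$ can produce extra components, and $\E\#\{k:X_k\in A_n\}\asymp n\cdot n^{-1/2}=\sqrt n$. A zero very close to the outer edge, $|X_k|>1+\dots$, is impossible since $|X_k|<1$; the crude count of zeros in the inner half-annulus is already $O(\sqrt n)$, which gives $C_2\sqrt n$.

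\textbf{Lower bound.} I will exhibit $\gtrsim\sqrt n$ separate small components near the boundary. Partition the arc region $1-n^{-1/2}<|z|<1$ into about $\sqrt n$ boxes of size $n^{-1/2}\times n^{-1/2}$ (up to constants), roughly $\sqrt n$ of them in angle, each containing $O(1)$ expected zeros. For a box $B$, consider the event $E_B$ on which
\begin{enumerate}[label=(\roman*)]
\item $B$ contains exactly one zero $X_k$, located near the box center;
\item the contribution of all other zeros to $\log|P_n|$ on $\partial B'$, for a slightly smaller concentric region $B'$, stays above $\log(1/\delta)$, where $\delta$ is the size scale of $B'$.
\end{enumerate}
On $E_B$, $|P_n|\ge 1$ on $\partial B'$ while $P_n(X_k)=0$, so $B'$ contains a component of $\Lambda_n$ that is disjoint from those of other boxes. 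For (ii), I write $\log|P_n(z)|-\log|z-X_k|$ as a sum of $n-1$ i.i.d. terms whose mean near $|z|=1$ is $O(\sqrt n\cdot n^{-1/2})=O(1)$ in the appropriate scaling. By the CLT its fluctuation is of order $\sqrt n$ in $\log$-size; the key point is that the variance of $\log|z-X|$ is $O(1)$, so the fluctuation is $O(\sqrt n)$, which dominates the drift at radius $1+O(n^{-1/2})$. A positive-probability upward fluctuation of order $\sqrt n$ therefore occurs with probability bounded below uniformly in $B$. Together with local regularity of the sum across $\partial B'$ (its gradient is controlled when no other zeros are nearby), this gives $\mathbb P(E_B)\ge c>0$. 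Summing over the $\sim\sqrt n$ boxes gives $\E\mathscr C(\Lambda_n)\ge c\sqrt n$.

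\textbf{Main obstacle.} The main difficulty is step (ii) of the lower bound: getting a uniform positive lower probability that $\log|P_n|\ge0$ on an entire small closed curve around an isolated zero, rather than merely at a single point. I would first try conditioning on the configuration in a neighbourhood of size $Kn^{-1/2}$, so that the far-field sum is a smooth harmonic function on $B$ with controlled gradient, and then apply a CLT to its value at the box center.
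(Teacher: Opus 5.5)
\textbf{The upper bound has a genuine gap.} Your lower bound is essentially the route the paper attributes to Ghosh: isolated small components near $\partial\D$ plus a CLT/Berry--Esseen estimate.

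The claim that $\overline{\D}_{1-Cn^{-1/2}}\subset\Lambda_n$ with overwhelming probability is false for every fixed $C$. Take a single point with $|z|=1-Cn^{-1/2}$. Then $\log|P_n(z)|$ has mean $n(|z|^2-1)/2=-C\sqrt n+C^2/2$ and standard deviation $\sim\sigma(1)\sqrt n$. By the CLT, $\mathbb{P}(|P_n(z)|\ge 1)\to 1-\Phi(C/\sigma(1))>0$. So the bad event has probability bounded below, and on it you only have $\mathscr{C}(\Lambda_n)\le n$. Your argument therefore yields only $\E[\mathscr{C}(\Lambda_n)]\le 1+O(\sqrt n)+n\,\mathbb{P}(\mathrm{bad})=O(n)$.

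Your grid-plus-Bernstein step is a union bound over polynomially many points, each failing with probability about $e^{-cC^2}$. It closes only for $C\gtrsim\sqrt{\log n}$. That is exactly the inradius bound of Proposition~\ref{location of the components of the lemniscate: lemma}, with $r_n=\kappa\sqrt{\log n/n}$. Counting all zeros in that thicker annulus gives only $O(\sqrt{n\log n})$, which is precisely the logarithmic loss the paper points out.

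\textbf{The missing idea.} You must not count every zero in the boundary layer, only those that actually generate a component, with a weight that decays in the distance to $\partial\D$. The route sketched in the paper does this through Lemma~\ref{components=critical}: $\mathscr{C}(\Lambda_n)-1$ is the number of critical points with $|P_n(\beta_j)|>1$. The Kabluchko--Seidel pairing places a critical point at distance $\asymp 1/n$ from a typical root $X_k$, with critical value of order $|Q_k(X_k)|/n$, where $Q_k=P_n/(z-X_k)$.

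So a root contributes only if $\sum_{j\neq k}\log|X_k-X_j|\ge \log n+O(1)$. For $|X_k|=1-s/\sqrt n$ this has probability about $1-\Phi(s/\sigma(1))$. Integrating in $s$ gives $O(n^{-1/2})$ per root and $O(\sqrt n)$ in total, with no $\sqrt{\log n}$ loss. This is also the heuristic behind $M_n=\mathbb{P}(O_n)$ in the present paper.

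\textbf{If you want to keep the disc-containment approach.} You would need a layered version. On $\{\overline{\D}_{\rho}\subset\Lambda_n\}$ one has $\mathscr{C}(\Lambda_n)\le 1+\#\{k:|X_k|\ge\rho\}$; apply this with a random dyadic $\rho=1-tn^{-1/2}$. You then need a bound on $\mathbb{P}\bigl(\sup_{|z|=1-tn^{-1/2}}\log|P_n(z)|\ge 0\bigr)$ that decays in $t$, uniformly in $n$, for $1\le t\lesssim\sqrt{\log n}$; decay faster than $t^{-1}$ suffices. No union bound over a net can give this. It requires a genuine chaining argument that uses the macroscopic correlations of $\log|P_n|$ along the circle.

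\textbf{A smaller point in your lower bound.} With $B'$ of size $n^{-1/2}$, the far-field gradient is $\approx\bigl|\sum_{j\neq k}(z-X_j)^{-1}\bigr|\approx n$. Its oscillation on $\partial B'$ is then of order $\sqrt n$, the same order as the CLT fluctuation, not negligible.

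It is cleaner to take $B'=B(X_k,\epsilon/n)$. The oscillation there is $O(\epsilon M)$ once no other zero lies within $\eta n^{-1/2}$ of $X_k$ and $\sum_{j\neq k}|X_k-X_j|^{-1}\le Mn$ (which holds with good probability by Markov). Condition (ii) then reduces to $\sum_{j\neq k}\log|X_k-X_j|\ge\log n+O(1)$, to which Berry--Esseen applies directly.
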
 

 One can give a crude upper bound for the number of components from deterministic considerations. The maximum principle from complex analysis implies that every connected component of $\Lambda_n$ must contain a root of $P_n$. Hence $\mathscr{C}(\Lambda_n)\leq n$, but we expect that typically it should be of smaller order. Theorem \ref{Basic result} quantifies this belief. Let us first recall the strategy of \cite{Ghosh-numberofcomponents} to prove Theorem \ref{Basic result}. A routine computation gives that $\E(\log|P_n(z)|) = n\left(\frac{|z|^2 - 1}{2}\right)$ for $z\in \mathbb{D}$. This is negative throughout the disc. Next, concentration inequalities applied to the random variable $\log|P_n(z)|$ show that "nearly" all of the unit disc $\D$ is contained in $\Lambda_n$ with very high probability. Hence any small components must lie very close to the unit circle. But for points near the unit circle $\E(\log|P_n(z)|)\approx 0$. The fluctuation of $\log|P_n(z)|$ for these points is governed by the Central Limit Theorem. Indeed, the lower bound in Theorem \ref{Basic result} comes from estimating the number of small components near the unit circle, in conjunction with the Berry-Esseen Theorem. The upper bound is a bit more delicate to handle. A basic tool here is Lemma \ref{components=critical} below, relating $\mathscr{C}(\Lambda_n)$ to the number of critical points of $P_n$ that lie outside $\Lambda_n$. Combining this with a deep result on the pairing phenomenon of roots and critical points of random polynomials \cite{Kabluchko-Seidel} finishes the proof (see Section \ref{sketch} below for more on this).

 \vspace{0.1in}
 
 Once the bounds in Theorem \ref{Basic result} are proven, a natural question is whether it is true that $\lim_{n\to\infty}\frac{\E(\mathscr{C}(\Lambda_n))}{\sqrt{n}}$ exists. Our main result below establishes this and identifies the limiting value.
\begin{thm}\label{main-thm}
Let $\mathscr{C}(\Lambda_n)$ be as above. Then 
\begin{align}\label{mainresult}
       \lim_{n \to \infty} \frac{\mb{E}[\mathscr{C}(\Lambda_n)]}{\sqrt{n}} = \sqrt{\frac{\zeta(2) -1}{\pi}}.
    \end{align}
\end{thm}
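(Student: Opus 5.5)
The plan is to turn the count of components into a count of critical points, and then to evaluate $P_n$ at those critical points using the root–critical point pairing.

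\textbf{Reduction to critical values.} By Lemma \ref{components=critical}, almost surely $\mathscr{C}(\Lambda_n) = 1 + \#\{w : P_n'(w)=0,\ |P_n(w)|\ge 1\}$. The reason is that as the level $t$ decreases from $\infty$ to $1$, the set $\{|P_n|<t\}$ splits into one more component each time $t$ crosses a (generically simple) critical value. Next I would invoke the pairing of \cite{Kabluchko-Seidel}. Outside an exceptional set of roots of cardinality $o(\sqrt n)$ in expectation, each root $X_k$ has a unique critical point $w_k$ with
\[
w_k - X_k = -\frac{1}{\sum_{j\ne k}(X_k-X_j)^{-1}} + (\text{smaller order}) \approx -\frac{1}{n\overline{X_k}} .
\]
This uses $\frac1n\sum_j (z-X_j)^{-1}\to \bar z$, the Cauchy transform of the uniform law on $\D$. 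Consequently
\[
\E[\mathscr{C}(\Lambda_n)] = n\,\mb{P}\big(|P_n(w_1)|\ge 1\big) + o(\sqrt n).
\]
Controlling the unpaired critical points and the roots far from $\partial\D$ needs crude bounds only. For a root with $1-|X_1|\gg \log n/\sqrt n$, concentration makes $|P_n(w_1)|\ge 1$ extremely unlikely, exactly as in the proof of Theorem \ref{Basic result}.

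\textbf{Expansion of the critical value.} Write $Q_k(z)=\prod_{j\neq k}(z-X_j)$. A Taylor expansion at $X_k$ gives
\[
\log|P_n(w_k)| = \log|w_k-X_k| + \log|Q_k(X_k)| + \Re\Big((w_k-X_k)\tfrac{Q_k'}{Q_k}(X_k)\Big)+o(1) = -\log n + \sum_{j\ne k}\log|X_k-X_j| + O(1).
\]
Conditionally on $X_k=x$ with $|x|=1-s$, the sum $\sum_{j\ne k}\log|x-X_j|$ has mean $(n-1)(|x|^2-1)/2\approx -ns$. Its variance is $n\sigma^2(1+o(1))$, where
\[
\sigma^2=\Var\log|1-X| = \sum_{k\ge1}\frac{\E|X|^{2k}}{2k^2} = \frac12\sum_{k\ge1}\frac{1}{k^2(k+1)} = \frac{\zeta(2)-1}{2},
\]
computed from $\log|1-X| = -\Re\sum_k X^k/k$. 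By Berry--Esseen, applied uniformly in $x$ near the circle, the condition $|P_n(w_1)|\ge 1$ becomes $\sigma\sqrt n\,Z \ge ns + O(\log n)$ with $Z$ standard Gaussian. The $\log n$ shift moves $s$ by only $O(\log n/n)\ll 1/\sqrt n$.

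\textbf{Computing the constant.} Integrating over $|X_1|=1-s$, which has density $2(1-s)\,ds$, gives
\[
n\int_0^1 2(1-s)\,\mb{P}\big(\sigma Z \ge \sqrt n\, s\big)\,ds \sim 2n\,\E\Big[\frac{\sigma Z_+}{\sqrt n}\Big] = \frac{2\sigma\sqrt n}{\sqrt{2\pi}} = \sqrt n\,\sqrt{\frac{\zeta(2)-1}{\pi}},
\]
which is \eqref{mainresult}.

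\textbf{Main obstacle.} The hard step is making the pairing and the Taylor expansion quantitative and uniform for roots within $O(1/\sqrt n)$ of $\partial\D$, which is exactly the region that matters. There the local law $\frac1n\sum_j (X_k-X_j)^{-1}\approx \overline{X_k}$ has fluctuations of order $n^{-1/2}$, and nearby roots (at distance $\sim n^{-1/2}$ or closer) can spoil the expansion. What is needed is that the error in $\log|P_n(w_k)|$ is $o(\sqrt n)$ with probability $1-o(1)$ uniformly in such $k$. I would first try to prove this via a second-moment bound on $\sum_{j\ne k}(X_k-X_j)^{-1}$ that truncates close pairs. Close pairs at distance $<n^{-1/2-\delta}$ occur for $o(\sqrt n)$ of the boundary roots, so those roots can be discarded. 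The contribution of critical points with no nearby root would be bounded using the upper-bound argument of Theorem \ref{Basic result}.
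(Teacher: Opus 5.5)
Your route differs from the paper's. The paper uses the Kac--Rice identity with the $X_0\leftrightarrow X_1$ swap, which gives $\E[\tildea{\mathscr{C}}(\Lambda_n)]=1+\E[|1+R_n/S_n^2|^2\mathbbm{1}_{O_n}]$. It then evaluates this via the area formula and Edgeworth, and controls the remainders with Proposition \ref{dev-anti-conc}. You instead pair critical points with roots and read off the critical values.

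The lower-bound half of your argument is essentially fine. The gap is in the upper bound, i.e.\ in the step ``consequently $\E[\mathscr{C}(\Lambda_n)]=n\,\mb{P}(|P_n(w_1)|\ge1)+o(\sqrt n)$''. For it you must show that the expected number of critical points outside $\Lambda_n$ that are \emph{not} the partner $w_k$ of a well-behaved root is $o(\sqrt n)$. None of your tools gives this:
\begin{itemize}
\item \cite{Kabluchko-Seidel} describes the critical point next to one fixed root. It says nothing about whether every critical point in the boundary layer is such a partner.
\item The upper-bound argument of Theorem \ref{Basic result} only gives $O(\sqrt n)$, which is the order of the answer itself.
\item You control exceptional roots only inside the boundary layer, and that cannot rule out extra unpaired critical points there.
\end{itemize}
This is exactly the direction the paper identifies as its main difficulty. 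Conversely, the step you flag as the main obstacle is harmless. On a suitable good event the error in $\log|P_n(w_k)|$ is $O(1)$ deterministically, and roots at distance $\asymp n^{-1/2}$ play no role, since the displacement $w_k-X_k$ is $\asymp 1/n$.

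The missing idea is a global count: $P_n'$ has exactly $n-1$ zeros. With $Q_k=\prod_{j\ne k}(z-X_j)$ and $S_k=Q_k'/Q_k$, call $X_k$ good if all of the following hold:
\begin{itemize}
\item $|X_k|\ge n^{-1/2+\epsilon}$;
\item no other root lies within $K/(n|X_k|)$;
\item $|S_k(X_k)-(n-1)\overline{X_k}|\le\epsilon' n|X_k|$;
\item $\sum_{j\ne k}|X_k-X_j|^{-2}\le\eta n^2|X_k|^2$.
\end{itemize}
Rouché applied to $P_n'/Q_k=1+(z-X_k)S_k(z)$, compared with $1+(z-X_k)S_k(X_k)$, gives exactly one critical point $w_k$ in $B(X_k,C/(n|X_k|))$. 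For $K\ge 2C$ these discs are disjoint, so exactly $\#\{\text{bad }k\}-1$ critical points are unpaired.

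Conditional Chebyshev and Markov bounds, together with $\E[|X|^{-2}\mathbbm{1}_{|X|\ge n^{-1/2+\epsilon}}]\asymp\log n$, should give $\E\#\{\text{bad }k\}=O(n^{2\epsilon}+\log^2 n)=o(\sqrt n)$. Note that the bad set must be controlled over the whole disc, with isolation radius adapted to $1/(n|X_k|)$. A fixed scale such as $n^{-1/2-\delta}$ fails near the origin, because about $\sqrt n$ roots have $|X_k|<n^{-1/4}$.

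On the good event, $\log|P_n(w_k)|=\log|Q_k(X_k)|-\log n-\log|X_k|+O(1)$. Combined with Proposition \ref{location of the components of the lemniscate: lemma}, this sandwiches $\E[\mathscr{C}(\Lambda_n)]$ between
\[
n\,\mb{P}\big(|X_1|\ge 1-r_n,\ \log|Q_1(X_1)|\ge\log n+C_0\big)-o(\sqrt n)
\]
and
\[
n\,\mb{P}\big(|X_1|\ge1-2r_n,\ \log|Q_1(X_1)|\ge\log n-C_0\big)+o(\sqrt n).
\]
Berry--Esseen integrated over the annulus, together with Lemma \ref{gauss-comp}, then yields your constant.

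Completed this way, your argument bypasses Kac--Rice, the Edgeworth expansion and Proposition \ref{dev-anti-conc}, and appears to give a considerably smaller error term. The paper's approach, in exchange, gives an exact formula for the expected count without having to construct any pairing.
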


Some remarks are in order: 

\begin{enumerate}

    \item The limit appearing in Theorem \ref{main-thm} is dependent on the distribution of $X_1$ in that it is equal to a multiple of $\sqrt{Var(\log(|1-X_1|))}$. In Section \ref{some-comp-last} we explicitly compute $Var(\log(|1-X_1|))$ and it turns out to be $\frac{1}{2} \left(\zeta(2) - 1 \right)$.
 
    \item The $\sqrt{n}$ scaling manifests a phase transition in the behaviour of $\mb{E}[\mathscr{C}(\Lambda_n)]$. If $X_n$ are instead uniformly distributed on the disc $\mathbb{D}_r = \{z \hspace{1mm} \bigl| \hspace{1mm} |z| <r\}$ of radius $r>0$, then $\mb{E}[\mathscr{C}(\Lambda_n)]$ is $\Theta(1)$ for $r<1$, and $\Theta(n)$ for $r>1$, see \cite[Remark~$1.3$]{Ghosh-numberofcomponents} for a brief discussion on this.
    
    \item The convergence in \eqref{mainresult} is delicate to prove. We show that an error bound of $O(\frac{1}{(\log n)^\gamma})$ holds for any $\gamma>0$. It will be interesting to see if this can be improved to a $O(\frac{1}{n^\epsilon})$ bound. 

    \item Our proof crucially uses the assumption that the random variables $X_k$ have a density with respect to the Lebesgue measure. It is also 
    of considerable convenience that random variables $X_k$ are uniformly distributed. We believe that (a version of ) Theorem \ref{main-thm} will hold for more general random variables $X_k$ satisfying appropriate assumptions, but the nature of these assumptions seem to be complicated to write down.

    \item The polynomials $P_n(z)$ with i.i.d. roots can be considered as a toy model for the characterstic function of a random matrix sampled from the Ginibre ensemble. It is well known that the empirical distribution of the eigenvalues of such matrices converges to the uniform distribution on the unit disc (the circular law), see \cite{Tao-Vu-Randommatrices-universality-circularlaw}. With Theorem \ref{main-thm} established, it would be interesting (and natural) to ask how many components does the level set of the characterstic function of a Ginibre ensemble have?

\end{enumerate}

Before we explain the strategy involved in proving Theorem \ref{main-thm}, we briefly recall prior works and relevant literature in the study of random lemniscates and related areas. 

\vspace{0.1in}

\noindent Random polynomials with i.i.d. roots were first considered by Pemantle and Rivin \cite{PemantleRivin} to study a \textit{pairing phenomena} between roots and critical points. This phenomenon asserts that for such polynomials, each root has a nearby critical point. Hence if we have an understanding of the typical distance between a root and its nearest critical point, one can in principle use Taylor's theorem to estimate the corresponding critical value. In this way,  Lemma \ref{components=critical} below acts as a bridge connecting the pairing phenomenon to the study of lemniscate components. Pemantle and Rivin \cite{PemantleRivin} conjectured (and proved it for some special cases) that if roots are sampled as i.i.d. random variables, then the empirical distribution of the critical points of such polynomials converges weakly to the distribution of an individual zero. This was later proved by Kabluchko in full generality \cite{KabluchkoCriticalpointsofrandompolynomia}. A more quantitative version of this pairing phenomena was proven in \cite{Kabluchko-Seidel}: the distance between a critical point and the root nearest to it is of order $O(n^{-\alpha})$ for $1/2< \alpha <1$ while the typical spacing between i.i.d. roots is bigger at $O(n^{-\frac{1}{2}})$. Subsequent works established related results for the zeros of higher derivatives of random polynomials and stronger modes of convergence (see \cites{Angst-Jurgen-Poly-Almostsurebehaviorofthecriticalpointsofrandom, Byun-Sung-Soo-Lee-Reddy-their-higher-derivatives, Michelen-Xuan-Truong-Almostsurebehaviorofthezerosofiteratedderivativesof}).
The pairing phenomena can also be observed in the deterministic setting in the famous Sendov's conjecture, which is now essentially proved in the recent work of Terence Tao \cites{Tao-sendov-conjecture}. For more on this phenomena we refer the reader to the works \cites{HaninBorisPairingofzeros, O'RourkeNoahPairingbetweenzerosandcritical,  O'RourkeNoahOnthelocalpairingbehavior,  Kabluchko-Seidel} and references therein. \\

The number of connected components of random rational lemniscates for rational functions $p/q$, where \(p\) and \(q\) are sampled independently from the complex Kostlan ensemble was studied by Lerario and Lundberg \cite{lerario-lundberg-PLMS16}. They showed that the expected number of connected components grows linearly with the degree \(n\). Later, Kabluchko and Wigman \cite{Kabluchko-Wigman} obtained the exact asymptotics for nodal components in terms of the Nazarov--Sodin constant $c_{NS}$ \cite{Nazarov-Sodin}. For a sequence of Kac polynomials $p_n$ (with i.i.d complex Gaussian coefficients), it was shown by Lundberg and Ramachandran in \cite{lundberg-ramachandran-JLMS17} that $n^{-1}\E(\mathscr{C}_{p_n})\to\ 1$. Hence, in this model, the average number of components is asymptotically the maximal it can be. Another related work is \cite{GhoshRamachandran} where the first two named authors (S.G and K.R) used probabilistic methods to answer some (deterministic) questions of Erd\"{o}s on the maximal number of components when the zeros of the polynomial are constrained to lie on a given compact set. The study of certain other geometrical aspects of random polynomial lemniscates was investigated in \cite{KLR}. We now move on to give a rough sketch of the proof of Theorem \ref{main-thm}.


\subsection{Sketch of the proof of Theorem \ref{main-thm} and organization of the paper}\label{sketch}

It is a well known fact that the number of connected components of $\Lambda_n$ is one more than the number of critical points of $P_n$ which lie outside $\Lambda_n$, see Lemma \ref{components=critical}. By the Gauss-Lucas theorem, all the critical points of $P_n$ lie in $\mb{D}$. Hence, we have to count the number of critical points in $\mb{D}\setminus \Lambda_n$: denote this number as $L_n$. We assert that  $\mb{E}[L_n]\approx n  \mb{E}[Area(\mb{D}\setminus \Lambda_n)] $. A heuristic argument to support this claim can be given as follows. By \emph{the pairing phenomena} alluded to before, we can treat a critical point of $P_n$ as uniformly distributed on $\mb{D}$. Hence, pretending as if critical points and $\Lambda_n$ are independent of each other, this suggests that the probability that a critical point lies in $\mb{D} \setminus\Lambda_n $ is roughly $\frac{1}{\pi}Area(\mb{D} \setminus\Lambda_n)$. Since there are $n-1$ critical points, this suggests that $\mb{E}[L_n]$ has the same asymptotic behaviour as $n \times \mb{E}[Area(\mb{D}\setminus \Lambda_n)]$. Taking this assertion granted for now, we next claim that $\mb{E}[Area(\mb{D}\setminus \Lambda_n)] \sim  C/\sqrt{n}$, which in turn implies Theorem \ref{main-thm}. We verify this claim in two steps. Firstly, we obtain an a priori inradius bound of $\Lambda_n$ stating that $\Lambda_n$ contains a large ball with high probability, see Figure \ref{fig: location of lemniscates} and Proposition \ref{location of the components of the lemniscate: lemma}. This a priori estimate gives a $O(\sqrt{\log n /n})$ bound on $\mb{E}[Area(\mb{D}\setminus \Lambda_n)]$. Secondly, to get rid of the $\sqrt{\log n}$ factor and enhance this to the asymptotic $\mb{E}[Area(\mb{D}\setminus \Lambda_n)] \sim C/\sqrt{n}$, we write $\mb{E}[Area(\mb{D}\setminus \Lambda_n)]$ in the form of an integral representation and invoke the Edgeworth expansion theorem
(c.f. Theorem \ref{edgeworth expansion Theorem} below). 

The main difficulty that arises is to rigorously prove that $\E[L_n]\approx n \mb{E}[Area(\mb{D}\setminus \Lambda_n) $. We were unable to proceed along the lines of the heuristic stated above. Instead we use a Kac-Rice method to obtain an explicit formula for $\mb{E}[L_n]$, see Proposition \ref{Kac-Rice Formula} and Proposition \ref{key-prop}. This yields $\E[L_n] = nM_n + R_n$, where the term $M_n$ is approximately equal to $\mb{E}[Area(\mb{D}\setminus \Lambda_n)]$, and the remainder term $R_n$ is negligible, see Section \ref{analyse-the-kac-rice} for the details. The analysis of $R_n$ is quite delicate. Our key input, Proposition \ref{dev-anti-conc}, combines a large deviation estimate in the heavy tail regime and an anticoncentration/small ball inequality to handle $R_n$. This is done in Section \ref{section:A Large Deviated anticoncentration/small ball estimate} and Section \ref{last-analysis-of-remainder}. 

\subsection{Notations}\label{notations} 
We list below the notations that will be used throughout the paper.
\begin{itemize}
\item $f(n) \lesssim g(n)$ means there exists a constant $C>0$ such that $f(n) \le C g(n)$ for sufficiently large $n$.
\item We denote $f(n) = o(g(n))$ to mean $\lim_{n\to\infty} f(n)/g(n) = 0$, $f(n) = O(g(n))$ to mean $f(n) \lesssim g(n)$, and $f(n) = \Theta(g(n))$ if $f(n) = O(g(n))$ and $g(n) = O(f(n))$.
\item $\mb{E}[\cdots]$ refers to expected value w.r.t. to every random variable present in the expression $\cdots$, whereas $\mb{E}_{i}$ refers to expectation w.r.t. to $X_i$ 
conditional on $\{X_j\}_{j\neq i}$.

\item 
   $P_n(z) = (z-X_1)(z-X_2)...(z-X_n), \quad Q_n(z)= \frac{P_n(z)}{(z-X_1)}$.
   
\begin{align*}
     R_n(z) := \sum_{k=2}^n \frac{1}{(z-X_k)^2}, \hspace{2mm}S_n(z) := \sum_{k=2}^n \frac{1}{z-X_k}, \hspace{2mm}\tildea{S}_n(z) := \sum_{k=3}^n \frac{1}{z-X_k}. 
\end{align*}

\item For a uniform random variable $X_0$ independent of $X_1,X_2,...,X_n$, we write 
\begin{align*}
   R_n := R_n(X_0),  \textrm{\hspace{2mm}}  S_n := S_n(X_0)  \textrm{\hspace{2mm}and\hspace{2mm}} \tildea{S}_n := \tildea{S}_n(X_0). 
\end{align*}
\item Throughout the paper, we denote by C a positive numerical \emph{absolute} constant whose values may vary from line to line.
\end{itemize}

\section{Preliminaries}

\subsection{Connected components and critical points}
\begin{Lemma}\label{components=critical}
Let $\{\beta_j\}_{j=1}^{n-1}$ be the critical points of the random polynomial $P_n$. Then,
\begin{enumerate}[label=(\alph*)]
    \item $\mathscr{C}(\Lambda_n)= 1+ \left|\left\{ j: |P_n(\beta_j)| \geq 1\right \}\right|$. 
    
    \item All the critical points of $P_n$ are simple, i.e. $P_n''(\beta_j) \neq 0$ for all $j$ almost surely. Furthermore, $|P_n(\beta_j)| \neq 1 $ for all $j$  almost surely. In particular, 
    \begin{align*}
           \mathscr{C}(\Lambda_n)= 1+ \left|\left\{ j: |P_n(\beta_j)| > 1\right \}\right|. 
    \end{align*}
\end{enumerate}
\end{Lemma}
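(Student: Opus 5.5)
Part (a) is deterministic, so I would prove it for any polynomial $P$ of degree $n$ whose critical values satisfy $|P(\beta_j)|\neq 1$, and handle the degenerate case by the genericity in (b). The tool is a sublevel-set count. For $t>0$ set $\Lambda(t)=\{|P|<t\}$ and let $c(t)$ be its number of components. By the maximum principle every component of $\Lambda(t)$ is a bounded Jordan-type domain containing a zero of $P$. On such a component $U$, the map $P:U\to\{|w|<t\}$ is a proper holomorphic map, hence a branched covering of some degree $d_U$. By Riemann--Hurwitz, $\chi(U)=d_U\chi(\text{disc})-(\#\text{critical points in }U\text{ counted with multiplicity})$. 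Each component is simply connected: if $U$ had a hole, $P$ would have no zero in the bounded complementary region, contradicting the minimum modulus principle applied to $1/P$, since $|P|\ge t$ on its boundary. So $\chi(U)=1$, and $U$ contains exactly $d_U-1$ critical points of $P$.

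Summing over the components of $\Lambda_n=\Lambda(1)$, and using that $\sum_U d_U=n$ because all zeros lie in $\Lambda_n$, the number of critical points inside $\Lambda_n$ is $n-\mathscr{C}(\Lambda_n)$. There are $n-1$ critical points in total, counted with multiplicity. Hence the number outside $\Lambda_n$, i.e.\ those with $|P(\beta_j)|\ge1$, equals $\mathscr{C}(\Lambda_n)-1$, which is (a). Critical points on $\partial\Lambda_n$ are harmless here: they lie outside the open set $\Lambda_n$, and $P$ restricted to $U$ is still proper onto the open disc. The main subtlety is justifying properness and the simple connectivity of components, and I would argue both directly as above.

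For (b), I would condition on $X_2,\dots,X_n$ and view $X_1=x$ as the variable. The set of $x$ for which $P'$ and $P''$ share a root is the zero set of the resultant $\mathrm{Res}(P',P'')$, which is a polynomial in $(x,\bar{}\,)$-free variables, i.e.\ holomorphic polynomial in $x$ and the other roots. This resultant is not identically zero: for example, at distinct roots $0,1,\dots$ small perturbations give simple critical points, or one checks $z^n-1$. Hence its zero set in $\C^n$ is a proper algebraic subvariety, which has Lebesgue measure zero. Since the law of $(X_1,\dots,X_n)$ is absolutely continuous, critical points are almost surely simple.

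For $|P_n(\beta_j)|\neq1$, consider the critical values. The map $(x_1,\dots,x_n)\mapsto\prod_j P(\beta_j)$ equals, up to a constant, the resultant $\mathrm{Res}(P,P')$, i.e.\ the discriminant, which is a polynomial in the roots. However, I need each individual value $|P(\beta_j)|\neq1$. I would use translation/scaling: replacing all roots $x_k$ by $\lambda x_k$ multiplies every critical value by $\lambda^n$. Decompose the density in polar-like coordinates $(\lambda,\text{shape})$, where $\lambda>0$ is radial scaling. For a fixed shape, the at most $n-1$ critical values scale as $\lambda^n|P(\beta_j)|$, so the event $|P(\beta_j)|=1$ for some $j$ occurs for at most $n-1$ values of $\lambda$. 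This set is null in $\lambda$, and Fubini gives probability zero. I expect this to be the step needing most care, and the scaling argument is what I would try first. The final formula in (b) then follows from (a).
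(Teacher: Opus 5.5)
Your proof is correct, and it takes a different route from the paper in both parts.

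For (a), the paper gives no argument and only cites Lemma 2.8 of Ghosh's earlier work. Your Riemann--Hurwitz count is self-contained: each component $U$ is simply connected, $P\colon U\to\{|w|<1\}$ is proper of degree $d_U$, so $U$ carries exactly $d_U-1$ critical points, and $\sum_U d_U=n$. It handles multiplicities and critical points on $\partial\Lambda_n$ automatically, which is why (a) holds with $\geq$ for every polynomial.

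For (b), the paper conditions on $X_2,\dots,X_n$ and uses that $P_n'(z_0)=0$ is linear-fractional in $X_1$, namely $X_1=z_0+S_n(z_0)^{-1}$. A double critical point then forces $X_1$ into $f(S)$ with $S$ finite, and a critical value of modulus one forces $X_1$ into $f(Z)$ with $Z$ a proper real-algebraic set; both are null sets. You instead use two global facts:
\begin{itemize}
\item $\mathrm{Res}(P',P'')\not\equiv 0$ as a polynomial in the roots, which handles simplicity;
\item the homogeneity $P_{\lambda x}(\lambda z)=\lambda^n P_x(z)$, so on each ray of $\mathbb{C}^n$ at most $n-1$ radii are bad, and polar coordinates give a Lebesgue-null bad set.
\end{itemize}
The bad set is closed, hence measurable, because critical points stay in the convex hull of the roots; you should say this explicitly. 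Your dilation argument is shorter. It avoids checking that $Z$ is a proper algebraic set and that $f$ preserves null sets. The price is that it relies on joint absolute continuity and on the special scaling of critical values. The paper's one-root-at-a-time conditioning has the advantage of being the same device it reuses in the proof of Proposition~\ref{key-prop}.

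Two small repairs are needed.
\begin{itemize}
\item The test polynomial $z^n-1$ does not show $\mathrm{Res}(P',P'')\not\equiv0$ when $n\geq 3$, since $P'=nz^{n-1}$ has a single root of multiplicity $n-1$. Use instead $z^n-nz$, whose derivative $n(z^{n-1}-1)$ has simple roots. Since every monic polynomial has a root vector, one such example suffices.
\item In the simple-connectivity step, a hole $V$ of $U$ may contain other components of $\Lambda(t)$, and hence zeros of $P$. So ``no zero in $V$'' is not justified. Argue instead with the maximum modulus principle on the open set $W$ obtained by filling in the holes of $U$. Since $\partial W\subset\partial U$, where $|P|=t$, we get $|P|<t$ on $W$. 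Yet $\partial V\subset\partial U$ lies inside $W$, where $|P|=t$, a contradiction.
\end{itemize}
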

\begin{proof}
The claim $(a)$ is well known, see [Lemma $2.8$, \cite{Ghosh-numberofcomponents}] for details. For part $(b)$ suppose that $P_n'(z) = 0$ and $P_n''(z)=0$ has a common root $z=z_0$. Clearly, since roots of $P_n$ are all distinct almost surely, $z_0 \neq X_i$ for all $i$ and $P_n(z_0) \neq 0$. Using 
\[ P_n'(z) = P_n(z) \sum_{k=1}^n \frac{1}{z-X_k}, \textrm{ \hspace{2mm}and \hspace{2mm}} P_n'(z)^2 - P_n(z)P_n''(z)  = P_n(z)^2 \sum_{k=1}^n \frac{1}{(z-X_k)^2}, \]
it follows that for $z=z_0$, \[\sum_{k=1}^n \frac{1}{z_0-X_k} = 0, \textrm{ \hspace{2mm}and \hspace{2mm}} \sum_{k=1}^n \frac{1}{(z_0-X_k)^2} = 0.\]
This implies that 
\begin{equation}\label{X_1=f(S)}
    X_1 = z_0 + \biggl(\sum_{k=2}^n \frac{1}{z_0-X_k}\biggr)^{-1},
\end{equation} 
and \[\biggl(\sum_{k=2}^n \frac{1}{z_0-X_k}\biggr)^2 + \sum_{k=2}^n \frac{1}{(z_0-X_k)^2} = 0. \]
The set of $z_0$ satisfying the above relation forms a finite subset in $\sigma(X_2, X_3,\cdots, X_n)$, call it the set $S$. Then, using \eqref{X_1=f(S)}, we obtain that $X_1 \in f(S)$ where $f(z) = z + \bigl(\sum_{k=2}^n \frac{1}{z-X_k}\bigr)^{-1}$. Since $X_k$ are independent of each other, by first conditioning w.r.t. $X_2, X_3,...,X_n$ and then integrating against the law of $(X_2, X_3,...,X_n)$, we have $\mb{P}[X_1 \in f(S)] =0$ which proves the claim that $P_n'$ and $P_n''$ has no common root almost surely. \\

Similarly, if there exists $z=z_0$ with $P_n'(z_0) =0 $ and $|P_n(z_0)| =1 $, we would obtain \eqref{X_1=f(S)} and $|z_0-X_2||z_0 -X_3|...|z_0-X_n| = |z_0-X_1|^{-1}$. This, in turn, implies that $z_0$ solves the equation 
\[ \bigl|\sum_{k=2}^n \frac{1}{z_0 - X_k}\bigr| =|z_0-X_2||z_0 -X_3|...|z_0-X_n|. \]
This implies that $z_0$ lies in the zero set of a non-constant multivariate polynomial measurable with respect to $\sigma(X_2, X_3,..., X_n)$, call this zero set $Z$. Note that the zero set of a non-constant multivariate polynomial has Lebesgue measure zero. Hence, with $f$ as above, $f(Z)$ has Lebesgue measure zero and $\mb{P}[X_1 \in f(Z)] =0$ which finishes the proof.  
\end{proof}


\subsection{Probabilistic preliminaries}
\begin{thm}\label{edgeworth expansion Theorem}\textbf{(Edgeworth expansion)}\cite[Theorem $1$-Chapter $6$]{Petrov-sumsofindipendentrandomvariables}
    Let $\{Y_i\}_{i \in \N}$ be a sequence of i.i.d. random variables with the common distribution function $F$. Assume that 
    \begin{align*}
        \mb{E}[Y_1]=0, \quad \mb{E}[Y_1^2]= \sigma^2 <\infty,  \quad \textrm{and} \quad  \mb{E}[|Y_1|^3] < \infty.
    \end{align*}
    Let $S_n := \sum_{j=1}^nY_j$ and $\nu(t):=\mb{E}[e^{itY_1}]$ be the characteristic function  of $Y_1$. Then, for $F_n(x):=\mb{P}\left(S_n/\sigma\sqrt{n} \leq x\right)$ and distribution function $\Phi$ of the standard normal random variable, there exists a universal constant $C$ such that for all $x\in \mb{R}$ and $n\geq 1$, 
     \begin{multline}\label{edgeworth expansion}
        \left| F_n(x)- \Phi(x)- \frac{Q_1(x)}{\sqrt{n}}\right| \leq C \Bigg\{ \frac{1}{\sigma^3 \sqrt{n}} \frac{1}{(1+|x|)^3} \int_{|y|\geq \sigma\sqrt{n}(1+|x|)} |y|^3 dF(y)\\[1.5em]
          \hphantom{xxxxxxxxxxx} +\frac{1}{\sigma^4{n}}  \frac{1}{(1+|x|)^4} \int_{|y|< \sigma\sqrt{n}(1+|x|)}  |y|^4 dF(y) \\[1em]
         + \left( \sup_{|t|\geq \delta} |\nu(t)|+\frac{1}{2n}\right)^n  \frac{n^6}{(1+|x|)^4}\Bigg\},
    \end{multline}  
    
    where $Q_1(x) = - \frac{\gamma_3}{6\sigma^3\sqrt{2\pi}} (x^2-1) e^{-x^2/2}$, $\gamma_3=\mb{E}[Y_1^3]$ and $\delta= \frac{\sigma^2}{12 \mb{E}(|Y_1|^3)}$.
\end{thm}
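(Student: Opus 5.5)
Since this is the classical nonuniform Edgeworth estimate quoted from Petrov, I would not reprove it from scratch in the paper. Still, the proof I would follow has three steps: truncation at a level depending on $x$, a smoothing inequality, and a comparison of characteristic functions. Normalise $\sigma=1$ by replacing $Y_i$ with $Y_i/\sigma$; this produces the powers of $\sigma$ on the right. First treat $|x|\le 1$, where the weights $(1+|x|)^{-k}$ are harmless. Then treat $|x|>1$ by a truncation argument that produces the nonuniform weights.

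\textbf{Uniform part.} Apply Esseen's smoothing inequality to $G(x)=\Phi(x)+Q_1(x)/\sqrt n$. Its derivative is bounded and its Fourier--Stieltjes transform is $e^{-t^2/2}\bigl(1+\tfrac{\gamma_3 (it)^3}{6\sqrt n}\bigr)$. This gives
\[
|F_n(x)-G(x)|\le C\int_{-T}^{T}\Bigl|\frac{\nu(t/\sqrt n)^n-e^{-t^2/2}\bigl(1+\tfrac{\gamma_3(it)^3}{6\sqrt n}\bigr)}{t}\Bigr|\,dt+\frac{C}{T},
\]
with $T$ a large power of $n$, e.g. $T=n^{5}$. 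Split the integral at $|t|=\delta\sqrt n$.

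For $|t|\le \delta\sqrt n$, expand $\log\nu(s)$ to third order and put the remainder in truncated form: split $\E[e^{isY}]$ into $|Y|<\sqrt n$ and $|Y|\ge\sqrt n$. On the small part, the fourth-order Taylor remainder is controlled by $\int_{|y|<\sqrt n}y^4\,dF$. On the large part, the third-order remainder is controlled by $\int_{|y|\ge\sqrt n}|y|^3\,dF$. The choice $\delta=1/(12\E|Y|^3)$ guarantees $|\nu(t/\sqrt n)|\le e^{-t^2/3}$ there. Exponentiating and integrating against $e^{-t^2/4}$ then gives exactly the first two error terms, each divided by $\sqrt n$ or $n$ as in \eqref{edgeworth expansion}.

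For $\delta\sqrt n\le|t|\le T$, bound $|\nu(t/\sqrt n)|^n\le(\sup_{|s|\ge\delta}|\nu(s)|)^n$. The Gaussian terms are exponentially small. Integrating $1/|t|$ up to $T$ costs at most a power of $n$, which is absorbed into $n^6$. The additive $1/(2n)$ inside $(\sup|\nu|+1/(2n))^n$ makes this term dominate $C/T$ and the Gaussian tails. That is the role of $1/(2n)$: the bound stays meaningful even when $\sup|\nu|$ is close to $1$.

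\textbf{Nonuniform part.} For $|x|\ge1$, set $u=\sqrt n(1+|x|)$ and truncate $\bar Y_j=Y_j\mathbf 1_{\{|Y_j|<u\}}$. The event that some $Y_j$ is truncated has probability at most
\[
n\,\mathbb P(|Y|\ge u)\le \frac{n}{u^3}\int_{|y|\ge u}|y|^3\,dF=\frac{1}{\sqrt n\,(1+|x|)^3}\int_{|y|\ge u}|y|^3\,dF,
\]
which is the first error term. For the truncated sum, centre and rescale it; the mean shift and variance change cost only terms of the same two types. Then run the smoothing argument with a tilted weight, in the style of Bikelis and Petrov. Concretely, use the bound
\[
|\bar F_n(x)-G(x)|\le \frac{\E|\bar S_n/\sqrt n|^4+C}{(1+|x|)^4},
\]
for $|x|$ large, and smooth the function $(1+|x|)^4(\bar F_n-G)$ at moderate $|x|$. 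Here $\E\bar S_n^4/n^2$ involves $n^{-1}\int_{|y|<u}y^4\,dF$, which produces the second term with weight $(1+|x|)^{-4}$. The same weight is carried along in the lattice/large-$t$ piece, giving the factor $n^6/(1+|x|)^4$.

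\textbf{Main obstacle.} The delicate step is the nonuniform bookkeeping. The truncation level must depend on $x$, and the Fourier-side estimate must keep the $(1+|x|)^{-4}$ weight without losing the sharp truncated-moment form of the errors. I would handle it by differentiating the characteristic-function difference up to four times in $t$, which corresponds to multiplying by $x^4$ on the distribution side. Each derivative is then estimated with the same near-origin and large-$t$ splitting as in the uniform part.
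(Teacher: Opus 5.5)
The paper does not prove this statement. It quotes it from Petrov, and Proposition~\ref{edgeworth-prop} uses only its $x$-independent consequences: the weights $(1+|x|)^{-k}$ are discarded to get $E_1(r,n),E_2(r,n)=O(1/n)$. So citing it is exactly what the paper does. The architecture you sketch (truncation at $\sqrt n(1+|x|)$, smoothing, comparison of characteristic functions) is the standard one, but two steps fail as written.

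\textbf{The role of $\frac{1}{2n}$ and the $C/T$ term.} You misidentify the role of $\frac{1}{2n}$, and with it the treatment of $C/T$. The third error term can be exponentially small: if $\sup_{|t|\ge\delta}|\nu(t)|\le1-\eta$ it is at most $e^{1/2-\eta n}n^6$, which is precisely the situation in the paper's application. So it cannot absorb $C/T=Cn^{-5}$, and that term and the Gaussian tail over $|t|\ge\delta\sqrt n$ must go into the moment terms instead. With $\sigma=1$, write $A=\frac{1}{\sqrt n}\int_{|y|\ge\sqrt n}|y|^3\,dF$ and $B=\frac1n\int_{|y|<\sqrt n}y^4\,dF$ for the first two terms at $x=0$.
\begin{itemize}
\item $A+B\ge\frac{1}{4n}$ always. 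Either $\int_{|y|<\sqrt n}y^2\,dF\ge\frac12$, giving $B\ge\frac1{4n}$, or else $A\ge\frac12$.
\item The Gaussian tail is $O\bigl((\mathbb{E}|Y_1|^3)^2/n\bigr)$ because $\delta^{-1}=12\,\mathbb{E}|Y_1|^3$. By Cauchy--Schwarz, $(\mathbb{E}|Y_1|^3)^2/n\le 2A^2+2B$.
\end{itemize}
The $\frac1{2n}$ actually comes from the truncation. For $\bar\nu(t)=\mathbb{E}\,e^{itY_1\mathbf{1}\{|Y_1|<u\}}$ one has $|\bar\nu(t)-\nu(t)|\le2\,\mathbb{P}(|Y_1|\ge u)\le\frac{2}{n(1+|x|)^2}\le\frac{1}{2n}$ for $|x|\ge1$. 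Hence $\sup_{|t|\ge\delta}|\bar\nu(t)|\le\sup_{|t|\ge\delta}|\nu(t)|+\frac1{2n}$. Your nonuniform part runs the Fourier argument for the truncated sum but never compares $\bar\nu$ with $\nu$ in the large-$t$ region. That comparison is a missing step, and it is where the $\frac1{2n}$ enters.

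\textbf{The Chebyshev step for large $|x|$ cannot work.} For centred truncated summands, $\mathbb{E}(\bar S_n/\sqrt n)^4=3\bar\sigma^4(1-\frac1n)+\frac1n\mathbb{E}\bar Y_1^4$. So your bound is of order $(1+|x|)^{-4}$ with no factor $n^{-1}$. The truncated fourth moment is only a lower-order correction there; it does not ``produce the second term''. For bounded $Y_1$ with a smooth density and large $n$, the right-hand side of the theorem is $\asymp n^{-1}(1+|x|)^{-4}$ for every $x$, so no range of $x$ can be settled this way. Large $|x|$ needs an estimate that keeps the $n^{-1}$ gain. One option is to exploit the cancellation $\int y^4\,d(\bar F_n-G)(y)=O\bigl(\frac1n\int_{|y|<u}y^4\,dF+\cdots\bigr)$, which is what your fourth-derivative matching at $t=0$ is meant to capture. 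Another, for $|x|\gtrsim\sqrt{\log n}$, is a Fuk--Nagaev-type inequality for the truncated sum. Its Gaussian part $e^{-cx^2}$ is then $O\bigl(n^{-1}(1+|x|)^{-4}\bigr)$, and its remaining parts are bounded by the moment terms.

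For the paper itself none of the nonuniform machinery is needed. The unweighted form of your uniform argument, valid for all $x$ and with the $C/T$ bookkeeping repaired, already gives everything Proposition~\ref{edgeworth-prop} uses.
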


\begin{thm}\label{bobkov et al Theorem}\cite[Theorem 1]{Bobkov-etal}
    Let $X$ be a random variable with density $p$ and variance 
$\sigma^{2} = \mathrm{Var}(X)< \infty$. Assume that $p(x) \leq M$, for all $x\in \mb{R}$ almost everywhere. Then, for the characterstic function $\nu(t) = \mathbb{E}\, e^{itX}$ of $X$, there exists universal constants $C_1, C_2 >0$ such that 

\begin{enumerate}
    \item For all $ |t| \,\geq\, \tfrac{\pi}{4\sigma}$,
    \begin{equation}\label{bobkov eq:bound1}
        |\nu(t)| \;<\; 1 - \frac{C_{1}}{M^{2}\sigma^{2}}.
    \end{equation}

    \item For $0 <  |t| < \tfrac{\pi}{4\sigma}$,
    \begin{equation}\label{bobkov eq:bound2}
        |\nu(t)|\;<\; 1 - \frac{C_{2}t^{2} }{M^{2}} .
    \end{equation}
\end{enumerate}
\end{thm}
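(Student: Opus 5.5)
The plan is to reduce to a symmetric random variable and bound $1-|\nu(t)|^2$ from below by $1-\cos$ averages. Let $X'$ be an independent copy of $X$ and put $Z = X - X'$. Then $|\nu(t)|^2 = \mb{E}[e^{itZ}]$ is real. $Z$ has density $q = p * p(-\cdot)$, which still satisfies $q \le M$, and $\mathrm{Var}(Z) = 2\sigma^2$. Hence
\begin{equation*}
1 - |\nu(t)|^2 = \int_{\mb{R}} \bigl(1-\cos(tx)\bigr)\, q(x)\,dx .
\end{equation*}
Once we show $1-|\nu(t)|^2 \ge a$, we get $|\nu(t)| \le \sqrt{1-a} \le 1 - a/2$. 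So it suffices to bound this integral below by $c/(M^2\sigma^2)$ in case (1) and by $c\,t^2/M^2$ in case (2). We use two inputs throughout. First, Chebyshev gives $\mb{P}(|Z| > 4\sigma) \le 2\sigma^2/(16\sigma^2) = 1/8$. Second, $q\le M$ means $Z$ puts mass at most $M|A|$ on any set $A$.

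For case (2), $0<|t|<\pi/(4\sigma)$: on $|x|\le 4\sigma$ we have $|tx| \le \pi$, and there $1-\cos(tx) \ge (2/\pi^2)\, t^2x^2$. The mass of $Z$ on $\{|x| < 1/(8M)\}$ is at most $1/4$. Therefore the set $\{1/(8M) \le |x| \le 4\sigma\}$ carries mass at least $1 - 1/8 - 1/4 = 5/8$, and on it $x^2 \ge 1/(64M^2)$. This gives $1-|\nu(t)|^2 \ge c\,t^2/M^2$, which is \eqref{bobkov eq:bound2}.

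For case (1), $|t| \ge \pi/(4\sigma)$: fix a small $\epsilon>0$ and let $B_\epsilon = \{x : 1-\cos(tx) < \epsilon^2\}$. Up to constants, $B_\epsilon$ is the union of intervals centred at $2\pi k/t$, each of length $\lesssim \epsilon/|t|$. Only about $4\sigma|t|/\pi + 1$ of these intervals meet $[-4\sigma,4\sigma]$, so
\begin{equation*}
\bigl|B_\epsilon \cap [-4\sigma,4\sigma]\bigr| \lesssim \epsilon\Bigl(\sigma + \tfrac{1}{|t|}\Bigr) \lesssim \epsilon\,\sigma ,
\end{equation*}
where the last step uses $1/|t| \le 4\sigma/\pi$. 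Consequently $\mb{P}\bigl(Z \in B_\epsilon \cap [-4\sigma,4\sigma]\bigr) \lesssim M\sigma\epsilon$. Choosing $\epsilon = c_0/(M\sigma)$ with $c_0$ small makes this at most $1/4$. Then $Z$ lies in $[-4\sigma,4\sigma]\setminus B_\epsilon$ with probability at least $5/8$, and on that set $1-\cos(tZ)\ge\epsilon^2$. This yields $1-|\nu(t)|^2 \ge \tfrac58 \epsilon^2 \gtrsim 1/(M^2\sigma^2)$, which is \eqref{bobkov eq:bound1}.

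For this choice of $\epsilon$ to be legitimate we need $\epsilon \le 1$, and also $C_1/(M^2\sigma^2)<1$ so the bound is nontrivial. Both follow from the a priori inequality $M^2\sigma^2 \ge 1/12$. To prove it, note that a density bounded by $M$ has variance at least that of the uniform law on an interval of length $1/M$; this holds because, among such densities, the variance is minimized by concentrating the mass as tightly as possible around the mean. The main bookkeeping obstacle is this counting of the near-resonant intervals in case (1), in particular making the $1/|t|$ term absorbable; that is exactly where the threshold $\pi/(4\sigma)$ enters.
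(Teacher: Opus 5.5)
The paper gives no proof of this statement. It quotes the result from Bobkov--Chistyakov--G\"otze and uses it only as a black box, to get $\sup_{1/2\le r\le 1}\sup_{|t|>\delta}|\nu_r(t)|\le 1-\eta$ for $\log|r-X_1|$. So there is no internal argument to compare yours with.

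Your proposal is a correct, self-contained proof.
\begin{itemize}
\item Symmetrizing to $Z=X-X'$ keeps the bound $q\le M$ and gives $\mathrm{Var}(Z)=2\sigma^2$.
\item In case (2), use $1-\cos u\ge 2u^2/\pi^2$ on $|u|\le\pi$. Discard the window $|x|<1/(8M)$ (mass at most $1/4$) and the Chebyshev tail (mass at most $1/8$). This gives $1-|\nu(t)|^2\ge c\,t^2/M^2$.
\item In case (1), the set $B_\epsilon\cap[-4\sigma,4\sigma]$ is covered by $O(\sigma|t|+1)$ intervals, each of length $O(\epsilon/|t|)$. The condition $|t|\ge\pi/(4\sigma)$ absorbs the $+1$. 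Taking $\epsilon=c_0/(M\sigma)$ then yields $1-|\nu(t)|^2\gtrsim 1/(M^2\sigma^2)$.
\end{itemize}
The strict inequalities in the statement follow by shrinking your constants, because your lower bounds on $1-|\nu(t)|^2$ are strictly positive.

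The one step to tighten is the a priori bound on $M\sigma$. Your argument for $M^2\sigma^2\ge 1/12$ (``concentrate the mass as tightly as possible around the mean'') is informal, since any rearrangement moves the mean. It becomes rigorous if you apply the bathtub principle to $\int(x-a)^2p(x)\,dx$ for each fixed $a$ and then minimize over $a$.

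You do not actually need it, though. Your own two inputs give $\tfrac78\le\mathbb{P}(|Z|\le 4\sigma)\le 8M\sigma$, that is, $M\sigma\ge 7/64$. This is all you need to guarantee $\epsilon\le 1$ once $c_0\le 7/64$.

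Compared with the bare citation, your argument shows that only the density bound and the variance enter, through Chebyshev. That is exactly the uniformity in $r$ the paper needs when it applies the theorem to $\log|r-X_1|$.
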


\begin{thm}[Bennett's inequality]\cite[Theorem 2.9]{concentration-inequalities-BLMbook}\label{Bennett's inequality}
Let $Y_1,Y_2,...,Y_n$ be independent random variables with finite variances such that  $\forall$ $ i \leq n$, $Y_i \leq b$, for some $b > 0$ almost surely. Let
            \begin{align*}
                S=\sum_{i=1}^n\left(Y_i-\mathbb{E}[Y_i]\right),
            \end{align*}
        and $\nu = \sum_{i=1}^n\mathbb{E}[{Y_i}^2] .$ Then for any $t >0,$ we have
            \begin{align*}
                \mathbb{P}(S > t)\leq \exp\left(-\frac{\nu}{b^2}h\Big(\frac{bt}{\nu}\Big)\right),
            \end{align*}
        where $h(u) = (1 + u) \log(1 + u)- u \textit{, for } u > 0.$
    \end{thm}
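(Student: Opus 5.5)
The plan is the standard Cramér--Chernoff argument: bound the moment generating function of each summand using only the one-sided bound $Y_i\le b$ and the second moment, then optimize the exponential parameter. Fix $\lambda>0$. By Markov's inequality and independence,
\begin{align*}
\mathbb{P}(S>t)\le e^{-\lambda t}\,\mathbb{E}\big[e^{\lambda S}\big]=e^{-\lambda t}\prod_{i=1}^n e^{-\lambda\mathbb{E}[Y_i]}\,\mathbb{E}\big[e^{\lambda Y_i}\big].
\end{align*}
So it suffices to prove that for each $i$,
\begin{align*}
\log \mathbb{E}\big[e^{\lambda (Y_i-\mathbb{E}[Y_i])}\big]\le \frac{\mathbb{E}[Y_i^2]}{b^2}\,\phi(\lambda b),\qquad \phi(u):=e^u-u-1 .
\end{align*}
Summing over $i$ then gives $\log\mathbb{E}[e^{\lambda S}]\le \frac{\nu}{b^2}\phi(\lambda b)$.

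The key pointwise inequality is
\begin{align*}
e^{\lambda y}\le 1+\lambda y+\frac{y^2}{b^2}\,\phi(\lambda b)\qquad\text{for all } y\le b.
\end{align*}
It follows once we know that $u\mapsto \phi(u)/u^2$ (extended by $1/2$ at $u=0$) is nondecreasing on all of $\mathbb{R}$. Indeed, $\lambda y\le\lambda b$ then gives $\phi(\lambda y)\le (\lambda y)^2\phi(\lambda b)/(\lambda b)^2$. This holds also for negative $y$, which is why only an upper bound on $Y_i$ is needed.

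For the monotonicity, write $\phi(u)/u^2=\int_0^1(1-s)e^{su}\,ds$, which is visibly increasing in $u$ since each integrand $e^{su}$ is. Taking expectations in the pointwise inequality gives
\begin{align*}
\mathbb{E}[e^{\lambda Y_i}]\le 1+\lambda\mathbb{E}[Y_i]+\mathbb{E}[Y_i^2]\,\phi(\lambda b)/b^2 .
\end{align*}
Applying $\log(1+x)\le x$ and subtracting $\lambda\mathbb{E}[Y_i]$ yields the claimed per-summand bound.

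Finally, we get $\mathbb{P}(S>t)\le\exp\big(-\lambda t+\frac{\nu}{b^2}\phi(\lambda b)\big)$ and minimize over $\lambda>0$. Setting the derivative to zero gives $e^{\lambda b}=1+bt/\nu$, that is, $\lambda=\frac1b\log(1+\frac{bt}{\nu})$. Substituting with $u=bt/\nu$, the exponent becomes
\begin{align*}
-\frac{\nu}{b^2}\big(u\log(1+u)-u+\log(1+u)\big)=-\frac{\nu}{b^2}h(u),
\end{align*}
which is the stated bound. The only non-routine step is the monotonicity of $\phi(u)/u^2$ over negative as well as positive $u$; the integral representation above disposes of it. Everything else is bookkeeping. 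If $\nu=0$, the $Y_i$ vanish almost surely and the bound is trivial, so we may assume $\nu>0$.
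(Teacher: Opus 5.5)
Your proposal is correct and is the standard Cram\'er--Chernoff proof of Bennett's inequality: the pointwise bound $e^{\lambda y}\le 1+\lambda y+y^2\phi(\lambda b)/b^2$ for $y\le b$ from the monotonicity of $\phi(u)/u^2$, then $\log(1+x)\le x$, and optimizing at $\lambda=\frac1b\log(1+bt/\nu)$. This is essentially the proof of \cite[Theorem 2.9]{concentration-inequalities-BLMbook}, which the paper cites without reproducing; your integral representation $\phi(u)/u^2=\int_0^1(1-s)e^{su}\,ds$ settles the monotonicity cleanly, and working with second moments rather than variances is exactly why the one-sided bound $Y_i\le b$ suffices.
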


   \begin{Lemma}[von Bahr-Esseen inequality]
\cite[Page 9]{Kabluchko-Seidel}, \cite{vonBahr-Esseen} \label{BE-ineq}
   Let $Z_1, Z_2, \dots, Z_n$ be independent, mean-zero random variables. 
Then, for any $1 \leq p \leq 2$, 
\[
    \mathbb{E}\left| \sum_{i=1}^n X_i \right|^p 
    \leq 2 \sum_{i=1}^n \mathbb{E}|X_i|^p .
\] 
\end{Lemma}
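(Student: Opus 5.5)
The statement should be read with $X_i$ replaced by $Z_i$. I would prove it by symmetrization combined with a Fourier (Lévy–Khintchine type) integral representation of $|x|^p$. This route gives the constant $2$ directly, whereas a crude symmetrization plus $C_r$-inequality only gives $2^p$.

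\emph{Case $p=2$.} This is immediate: by independence and the mean-zero assumption the cross terms vanish, so $\mathbb{E}|\sum Z_i|^2=\sum\mathbb{E}|Z_i|^2\le 2\sum \mathbb{E}|Z_i|^2$. So assume $1\le p<2$. We may also assume $\mathbb{E}|Z_i|^p<\infty$ for all $i$, otherwise there is nothing to prove.

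\emph{Integral representation.} For real $x$ and $0<p<2$,
\[
|x|^p=c_p\int_{\mathbb{R}}\frac{1-\cos(tx)}{|t|^{1+p}}\,dt,\qquad c_p>0,
\]
which follows by scaling $t\mapsto t/|x|$. If the $Z_i$ are complex valued, as in the later application, I would use the analogue on $\mathbb{R}^2$:
\[
|x|^p=c_p'\int_{\mathbb{R}^2}\frac{1-\cos\langle t,x\rangle}{|t|^{2+p}}\,dt.
\]
The argument below is identical in both settings. Taking expectations and using Tonelli (the integrand is nonnegative), for any random variable $W$ with characteristic function $\varphi_W$ we get $\mathbb{E}|W|^p=c_p\int (1-\Re\varphi_W(t))|t|^{-1-p}\,dt$.

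\emph{Step 1 (symmetrization).} Let $S=\sum Z_i$, and let $S'=\sum Z_i'$ be an independent copy. Since $\mathbb{E}[S'\mid S]=0$, conditional Jensen gives $\mathbb{E}|S|^p=\mathbb{E}\bigl|\mathbb{E}[S-S'\mid S]\bigr|^p\le \mathbb{E}|S-S'|^p$. The characteristic function of $S-S'$ is $\prod_k|\varphi_k(t)|^2$, where $\varphi_k$ is that of $Z_k$. Hence
\[
\mathbb{E}|S|^p\le c_p\int\frac{1-\prod_k|\varphi_k(t)|^2}{|t|^{1+p}}\,dt.
\]

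\emph{Step 2 (two elementary inequalities).} First, for $a_k\in[0,1]$ we have $1-\prod_k a_k\le\sum_k(1-a_k)$, by induction on the number of factors. Applying this with $a_k=|\varphi_k|^2$ bounds the integrand by $\sum_k(1-|\varphi_k(t)|^2)|t|^{-1-p}$. Second, for any complex $\varphi$ with $|\varphi|\le1$,
\[
1-|\varphi|^2\le 2(1-\Re\varphi),
\]
because the difference equals $|1-\varphi|^2\ge0$. This is precisely where the sharp constant $2$ comes from.

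Combining the two steps,
\[
\mathbb{E}|S|^p\le\sum_k 2c_p\int\frac{1-\Re\varphi_k(t)}{|t|^{1+p}}\,dt=2\sum_k\mathbb{E}|Z_k|^p.
\]

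The only delicate points are the justification of the integral representation (its convergence at $t=0$ and at $\infty$ for $0<p<2$) and the interchanges of expectation and integral. Both are handled by nonnegativity and Tonelli, so I expect no real obstacle. The main idea is the identity $1-|\varphi|^2\le 2(1-\Re\varphi)$, which avoids the lossy constant $2^{p-1}$.
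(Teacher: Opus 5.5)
The paper gives no proof of this lemma; it simply cites von Bahr--Esseen and Kabluchko--Seidel. You are right that the $X_i$ in the display should be $Z_i$.

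Your argument is a correct, self-contained proof:
\begin{itemize}
\item The symmetrization step uses exactly the two hypotheses: $p\ge 1$ for conditional Jensen, and mean zero (hence integrability of $S'$) for $\mathbb{E}[S'\mid S]=0$.
\item The product inequality $1-\prod_k a_k\le\sum_k(1-a_k)$ applies because $|\varphi_k(t)|^2\in[0,1]$.
\item The identity $2(1-\Re\varphi)-(1-|\varphi|^2)=|1-\varphi|^2$ produces the constant $2$. This identity does not even need $|\varphi|\le 1$.
\item Every interchange of expectation and integral is justified by Tonelli, since all integrands are nonnegative.
\item The representation of $|x|^p$ converges precisely for $0<p<2$, and you treat $p=2$ separately.
\end{itemize}

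Compared with citing the result, your route buys two things.

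First, the $\mathbb{R}^2$ version of the Fourier representation handles complex-valued summands with the same constant $2$. This is exactly how the lemma is used in \eqref{use-of-BDG}, where the increments $\frac{1}{X_0-X_k}-\overline{X_0}$ are complex. The classical statement is usually formulated for real summands. Otherwise one would apply it to real and imaginary parts separately and lose a constant, which is harmless for the paper's $\lesssim$ estimates.

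Second, your proof isolates two sharper facts. One is the symmetric version with constant $1$, namely $\mathbb{E}|S-S'|^p\le\sum_k\mathbb{E}|Z_k-Z_k'|^p$. The other is $\mathbb{E}|Z-Z'|^p\le 2\,\mathbb{E}|Z|^p$ for $0<p<2$ and arbitrary $Z$; this is where the naive bound $2^p$ is improved to $2$.

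What your proof gives up is generality. The factorization of the characteristic function of $S-S'$ uses independence essentially, so the argument does not reach the dependent settings (e.g.\ martingale differences) covered by the cited inequality. The paper never needs that generality: both applications, in Section~\ref{section:A Large Deviated anticoncentration/small ball estimate} and in \eqref{use-of-BDG}, involve sums of independent summands (conditionally on $X_0$ in the latter).
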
 

\begin{Lemma} \cite[Lemma~2.4]{Ghosh-numberofcomponents}\label{lemma: p<2 moment bound for uniform on disk}
     Let $X$ be a uniform random variable on the open unit disk $\mathbb{D}$. For $ p<2 $, there exists a constant $C_p$ such that for all $ z \in \mathbb{D}$,
    \begin{align}
            \mb{E}\left[\frac{1}{|z-X|^p}\right] \leq C_p.
        \end{align}
        Furthermore, 
        \begin{equation}\label{mean of 1/z-X}
             \mb{E}\left[\frac{1}{z-X}\right]  = \overline{z}.
        \end{equation}
\end{Lemma}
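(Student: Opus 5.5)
The plan is to write the expectation as a normalized area integral, $\mb{E}[g(X)] = \frac{1}{\pi}\int_{\D} g(w)\, dA(w)$, and handle the two claims separately. The bound is a direct enlargement of the domain of integration. The mean formula follows by integrating in polar coordinates centred at the origin and averaging over circles.

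\emph{Moment bound.} Fix $z\in\D$. Since $|z-w|<2$ for $w\in\D$, we have $\D\subset\{w:|w-z|<2\}$. Substituting $u=w-z$ and then passing to polar coordinates,
\begin{align*}
\mb{E}\left[\frac{1}{|z-X|^p}\right] \le \frac{1}{\pi}\int_{|u|<2}\frac{dA(u)}{|u|^p} = 2\int_0^2 r^{1-p}\,dr = \frac{2^{3-p}}{2-p}.
\end{align*}
This is finite because $p<2$, and it does not depend on $z$, so we may take $C_p = 2^{3-p}/(2-p)$. (For $p\le 0$ the integrand is bounded and the claim is trivial.)

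\emph{Mean formula.} By the case $p=1$ the integrand $1/(z-w)$ is absolutely integrable, so Fubini lets us write
\begin{align*}
\frac{1}{\pi}\int_{\D}\frac{dA(w)}{z-w} = \frac{1}{\pi}\int_0^1 \left(\int_0^{2\pi}\frac{d\theta}{z-re^{i\theta}}\right) r\,dr .
\end{align*}
The single radius $r=|z|$ has measure zero and can be ignored. We then evaluate the inner circle integral in two cases.
\begin{itemize}
\item For $r<|z|$, expand $\frac{1}{z-re^{i\theta}}=\sum_{k\ge0} r^k e^{ik\theta}/z^{k+1}$. This series converges uniformly in $\theta$, so it may be integrated termwise, and only the $k=0$ term survives. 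The inner integral is $2\pi/z$.
\item For $r>|z|$, expand $\frac{1}{z-re^{i\theta}}=-\sum_{k\ge0} z^k r^{-k-1}e^{-i(k+1)\theta}$. Every term integrates to zero, so the inner integral is $0$.
\end{itemize}
Combining the two cases gives
\begin{align*}
\mb{E}\left[\frac{1}{z-X}\right] = \frac{1}{\pi}\cdot\frac{2\pi}{z}\int_0^{|z|} r\,dr = \frac{|z|^2}{z} = \overline{z}.
\end{align*}
For $z=0$ only the second case occurs, and the integral vanishes by symmetry, which again equals $\overline{0}$.

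There is no real obstacle here. The only points needing care are the justification of Fubini, which comes from the $p=1$ bound, and the uniform convergence of the geometric series on each circle $r\ne|z|$.
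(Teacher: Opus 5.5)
Your proof is correct and complete. The paper does not prove this lemma itself. It cites \cite[Lemma~2.4]{Ghosh-numberofcomponents} for the moment bound and \cite[Example~3.2]{Kabluchko-Seidel} for the identity $\mathbb{E}[1/(z-X)]=\overline{z}$, so your write-up supplies a self-contained argument.

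\begin{itemize}
\item Enlarging $\mathbb{D}$ to the disc $\{|w-z|<2\}$ gives the explicit, $z$-independent constant $C_p=2^{3-p}/(2-p)$.
\item The circle averages $\frac{1}{2\pi}\int_0^{2\pi}\frac{d\theta}{z-re^{i\theta}}$ equal $1/z$ for $r<|z|$ and $0$ for $r>|z|$, and integrating them gives $\overline{z}$.
\item Your use of Fubini is legitimately justified by the $p=1$ case of the first claim.
\end{itemize}

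An alternative route, closer to the rest of the paper, is to differentiate the logarithmic potential $\mathbb{E}[\log|z-X|]=\frac{|z|^2-1}{2}$, which the paper imports from \cite{KLR}. Since $\partial_z\log|z-w|=\frac{1}{2(z-w)}$, this formally gives $\mathbb{E}[1/(z-X)]=2\partial_z\frac{|z|^2-1}{2}=\overline{z}$. That is quicker to state, but it has two costs. First, it requires justifying differentiation under the integral sign when the singularity moves with $z$, so no fixed integrable dominating function is available. Second, the potential formula is itself usually proved by the same circle averaging, via $\frac{1}{2\pi}\int_0^{2\pi}\log|z-re^{i\theta}|\,d\theta=\log\max(|z|,r)$. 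Your direct computation avoids both issues, and the only inputs it needs are the ones you identified.
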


The detailed computation of \eqref{mean of 1/z-X} is provided in \cite[Example~3.2]{Kabluchko-Seidel}

\subsection{Complex analysis preliminaries.}
\begin{Lemma} [Area Theorem] \cite[Section-$2.4$]{Ahlfors}, \cite[Theorem $3$]{crane-polynomialpreimage} \label{area Lemma}
\begin{enumerate}[label=(\alph*)]
\item Let \( f \) be a univalent (injective holomorphic) function on a domain \( D \subseteq \mathbb{C} \), and let \( K \subset D \) be a measurable set. Then the area of the image \( f(K) \) is given by:
\[
\text{Area}\big(f(K)\big) = \int_K |f'(z)|^2 \, dA(z),
\]
where \( dA(z) = dx \, dy \) denotes the Euclidean area element in \( \mathbb{C} \).
\item Let \( p \) be a polynomial of degree \( n \) over \( \mathbb{C} \), and let \( K \) be any measurable subset of the complex plane. Let \( n(z, p, K) \) be the number of preimages of \( z \) under \( p \) that lie in \( K \) counted with multiplicity. Then,
\begin{align}
    \int_{\mathbb{C}} n(z, p, K)\, dA(z) = \int_K |p'(z)|^2\, dA(z).
\end{align}
\end{enumerate}
\end{Lemma}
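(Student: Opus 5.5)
Part (a) is a change of variables. Part (b) is a local-inverse argument that reduces to (a).

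For (a), I would reduce to the real Jacobian change-of-variables formula. Write $f=u+iv$. By the Cauchy--Riemann equations the real Jacobian of $f$, viewed as a map $\mathbb{R}^2\to\mathbb{R}^2$, is $u_x v_y - u_y v_x = u_x^2+u_y^2 = |f'(z)|^2$. A univalent holomorphic map is a $C^1$ injection, and its derivative never vanishes (a standard fact for injective holomorphic maps). So $f$ is a diffeomorphism onto the open set $f(D)$, and measurable sets go to measurable sets. The classical change-of-variables theorem applied to $\mathbbm{1}_{f(K)}$ then gives
\[\text{Area}(f(K))=\int_{f(D)}\mathbbm{1}_{f(K)}\,dA=\int_D \mathbbm{1}_{f(K)}(f(z))\,|f'(z)|^2\,dA(z)=\int_K |f'|^2\,dA,\]
where the last equality uses injectivity, $\mathbbm{1}_{f(K)}\circ f=\mathbbm{1}_K$.

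For (b), I would use local univalence away from the critical points. Let $Z$ be the finite set of critical points of $p$ and $W=p(Z)$, also finite. Both have measure zero, so I may replace $K$ by $K\setminus Z$ on the right and discard the points $w\in W$ on the left; the multiplicity convention only affects $w\in W$. On the open set $U=\mathbb{C}\setminus Z$, every point has a neighbourhood on which $p$ is univalent. Since $U$ is $\sigma$-compact, I can cover it by countably many such neighbourhoods and refine them into a countable disjoint family of Borel pieces $B_j$, with $p$ injective on each $B_j$. Applying (a) to $K\cap B_j$ gives
\[\int_{K\cap B_j}|p'|^2\,dA=\text{Area}(p(K\cap B_j))=\int_{\mathbb{C}}\mathbbm{1}_{p(K\cap B_j)}(w)\,dA(w).\]
Summing over $j$ by monotone convergence, the left sides add up to $\int_K|p'|^2\,dA$. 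On the right, for each $w$ the sum $\sum_j \mathbbm{1}_{p(K\cap B_j)}(w)$ counts each preimage of $w$ in $K\setminus Z$ exactly once, because the $B_j$ are disjoint and $p$ is injective on each piece. This sum is therefore $n(w,p,K)$ for $w\notin W$, which is almost every $w$, and the identity follows.

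The only step needing care is measurability: that $n(\cdot,p,K)$ is measurable and that the images $p(K\cap B_j)$ are measurable. The latter holds because a local diffeomorphism carries measurable sets to measurable sets: it preserves Borel sets and maps null sets to null sets. Measurability of $n(\cdot,p,K)$ then follows from the countable-sum representation. Finiteness is not an issue, since $n(w,p,K)\le n$ and both sides may a priori be infinite when $K$ is unbounded.
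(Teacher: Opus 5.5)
Your proof is correct. It supplies an argument the paper does not give: the lemma is only quoted, with (a) taken from Ahlfors (the subsection on length and area) and (b) from Crane's Theorem~3.

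\textbf{Part (a).} This matches the cited source. The key point in Ahlfors is exactly your observation that the real Jacobian of a holomorphic map is $u_x^2+u_y^2=|f'|^2$.

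\textbf{Part (b).} This is the standard local-inverse (area formula / Banach indicatrix) argument:
\begin{enumerate}[label=(\roman*)]
\item discard the finite critical set $Z$ and its finite image $p(Z)$;
\item partition $\mathbb{C}\setminus Z$ into countably many Borel pieces $B_j$ on which $p$ is injective;
\item apply (a) on each piece and sum by monotone convergence.
\end{enumerate}

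\textbf{What your version adds.} The paper invokes (b) to prove \eqref{sup_epsilon}, with $K=U\cap\{|P|<\epsilon\}$. Compared with the bare citation, your argument makes explicit two facts that application uses without comment:
\begin{enumerate}[label=(\roman*)]
\item the multiplicity convention in $n(w,p,K)$ only matters on the finite, hence null, set $p(Z)$;
\item $w\mapsto n(w,p,K)$ is measurable, because off $p(Z)$ it agrees with $\sum_j \mathbbm{1}_{p(K\cap B_j)}$.
\end{enumerate}

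\textbf{One uncovered case.} Your write-up does not literally cover $\deg p=0$, where $Z=\mathbb{C}$ is not finite. There both sides are trivially zero, so nothing is lost.
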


\begin{Lemma} \cite[Lemma ~2.2]{KLR}\label{Sup bound of polynomials}
Let $G$ be a bounded Jordan domain with rectifiable boundary, and let $p(z)$ be a polynomial 
of degree $n$. Then there exists a constant $C = C(G) > 0$, and points 
$w_1, w_2, \ldots, w_{Cn^2} \in \partial G$ such that 
\begin{equation} \label{eq:lemma2.2}
    \|p\|_{\partial G} \leq 2 \max_{1 \leq k \leq Cn^2} |p(w_k)|.
\end{equation}
\end{Lemma}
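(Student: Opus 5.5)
The plan is to prove the lemma from a Markov-type inequality for polynomials on continua. The sample points will be equally spaced in arc length along $\partial G$, and we will show that between consecutive sample points $p$ cannot change by more than half its sup norm.

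\textbf{Step 1 (Markov inequality).} The set $E=\partial G$ is a continuum: it is a Jordan curve, so it is compact, connected and contains more than one point. Its logarithmic capacity $\mathrm{cap}(E)$ is therefore strictly positive. I would invoke Pommerenke's Markov inequality for continua, which states that for every polynomial $p$ of degree $n$,
\[
\|p'\|_{E} \le \frac{e}{2}\,\frac{n^2}{\mathrm{cap}(E)}\,\|p\|_{E}.
\]
By the maximum principle, $\|p\|_{\partial G}=\|p\|_{\overline G}$ and $\|p'\|_{\partial G}=\|p'\|_{\overline G}$. So it does not matter whether the inequality is applied to $\partial G$ or to $\overline G$.

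\textbf{Step 2 (choice of points).} Let $L<\infty$ be the length of $\partial G$, which is finite because the boundary is rectifiable. Set $N=\lceil C n^2\rceil$ with $C = eL/(2\,\mathrm{cap}(\partial G))$; this constant depends only on $G$. Parametrize $\partial G$ by arc length and choose $w_1,\dots,w_N$ equally spaced. Then every $z\in\partial G$ is joined to some $w_k$ by a subarc $\gamma\subset\partial G$ of length at most $L/(2N)$.

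\textbf{Step 3 (comparison).} Integrate $p'$ along $\gamma$. Using Step 1, this gives
\[
|p(z)-p(w_k)| \le \int_\gamma |p'(\zeta)|\,|d\zeta| \le \|p'\|_{\partial G}\,\frac{L}{2N} \le \frac{e n^2 L}{4N\,\mathrm{cap}(\partial G)}\,\|p\|_{\partial G} \le \frac12\|p\|_{\partial G}.
\]
Now take $z$ to be a point where $|p|$ attains $\|p\|_{\partial G}$. Then
\[
\|p\|_{\partial G}\le |p(w_k)|+\tfrac12\|p\|_{\partial G},
\]
and rearranging yields \eqref{eq:lemma2.2}. If $n=0$, the polynomial is constant and the claim is trivial.

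\textbf{Main obstacle.} The only nontrivial input is the Markov inequality with the sharp $n^2$ growth on an arbitrary rectifiable Jordan curve. Bernstein's inequality on the disc would only give growth of order $n$, but it applies only to the circle. For a general curve, one needs either Pommerenke's capacity-based bound or a cruder argument. The cruder route uses Cauchy's estimate on a disc of radius about $1/n^2$ around each boundary point, together with the Bernstein--Walsh bound $|p(\zeta)|\le \|p\|_{\partial G}\, e^{n g_{\Omega}(\zeta,\infty)}$ for the Green function $g_\Omega$ of the exterior domain $\Omega$. It also requires $g_\Omega(\zeta,\infty)\lesssim \mathrm{dist}(\zeta,\partial G)^{1/2}$ near a continuum, which holds because $\partial G$ is connected. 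I would rely on the cited Pommerenke inequality rather than reprove it.
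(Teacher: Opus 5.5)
Your proof is correct and complete. The paper gives no proof of this lemma; it imports it from \cite{KLR}. Your argument is the standard one: apply Pommerenke's Markov inequality $\|p'\|_{E}\le \frac{e}{2}\frac{n^2}{\mathrm{cap}(E)}\|p\|_{E}$ to the continuum $E=\partial G$, then use an arclength-equispaced net of $\lceil Cn^2\rceil$ points, which is available because $\partial G$ is rectifiable.

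Your construction also shows something the bare statement does not. Any equispaced net works, and it depends only on $G$ and $n$, not on $p$. For a circle, $C=e\pi$ independently of the radius. This is exactly what justifies the paper's use of the scaled $n^3$-th roots of unity on $\partial B(0,1-r_n)$ in the union bound for the inradius of $\Lambda_n$, where the net must not depend on the random polynomial $P_n$.
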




\begin{Lemma}\label{log bound for some integral: lemma}
    Let $B \subset \mb{D} $ and $\alpha \in \mb{C}$. Then there exists an absolute constant $C>0$, such that 
\begin{align}\label{log bound for some integral: equation}
        \bigintsss_B \frac{1}{|w-\alpha|^2} dA(w) \leq C  \log \left(\frac{\sup_{w\in B}|w-\alpha|}{\inf_{w \in B}|w-\alpha|}\right).
    \end{align}
\end{Lemma}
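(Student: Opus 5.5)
The plan is to integrate in polar coordinates centred at $\alpha$. Set $r_0 := \inf_{w\in B}|w-\alpha|$ and $R_0 := \sup_{w\in B}|w-\alpha|$. Then $B$ is contained in the annulus $A := \{w : r_0 \le |w-\alpha| \le R_0\}$, and since the integrand is nonnegative,
\begin{align*}
\int_B \frac{dA(w)}{|w-\alpha|^2} \le \int_{A} \frac{dA(w)}{|w-\alpha|^2} = \int_0^{2\pi}\!\!\int_{r_0}^{R_0} \frac{1}{r^2}\, r\,dr\,d\theta = 2\pi \log\frac{R_0}{r_0}.
\end{align*}
This gives the claim with $C = 2\pi$. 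Note that the hypothesis $B\subset\mathbb{D}$ is used only to guarantee $R_0<\infty$.

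The degenerate cases need a short remark. If $B$ is empty or has measure zero, the left side is $0$ and the inequality holds with any convention. If $r_0 = 0$, the right side is $+\infty$ and there is nothing to prove. When $r_0 = R_0$, the annulus has zero area, so both sides vanish. Measurability of $B$ is implicit, since the integral is assumed to make sense.

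There is no real obstacle here. The only care needed is to use the containment $B\subset A$ together with nonnegativity of the integrand, rather than trying to compute the integral over $B$ itself.
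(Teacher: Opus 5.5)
Your proof is correct and is essentially the paper's argument. Both use polar coordinates centred at $\alpha$, confine the radial variable to $[\inf_B|w-\alpha|,\sup_B|w-\alpha|]$ to get $C=2\pi$, and dismiss the case $\alpha\in\overline{B}$ (your $r_0=0$) as trivial; as a minor aside, the hypothesis $B\subset\mathbb{D}$ is not actually needed, since $R_0=\infty$ would also make the right-hand side infinite.
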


\begin{proof}
    If $\alpha  \in \overline{B}$, then the right-hand side of \eqref{log bound for some integral: equation} in $+\infty$, and this trivially satisfies the inequality. Therefore, assuming that $\alpha \notin \overline{B}$, we express 
\eqref{log bound for some integral: equation} in polar coordinates using the transformation 
$w = \alpha + r e^{i\theta}$ as
\begin{align*}
    \bigintsss_B \frac{1}{|w-\alpha|^2} dA(w)= \bigintsss_{\theta =0}^{2\pi}  \bigintsss_{r: \alpha + r e^{i\theta} \in B} \frac{1}{r^2} rdrd\theta.
\end{align*}
For $z:= \alpha + r e^{i\theta} \in B$, the value of $r=|z-\alpha|$, 
 for any value of $\theta$, the maximal range of $r$ is $\big[ \inf_{z \in B} |z - \alpha|, \; \sup_{z \in B} |z - \alpha| \big].$
Therefore, we obtain
    \begin{align*}
          \bigintsss_B \frac{1}{|w-\alpha|^2} dA(w) \leq \bigintsss_{\theta =0}^{2\pi} \bigintsss_{\inf_{z \in B}|w-\alpha|}^{\sup_{z\in B}|w-\alpha|} \frac{dr}{r} d\theta \leq 2 \pi \log \left(\frac{\sup_{z\in B}|w-\alpha|}{\inf_{z \in B}|w-\alpha|}\right),
    \end{align*}
    which establishes the result.
\end{proof}

\subsection{The Kac-Rice formula.}

The Kac-Rice method for counting roots of a polynomial was introduced by Kac \cites{Kac-I, Kac-II} and Rice \cite{Rice-Mathematicalanalysisofrandomnoise}. Their formula was designed to count the number of real roots for polynomials with i.i.d. coefficients (Kac ensembles). This methodology is nowadays a very standard tool, and it comes in many generalized forms and has been utilized in many applications, see \cite[Chapter 3]{Hough-Krishnapur-Peres} \cite{berzin2022kacriceformulacontemporaryoverview}, \cites{lundberg-ramachandran-JLMS17, lerario-lundberg-PLMS16} for a detailed introduction to it and some instances of its applications. We will use the following complex variant of the Kac-Rice formula. In comparison to the more standard method of counting roots using contour integration, the following method has the advantage that one can count critical points/roots in any region $U$ by simply changing the domain of integration to $U$ in the formula. 

\begin{prop}\label{prior-kac-rice}
    Let $P$ be a polynomial of degree $m$ with only simple roots. Let $U \subset \mb{C}$ be an open set and let $\mathscr{N}(P,U)$ denote the number of roots of $P$ in $U$. Then 
     \begin{align}\label{p1}
        \mathscr{N}(P,U)  = \lim_{\epsilon \to 0} \frac{1}{\pi \epsilon^2} \int_{U} |P'(z)|^2 \mathbbm{1}_{\{|P(z)|<\epsilon\}} A(dz).
    \end{align}
    Moreover, 
    \begin{equation}\label{sup_epsilon}
        \sup_{\epsilon \in (0,\infty)} \frac{1}{\pi \epsilon^2} \int_{U} |P'(z)|^2 \mathbbm{1}_{\{|P(z)|<\epsilon\}} A(dz) \leq m .
    \end{equation}
\end{prop}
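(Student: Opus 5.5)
The plan is to derive both claims from the Area Theorem, Lemma \ref{area Lemma}(b), applied to $p=P$ and $K=U\cap\{|P(z)|<\epsilon\}$. For this $K$, the quantity $n(w,P,K)$ counts the preimages of $w$ under $P$ that lie in $U$ and satisfy $|P(z)|<\epsilon$. It therefore vanishes unless $|w|<\epsilon$, and then it equals the number of preimages of $w$ in $U$, counted with multiplicity. This gives the identity
\begin{align*}
\frac{1}{\pi\epsilon^2}\int_U |P'(z)|^2\mathbbm{1}_{\{|P(z)|<\epsilon\}}\,A(dz)=\frac{1}{\pi\epsilon^2}\int_{\{|w|<\epsilon\}} n(w,P,U)\,dA(w).
\end{align*}
The right-hand side is the average of $n(w,P,U)$ over the disc $\{|w|<\epsilon\}$.

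The bound \eqref{sup_epsilon} follows at once. Every $w\in\mathbb{C}$ has exactly $m$ preimages under $P$, counted with multiplicity, so $n(w,P,U)\le m$. An average of a function bounded by $m$ is at most $m$, and this holds for every $\epsilon>0$.

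For \eqref{p1}, I show that $n(w,P,U)\to\mathscr{N}(P,U)$ as $w\to0$, after which the averages converge to $\mathscr{N}(P,U)$. Let $a_1,\dots,a_k$ be the roots of $P$ in $U$, all simple by hypothesis. Since $P'(a_j)\neq0$, the inverse function theorem gives disjoint neighbourhoods $V_j\subset U$ of the $a_j$ on which $P$ is a biholomorphism onto a neighbourhood of $0$. For small $|w|$, each $V_j$ then contains exactly one preimage of $w$. Any other preimage of $w$ in $U$ must lie close to some root of $P$. The roots near which it could lie are the $a_j$, which is excluded because it would then fall in $V_j$, or roots of $P$ outside $U$.

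The main obstacle is the last possibility. Preimages near a root on $\partial U$ can lie in $U$, since $U$ is open. If $P$ has no roots on $\partial U$, then for small $|w|$ all other preimages lie near roots in the open set $\mathbb{C}\setminus\overline{U}$, hence outside $U$. In that case $n(w,P,U)=k$ for $|w|$ small, and the averages equal $k$ for small $\epsilon$, proving the limit.

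If some root $a$ lies on $\partial U$, the contribution of the preimage near $a$ to the average is $\frac{1}{\pi\epsilon^2}\mathrm{Area}\bigl(P(U\cap V_a)\cap\{|w|<\epsilon\}\bigr)$. This quantity need not tend to $0$, so the equality can genuinely fail. I would therefore state and use \eqref{p1} under the additional assumption that $P$ has no roots on $\partial U$. In the later application this is an almost-sure event, handled by the density argument of Lemma \ref{components=critical}(b). Alternatively, one can note that only the inequality $\le$ between the limit and the count of roots in $\overline{U}$ is needed when combined with \eqref{sup_epsilon} and dominated convergence.
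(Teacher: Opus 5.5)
Your proof is correct and is essentially the paper's. Both rest on the inverse function theorem at the simple roots plus the area formula: the paper applies Lemma~\ref{area Lemma}(a) to each of the $k$ small components of $U\cap\{|P|<\epsilon\}$ to get \eqref{p1} and uses part (b) only for \eqref{sup_epsilon}, while you route both claims through the counting function $n(w,P,U)$ of part (b).

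You are also right that \eqref{p1} is false as stated when $P$ has a root on $\partial U$: for $P(z)=z$ and $U=\{\Re z>0\}$ the limit is $1/2$ while $\mathscr{N}(P,U)=0$. The paper's proof tacitly excludes this case when it asserts that, for small $\epsilon$, $U\cap\{|P|<\epsilon\}$ is a union of exactly $k$ pieces each mapped onto $\{|w|<\epsilon\}$. In the application your extra hypothesis holds almost surely: Lemma~\ref{components=critical}(b) covers $\{|P_n|=1\}$, Gauss--Lucas covers $\{|z|=1\}$, and the same solve-for-$X_1$ argument covers the circle $\{|z|=1-r_n\}$.

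That almost-sure fact is genuinely needed. The one-sided alternative in your last sentence gives only one of the two inequalities the main theorem requires.
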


\begin{proof}
    Let $\mathscr{N}(P,U) = k$. Then, for $\epsilon$ small enough, the set $\{|P(z)| < \epsilon\} = U_1\cup U_2\cup...\cup U_k$, where $U_i\subset U$ are disjoint open connected sets each containing a single root $p_i$ of $P$. Since each root of $P$ is simple, $P'(p_i) \neq 0$. Hence, by the inverse function theorem, for $\epsilon$ small enough, $P$ is injective on each $U_i$. Clearly, $P(U_i) = B(0,\epsilon)$ and $Area(P(U_i)) = \pi \epsilon^2$.  Also, by Lemma \ref{area Lemma}-$(a)$, 
\[ Area(P(U_i)) = \int_{U_i} |P'(z)|^2 A(dz),\]
which implies 
\[ k\pi \epsilon^2 = \int_U |P'(z)|^2\mathbbm{1}_{\bigl\{|P(z)| < \epsilon\bigr\}} A(dz)  ,\]
for all $\epsilon$ small enough. In particular, dividing by $\epsilon^2$ and then letting $\epsilon \to 0$, we obtain \eqref{p1}. The estimate \eqref{sup_epsilon} follows easily from Lemma \ref{area Lemma}-$(b)$.\\
\end{proof}

\subsection{Large deviation theory in the heavy tail regime.}
The large deviation theory for random walks with heavy tail increments is qualitatively very different from the setting of Cram\'er's theorem. This has been well studied in the literature, see \cites{SVNagaevLargedeviationsofsumsofrandomvariables,cline-hsing, Linnik-Ontheprobabilityoflargedeviationsforthesumofindependentvariables, Denisov-Dieker-Shneer-Largedeviationsforrandomwalksundersubexponentiality:thebig-jumpdomain, Mikosch-Embrechts-Claudia, Mikosch-Nagaev-Largedeviationsofheavytailedsumswithapplicationsininsurance, Heyde-Onlargedeviationprobabilitiesinthecaseofattractiontoanon-normalstablelaw, Heyde-Acontributiontothetheoryoflargedeviationsforsumsofindependentrandomvariables, Heyde-Onlargedeviationproblemsforsumsofrandomvariableswhicharenotattractedtothenormallaw, Nagaev-I, Nagaev-II} and references therein. For our purpose, the paper by Cline-Hsing \cite{cline-hsing} will be the most relevant. 

\begin{thm}[Theorem $3.3$, \cite{cline-hsing}] \label{main-use-result-cline-hsing}
    Let $\{Y_k\}_{k\geq 1}$ be an i.i.d. family of random variables with common distribution $F$ and $S_n = Y_1 + Y_2 + ...+ Y_n$. Assume that $\mb{P}[|Y_1| >x]$ is regularly varying with index $-2$ and $\sup_{x \geq 0} F(-x)/(1-F(x)) <\infty$. Assume $Y_1$ has mean zero\footnote{Note that the mean of $Y_1$ is well defined since the tail of $Y_1$ is regularly varying with index $-2$.} and let $t_n$ be a sequence such that 
    
    \[ \lim_{n\to \infty} n(1-F(t_n)) = \lim_{n\to \infty} \frac{n \log t_n \mb{E}[Y_1^2 1_{|Y_1| \leq t_n }]}{t_n^2}= 0.\]

Then, \[ \lim_{n\to \infty} \sup_{s \geq t_n} \biggl|\frac{\mb{P}[S_n >s]}{n (1 - F(s))} -1 \biggr| =0.\]
\end{thm}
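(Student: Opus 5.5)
This is Theorem 3.3 of \cite{cline-hsing}, quoted from the literature, so in the paper it needs no proof. If I had to reprove it, I would use the standard \emph{one big jump} argument. Write $\bar F(x)=1-F(x)=x^{-2}L(x)$ with $L$ slowly varying, and $\ell(x)=\mb{E}[Y_1^2 1_{|Y_1|\le x}]$, which is also slowly varying because the tail has index $-2$. Since $n\ell(t_n)\log t_n/t_n^2\to 0$, $n\bar F(t_n)\to 0$, and both $x\mapsto \ell(x)\log x/x^2$ and $\bar F$ are eventually decreasing, both hypotheses hold uniformly for $s\ge t_n$. Fix a small $\delta>0$; the plan is to show
\[(1-o(1))\,n\bar F((1+\delta)s)\le \mb{P}[S_n>s]\le (1+o(1))\,n\bar F((1-\delta)s)+o(n\bar F(s))\]
uniformly in $s\ge t_n$, and then let $\delta\to 0$ using regular variation.

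\textbf{Lower bound.} The events $\{Y_k>(1+\delta)s,\ \sum_{j\ne k}Y_j>-\delta s\}$ are disjoint up to an error of order $n^2\bar F(s)^2=o(n\bar F(s))$. For $\sum_{j\ne k}Y_j>-\delta s$, I would use Chebyshev after truncating at level $s$. The truncated variance is at most $n\ell(s)=o(s^2)$. The truncation shifts the mean by $n\,\mb{E}[|Y_1|1_{|Y_1|>s}]\lesssim ns\bar F(s)=o(s)$, using Karamata and the assumption $\sup_x F(-x)/\bar F(x)<\infty$. So this event has probability $1-o(1)$, which gives the lower bound.

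\textbf{Upper bound.} I would split $\{S_n>s\}$ into three events.
\begin{enumerate}
\item[(i)] Some $Y_k>(1-\delta)s$: this has probability at most $n\bar F((1-\delta)s)$.
\item[(ii)] At least two indices with $Y_k>\delta s$: this has probability at most $n^2\bar F(\delta s)^2\lesssim (n\bar F(s))^2=o(n\bar F(s))$.
\item[(iii)] All $Y_k\le \delta s$ except at most one lying in $(\delta s,(1-\delta)s]$: then the remaining sum, truncated at $b=\delta s$, must exceed $\delta s$.
\end{enumerate}
For (iii) I would apply Bennett's inequality (Theorem \ref{Bennett's inequality}) with $b=\delta s$, $\nu=n\ell(\delta s)$ and $t\approx\delta s$; as in the lower bound, the centering shift is $o(s)$. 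Since $bt/\nu\to\infty$, we have $h(u)\sim u\log u$, and the bound becomes roughly $\exp\bigl(-\tfrac{t}{b}\log\tfrac{bt}{\nu}\bigr)=(\nu/s^2)^{c/\delta}$ for some constant $c>0$. Case (iii) also carries an extra factor $n\bar F(\delta s)$ when the exceptional jump exists, which only helps.

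\textbf{Main obstacle.} The hard step is to check that $(\nu/s^2)^{c/\delta}=o(n\bar F(s))=o(nL(s)/s^2)$ uniformly in $s\ge t_n$. The ratio $\nu/s^2\le n\ell(s)/s^2$ is only $o(1/\log s)$, so a crude power comparison does not win a polynomial factor in $s$. This is exactly where the $\log t_n$ hypothesis enters. First I would write
\[(\nu/s^2)^{k}=\frac{nL(s)}{s^2}\cdot\frac{\ell(s)}{L(s)}\Bigl(\frac{n\ell(s)}{s^2}\Bigr)^{k-1},\qquad k=c/\delta .\]
Then I would use Potter bounds on $\ell/L$. If these lose a factor $s^{\epsilon}$ that $(1/\log s)^{k-1}$ cannot absorb, I would instead do a finer dyadic decomposition of the moderately large summands in $(s/\log s,\delta s]$. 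The aim is to bound the probability that $j\ge 2$ of them occur by $(n\bar F(s/\log s))^j$; the regime in which all summands are at most $s/\log s$ is then handled by Bennett's inequality with $b=s/\log s$. This gives a super-polynomial bound, $\exp(-c\log s\cdot\log\log s)$, which beats $nL(s)/s^2$.
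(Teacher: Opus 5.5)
Your one-big-jump outline (Bonferroni and Chebyshev for the lower bound, cases (i)--(iii) for the upper bound) is the right framework. The paper itself gives no proof: it cites Cline--Hsing, and its Section 5 runs a truncation-plus-Bennett argument of the same type only for thresholds $c_n=\Theta(n)$, where $n\ell(s)/s^2$ is polynomially small. Your proof, however, is not complete at exactly the step you flag, and the fallback you propose does not close it.

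\textbf{Where the single-level argument works and where it fails.} With all $Y_k\le\delta s$ and threshold $s$, Bennett at $b=\delta s$ gives roughly $(Cn\ell(s)/s^2)^{1/\delta}$. Dividing by $n\bar F(s)$ leaves $\frac{\ell(s)}{s^2\bar F(s)}\bigl(\frac{n\ell(s)}{s^2}\bigr)^{1/\delta-1}$.
\begin{itemize}
\item This tends to $0$ when $\ell(s)/(s^2\bar F(s))$ grows at most like a power of $\log s$. For the paper's $Y_k(r)$ this ratio is $\asymp\log s$, so your first argument already suffices there.
\item It does not tend to $0$ in general. If $x^2\mathbb{P}[|Y_1|>x]=e^{-\sqrt{\log x}}$, then $\ell$ is bounded and the ratio is of order $e^{\sqrt{\log s}}$. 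Meanwhile $n\ell(s)/s^2$ can be as large as $1/(A_n\log s)$ with $A_n\to\infty$ slowly.
\end{itemize}

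\textbf{Why your fallback is too weak.} You propose to bound the probability of $j\ge2$ summands in $(s/\log s,\delta s]$ by $(n\bar F(s/\log s))^j$.
\begin{itemize}
\item Up to slowly varying factors, $n\bar F(s/\log s)\approx(\log s)^2 n\bar F(s)$, and $n\bar F(s)$ is only known to be $o(1/\log s)$. So $n\bar F(s/\log s)$ need not be small.
\item Even when it is small, $(n\bar F(s/\log s))^j$ need not be $o(n\bar F(s))$ for any fixed $j$. Take $\bar F(x)\approx 1/(4x^2)$ and $s=t_n$ with $t_n^2=A_n n(\log t_n)^2$, where $A_n\to\infty$ slower than any power of $\log n$. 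Then $n\bar F(s/\log s)\approx 1/(4A_n)$ while $n\bar F(s)\approx 1/(4A_n(\log s)^2)$.
\end{itemize}
What your bound ignores is that summands near $s/\log s$ must occur in very large numbers to push the sum past $s$.

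\textbf{A clean way to finish.} On $\{S_n>s,\ \max_k Y_k\le\delta s\}$, either the small part $\sum_k Y_k\mathbbm{1}_{|Y_k|\le s/\log s}$ or the moderate part $M:=\sum_k Y_k\mathbbm{1}_{s/\log s<Y_k\le\delta s}$ exceeds $s/2$. (Summands below $-s/\log s$ only lower the sum.)
\begin{itemize}
\item \emph{Small part.} Apply Bennett with $b=s/\log s$; the centering shift is $o(s)$. Since $n\ell(s)\log s/s^2\to0$, this gives $s^{-\omega(1)}$ uniformly in $s\ge t_n$, which beats $n\bar F(s)\ge s^{-3}$. This is $s^{-\omega(1)}$, not necessarily $\exp(-c\log s\log\log s)$ as you wrote, but that is enough.
\item \emph{Moderate part.} Apply Bennett again with $b=\delta s$, but now with $M$'s own variance. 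Potter bounds give $n\mathbb{E}[Y_1^2\mathbbm{1}_{s/\log s<Y_1\le\delta s}]\lesssim n\bar F(s)\,s^2(\log s)^{\epsilon}$, far smaller than $n\ell(s)$. Also $\mathbb{E}M\lesssim n\ell(s)\log s/s=o(s)$. Hence $\mathbb{P}(M>s/2)\le\bigl(Cn\bar F(s)(\log s)^{\epsilon}\bigr)^{1/(3\delta)}$. This is $o(n\bar F(s))$ for small $\delta$, because $n\bar F(s)\le n\ell(s)/s^2=o(1/\log s)$.
\end{itemize}

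\textbf{Two smaller corrections.}
\begin{itemize}
\item Only $\mathbb{P}[|Y_1|>x]$ is assumed regularly varying, so $\bar F$ need not have the form $x^{-2}L(x)$. However, $\bar F\asymp\mathbb{P}[|Y_1|>\cdot\,]$, and the decrements of $\bar F$ are dominated by those of $\mathbb{P}[|Y_1|>\cdot\,]$. So $\bar F$ is dominatedly and consistently varying, which is all your $\delta\to0$ step and the Potter bounds need.
\item In case (iii) with an exceptional jump, Bennett with $t\approx b\approx\delta s$ gives only $O(n\ell(s)/s^2)$. So the factor $n\bar F(\delta s)$ is essential there, not merely helpful.
\end{itemize}
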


We will be interested in utilizing the above result for random variables $Y_k= Y_k(r) := r - Re(1/(r-X_k))$ for $r\in (0,1)$. Note that $Y_k(r)$ is a heavy tailed random variable having $p$ moments for $p<2$, see Lemma \ref{lemma: p<2 moment bound for uniform on disk}. In fact, tails of these random variables are regularly varying with index $-2$, see Lemma \ref{distribution of Y ;lemma} for a precise description of their distribution function. As observed and crucially put to use in \cite{Kabluchko-Seidel}, they lie in the non-normal domain of attraction of the normal distribution. Applying Theorem \ref{main-use-result-cline-hsing} to our setting with $W_n(r)= Y_1(r) + Y_2(r) + ...+ Y_n(r)$ and $t_n =cn$ gives 
\begin{equation}\label{intermediate-bound}
        \mb{P}[\Re(W_n(r)) > cn] \sim n \mb{P}\left[r -\Re\Bigl(\frac{1}{r-X_k}\Bigr) > cn \right] \sim C/n.
    \end{equation} for some constant $C$.

In Section \ref{section:A Large Deviated anticoncentration/small ball estimate}, we strengthen the above result to a stronger estimate which also takes into account the anticoncentration/small-ball estimate in the spirit of the Kolmogorov-Rogozin inequality. Anticoncentration inequalities have been intensively studied in the literature, see \cite{Nguyen-Van-Smallballprobabilityinversetheoremsandapplications} and references therein for an introduction to this subject. 


\section{An a priori localization of critical points outside the lemniscate}
\label{the a priori containment bound}
In view of Lemma \ref{components=critical}, we are interested in counting the number of critical points of $P_n$ outside the lemniscate $\Lambda_n$. We first obtain the following a priori information on the location of critical points outside $\Lambda_n$. We will later use this information towards proving \eqref{mainresult}.

\begin{figure}
    \centering
    \includegraphics[width = 0.475\textwidth]{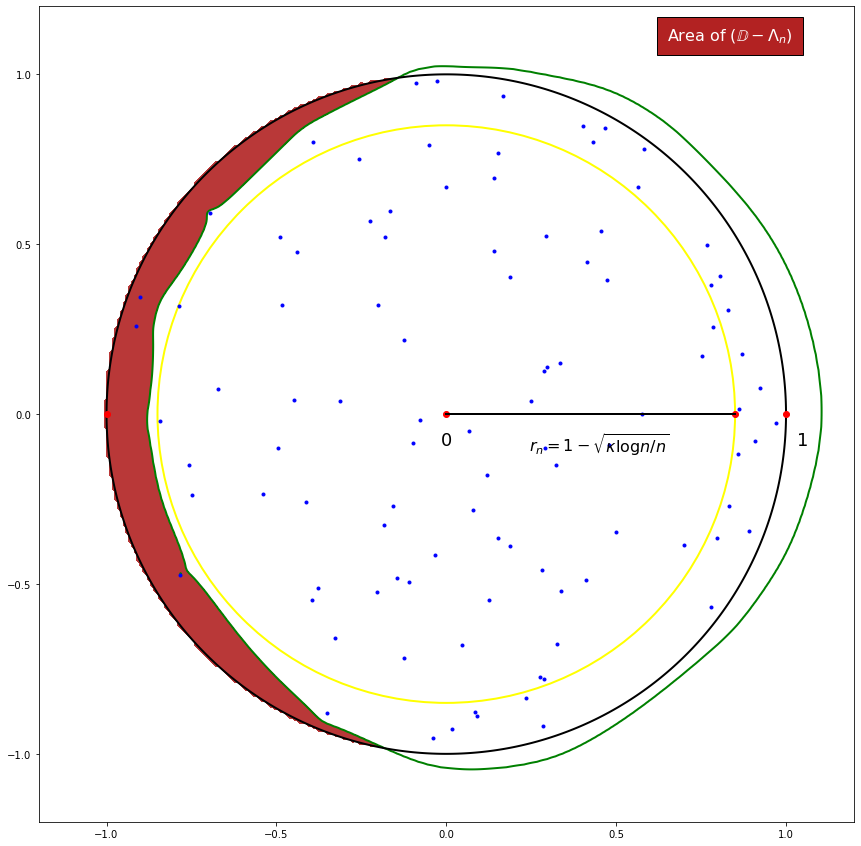}
    \includegraphics[width = 0.475\textwidth]{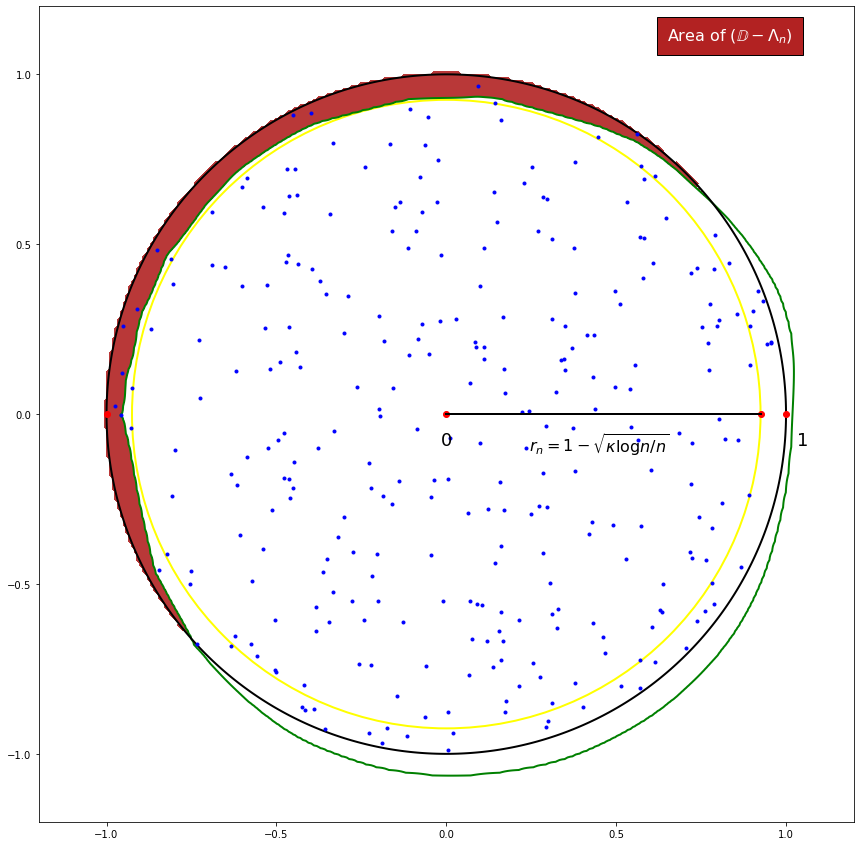}
    \caption{The figure illustrates that, for $n=100$ (left) and $n=300$ (right), 
the disk centered at the origin with radius $1-\tfrac{\kappa \log n}{n}$ (shown in yellow) 
lies entirely inside the lemniscate $\Lambda_n$ (shown in green). 
The region between the disk and the lemniscate is highlighted in red.
}\label{fig: location of lemniscates}
    \end{figure}

\begin{prop}[The inradius bound for $\Lambda_n$]
\label{location of the components of the lemniscate: lemma}
      For every $A>0$ there exists $\kappa=\kappa(A)>0$ such that, with $r_n:=\kappa\sqrt{\frac{\log n}{n}}$, we have
     \begin{align} \label{location of components of the lemniscate :eq}
         \mb{P}\left(\overline{\mb{D}}_{1-r_n} \vspace{.05in} \subset \vspace{.05in} \Lambda_n \right) \geq 1 - \frac{1}{n^A}.
     \end{align}
\end{prop}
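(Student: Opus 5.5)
By the maximum principle applied to $P_n$ (which is analytic), $\overline{\mb{D}}_{1-r_n}\subset\Lambda_n$ as soon as $\max_{|z|=1-r_n}|P_n(z)|<1$. The plan is therefore to prove
\[
\mb{P}\Bigl(\sup_{|z|=1-r_n}\log|P_n(z)|\ge 0\Bigr)\le n^{-A}.
\]
This splits into two steps: discretize the circle using Lemma \ref{Sup bound of polynomials}, then apply a concentration inequality at each of the resulting points.

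\textbf{Discretization.} Apply Lemma \ref{Sup bound of polynomials} with $G=\mb{D}_{1-r_n}$. Since $G$ is a rescaling of the unit disc, the constant $C$ can be taken uniform in $n$; alternatively, apply the lemma to $p(w)=P_n((1-r_n)w)$ on $\mb{D}$. This gives deterministic points $w_1,\dots,w_{Cn^2}$ on $|z|=1-r_n$ with
\[
\|P_n\|_{\partial G}\le 2\max_k|P_n(w_k)|.
\]
The points $w_k$ depend on $P_n$, but only through finitely many points. To avoid randomness in their choice, I would instead use the standard Bernstein-type version: the equispaced points $w_k=(1-r_n)e^{2\pi i k/N}$ with $N=Cn^2$ suffice for every polynomial of degree $n$, which is what the cited proof gives. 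A union bound then yields
\[
\mb{P}\bigl(\exists k:\ |P_n(w_k)|\ge \tfrac12\bigr)\le Cn^2\sup_{|z|=1-r_n}\mb{P}\bigl(\log|P_n(z)|\ge -\log 2\bigr).
\]

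\textbf{Pointwise estimate.} Fix $|z|=\rho=1-r_n$ and write $\log|P_n(z)|=\sum_k Y_k$ with $Y_k=\log|z-X_k|$. We have $\E Y_k=(\rho^2-1)/2\approx -r_n$, so $\E\log|P_n(z)|\approx -nr_n=-\kappa\sqrt{n\log n}$. Each $Y_k\le\log 2=:b$ almost surely, and $\E Y_k^2\le C$ uniformly in $z\in\mb{D}$ (the logarithmic singularity is square integrable). Apply Bennett's inequality (Theorem \ref{Bennett's inequality}) with $\nu\le Cn$ and
\[
t=n r_n(1-o(1))-\log 2\ \ge\ \tfrac{\kappa}{2}\sqrt{n\log n}.
\]
Since $bt/\nu\to0$, we have $h(u)\sim u^2/2$, so the bound becomes
\[
\exp\Bigl(-c\,\frac{t^2}{\nu}\Bigr)\le \exp\bigl(-c'\kappa^2\log n\bigr)=n^{-c'\kappa^2}.
\]
Choosing $\kappa$ with $c'\kappa^2\ge A+3$ absorbs the $Cn^2$ union-bound factor and gives $\le n^{-A}$ for large $n$. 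Only the upper tail is needed, which is exactly where $Y_k$ is bounded; the lower tail, where $\log|z-X_k|$ is unbounded below, is irrelevant.

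The main obstacle is the discretization. One has to make sure the net of $O(n^2)$ points controls the supremum of a degree-$n$ polynomial on the circle of radius $1-r_n$ with a constant uniform in $n$, so that the union bound costs only a polynomial factor. If Lemma \ref{Sup bound of polynomials} as stated is ambiguous on this point, I would argue directly with Bernstein's inequality $\|P_n'\|_{\partial G}\le \frac{n}{1-r_n}\|P_n\|_{\partial G}$. On an arc of length $\lesssim 1/(2n)$ this inequality shows that the value at the nearest net point is at least half the maximum, so $O(n)$ points already suffice.
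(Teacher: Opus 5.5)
Your proposal is correct and is essentially the paper's argument: you reduce via the maximum principle to the circle $|z|=1-r_n$, control the supremum there by a deterministic polynomial-size net (the paper takes the $n^3$-th roots of unity scaled by $1-r_n$ and invokes Lemma \ref{Sup bound of polynomials}), apply Bennett's inequality at a single point with $t\approx\kappa\sqrt{n\log n}$ and $\nu=\Theta(n)$, and conclude by a union bound. Your explicit Bernstein-inequality check that equispaced points work simultaneously for every degree-$n$ polynomial, which is what makes a union bound over a non-random net legitimate, is a useful clarification of the same route rather than a different one.
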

     
\begin{proof}
The proof is an adaptation of ideas from \cite{KLR}, and it is based on concentration inequalities along with a chaining argument. By the definition of $\Lambda_n$, it is enough to show that the following holds.
    \begin{align*}
        \mb{P}\left( \sup_{z \in B(0,1-r_n)}|P_n(z)| <1 \right) \geq 1 - \frac{1}{n^A}.
    \end{align*}
    The proof proceeds as follows. First, we fix a point $z_0 \in \partial B(0,1-r_n)$ and show that 
the probability $\mathbb{P}\!\left(|P_n(z_0)| < \tfrac{1}{2}\right)$ decays polynomially in $\frac{1}{n}$. Next, by selecting $n^3$ 
uniformly distributed points on $\partial B(0,1-r_n)$, we obtain an upper bound for 
$\sup_{z \in B(0,1-r_n)} |P(z)|$ in terms of the values of $|P|$ at these points using Lemma \ref{Sup bound of polynomials}. Finally, an application of the union bound yields the desired result. 
We now begin with the probability estimates.
    \begin{align*}
                &\mathbb{P}\left[\left\{|P_n(z_0)|<\frac{1}{2}\right\}\right]=\mathbb{P}\Big(\sum_{i=1}^n\log|z_0-X_i|<-\log2\Big) \\
                &=\mathbb{P}\left(\sum_{i=1}^n(\log|z_0-X_i|-\mathbb{E}[\log|z_0-X_i|])<-\log2-n\mathbb{E}[\log|z_0-X_i|]\right) \\
                &=1-\mathbb{P}\left(\sum_{i=1}^n(\log|z_0-X_i|-\mathbb{E}[\log|z_0-X_i|])>-\log2-n\frac{(1-r_n)^2-1}{2}\right),
    \end{align*}
        where in the last line we used the fact that $\mathbb{E}[\log|z-X_i|]=\frac{|z|^2-1}{2}$ (c.f. \cite[Example 1.7]{KLR}). Now, we use Bennett's inequality with the mean zero random variables $Y_j=\log|z_0-X_j|-\mb{E}[\log|z-X_j|], b=\log2+\frac{1}{2}$ and $t=-\log2-n\frac{(1-r_n)^2-1}{2}$ to arrive at
        \begin{align}\label{probability bound for fixed $Z_0$}
            \mathbb{P}\left[\left\{|P_n(z_0)|<\frac{1}{2}\right\}\right] \geq 1- \exp\Big(-\frac{\nu}{b^2}h\left(\frac{bt}{\nu}\right)\Big).
        \end{align}
        Since all the moments of $Y_j$'s are uniformly bounded in the unit disk, $\nu=\Theta(n)$, hence
        \begin{align}\label{bt/nu going to zero}
             \frac{bt}{\nu} & \gtrsim \frac{\left(\log2+\frac{1}{2}\right) (-\log2-n\frac{(1-r_n)^2-1)}{2})}{n}
             \gtrsim \frac{n \kappa\sqrt{\frac{\log n}{n}} }{n}
             \gtrsim \kappa \sqrt{\frac{\log n}{n}} \longrightarrow 0.
        \end{align}
        In view of \eqref{bt/nu going to zero}, we approximate $h(u):=(1+u)\log(1+u)-u$ using Taylor's theorem near $0$. Computing the derivatives of $h$ at $0$, we obtain \[
            h(0)=h'(0)=0, \quad h''(0)=1, \quad h'''(u)=-\frac{1}{(1+u)^2}.
        \]
        Hence, for sufficiently small $u$,
        \begin{align*}
            h(u)&=\frac{u^2}{2}h''(0)+\frac{u^3}{6}h'''(\xi u)
            \geq \frac{u^2}{2}h''(0)+\frac{u^3}{6}h'''(0)
            \geq \frac{u^2}{4}.
        \end{align*}
        Combining these with the fact that $h$ is increasing, we obtain
        \begin{align}\label{bound for h(bt/nu)}
            \frac{\nu}{b^2}h\left(\frac{bt}{\nu}\right) \gtrsim n h\left(\kappa\sqrt{\frac{\log n}{n}}\right)  \gtrsim \kappa \log n.
        \end{align}
    Substituting \eqref{bound for h(bt/nu)} into 
\eqref{probability bound for fixed $Z_0$}, we obtain that there exists an absolute constant $C$ such that, for sufficiently large $n$,
    \begin{align}\label{final bound used for net argument}
        \mathbb{P}\left[\left\{|P_n(z_0)|<\frac{1}{2}\right\}\right] \geq 1- \exp\Big(-C\kappa\log n\Big) \geq  1-\frac{1}{n^{C\kappa}}.
    \end{align}
Let $\{\tildea{w}_k\}_{k=1}^{n^3}$ denote the $n^3$-th roots of unity, and consider their scaled version $\{w_k:=(1-r_n)w_k\}_{k=1}^{n^3}$, which we use as a net to apply Lemma~\ref{Sup bound of polynomials}. Then, applying a union bound, we obtain
\begin{align*}
    \mb{P}\left( \sup_{z \in B(0,1-r_n)}|P_n(z)| < 1 \right)
&\;\geq\; \mathbb{P}\!\left\{ \max_{1 \leq k \leq n^3} 
\big( \log |P_n(w_k)| + \log 2 \big) < 0 \right\}\\
&\;\geq\; 1- \mathbb{P}\!\left\{ \cup_{1 \leq k \leq n^3} 
\big( \log |P_n(w_k)| + \log 2 \big) > 0 \right\}\\[.5em]
&\;\geq\; 1- n^3\mathbb{P}\!\left\{ 
\big( \log |P_n(w_1)| + \log 2 \big) > 0 \right\}\\
&\;\geq\; 1- \frac{1}{n^{\kappa C-3}},
\end{align*}
where we have used the bound from \eqref{final bound used for net argument} in the last inequality.
\end{proof}

    
As a result of the above proposition, we obtain the following reduction while determining the asymptotic of $\mb{E}[\mathscr{C}(\Lambda_n)]$. In what follows, we denote the annulus $ \mb{A}_n = \{ z  \hspace{1mm}\bigl| \hspace{1mm} 1 - r_n<|z| <1\}$ and $\{\beta_j\}_{j=1}^{n-1}$ to be the set of critical points of $P_n$. Define
\begin{equation}\label{def-tilde-for-C}
\tildea{\mathscr{C}}(\Lambda_n) := 1 + \left|\left\{ j: \beta_j \in \Lambda_n^c \cap \mb{A}_n \right \}\right|.
\end{equation}

\begin{Lemma}\label{tilde-C} Let $A> 1/2$, $\kappa = \kappa(A)$ and $r_n$ be given by Proposition \ref{location of the components of the lemniscate: lemma}. Then we have 
 \[ \Big|\mb{E}[\mathscr{C}(\Lambda_n)]  -  \mb{E}[\tildea{\mathscr{C}}(\Lambda_n)] \Big| = o(\sqrt{n}).\]
\end{Lemma}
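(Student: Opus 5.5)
By Lemma \ref{components=critical}(a), $\mathscr{C}(\Lambda_n) = 1 + |\{j : \beta_j \in \Lambda_n^c\}|$. We therefore write
\[
\mathscr{C}(\Lambda_n) - \tildea{\mathscr{C}}(\Lambda_n) = \left|\left\{ j : \beta_j \in \Lambda_n^c \setminus \mb{A}_n\right\}\right| .
\]
This difference is nonnegative, so it suffices to show that its expectation is $o(\sqrt{n})$.

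By the Gauss--Lucas theorem every $\beta_j$ lies in the closed convex hull of the roots $X_1,\dots,X_n$, which is contained in $\mb{D}$ almost surely. Hence a critical point outside $\mb{A}_n$ must lie in $\overline{\mb{D}}_{1-r_n}$. The boundary case $|\beta_j| = 1 - r_n$ is harmless, since the closed disc is used in Proposition \ref{location of the components of the lemniscate: lemma}. It follows that
\[
\Lambda_n^c \cap \{\beta_j\}_j \setminus \mb{A}_n \subset \Lambda_n^c \cap \overline{\mb{D}}_{1-r_n}.
\]
On the event $E_n := \{\overline{\mb{D}}_{1-r_n} \subset \Lambda_n\}$ the right-hand side is empty. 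Therefore the difference above vanishes on $E_n$.

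Off $E_n$ we use the deterministic bound: the difference is at most the total number of critical points, which is $n-1$. Combining this with Proposition \ref{location of the components of the lemniscate: lemma} gives
\[
0 \le \mb{E}[\mathscr{C}(\Lambda_n)] - \mb{E}[\tildea{\mathscr{C}}(\Lambda_n)] \le (n-1)\,\mb{P}(E_n^c) \le n \cdot n^{-A} = n^{1-A}.
\]
Since $A > 1/2$, we have $n^{1-A} = o(\sqrt{n})$, which proves the lemma.

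There is no real obstacle; the only point to check is that Gauss--Lucas places all critical points inside $\mb{D}$, so that $\mb{D} = \overline{\mb{D}}_{1-r_n} \cup \mb{A}_n$ covers them. One could also just note that $|X_k| < 1$ almost surely, so the convex hull of the roots is a compact subset of $\mb{D}$.
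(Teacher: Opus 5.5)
Your proposal is correct and follows the paper's proof. The difference $\mathscr{C}(\Lambda_n)-\tildea{\mathscr{C}}(\Lambda_n)$ vanishes on the inradius event of Proposition \ref{location of the components of the lemniscate: lemma}, because by Gauss--Lucas all critical points lie in the unit disc. Off that event you bound the difference by the number of critical points, which gives $n\cdot n^{-A}=n^{1-A}=o(\sqrt{n})$.
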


\begin{proof}
Note that 
\[ \mathscr{C}(\Lambda_n) - \tildea{\mathscr{C}}(\Lambda_n) = |j : \beta_j \notin \Lambda_n \cup \mb{A}_n|.\]
Note that the event $\{ \overline{\mb{D}}_{1-r_n} \vspace{.05in} \subset \vspace{.05in} \Lambda_n\}$ implies $\mb{D} \subset \Lambda_n \cup \mb{A}_n$. By Gauss-Lucas theorem, all critical points $\beta_j \in \mb{D}$. Hence, $|j : \beta_j \notin \Lambda_n \cup \mb{A}_n| = 0$ on the event $\{ \overline{\mb{D}}_{1-r_n} \vspace{.05in} \subset \vspace{.05in} \Lambda_n\}$. Therefore, using Proposition \ref{location of the components of the lemniscate: lemma}, we obtain that  
\[ |\mb{E}[\mathscr{C}(\Lambda_n)]  -  \mb{E}[\tildea{\mathscr{C}}(\Lambda_n)] | \leq n \mb{P}[\overline{\mb{D}}_{1-r_n} \vspace{.05in} \not\subset \vspace{.05in} \Lambda_n] \leq \frac{1}{n^{A-1}},\]
which implies the claim. 
\end{proof}

\section{Asymptotic of expected Area of \texorpdfstring{$\mb{D}\setminus\Lambda_n$}{D \ Lambda n}}
\label{the section on expected area}

In this section, we determine the asymptotic of $\mb{E}[Area(\mb{D}\setminus \Lambda_n)]$. We claim that $\mb{E}[Area(\mb{D}\setminus \Lambda_n)] \sim C/\sqrt{n}$ for some constant $C$. Note that the a priori estimate given by Proposition \ref{location of the components of the lemniscate: lemma} gives $\mb{E}[Area(\mb{D}\setminus \Lambda_n)] = O(\sqrt{\log n /n})$. 
To remove the extra $\sqrt{\log n }$ factor and enhance it to the $\sim C/\sqrt{n}$ asymptotic, one has to take into account the area of the red shaded region in Figure \ref{fig: location of lemniscates}.
The following Proposition, which is a consequence of Edgeworth expansion, gives us the required estimate along with the error bound. For the random variable $\log|r-X_1|$ and sequence $C_n$, let us write   
\begin{align}\label{some-nota}
    &u(r):= \mb{E}[\log|r-X_1|]  \\  &\sigma(r):= \sqrt{\Var\big(\log|r-X_1|\big)} \nonumber\\ & \gamma_3(r):= \mb{E}[(\log|r-X_1| -u(r))^3] \nonumber\\ & Q_{1,r}(x):=  \frac{-\gamma_3(r)}{6\sqrt{2\pi} \sigma(r)^3} (x^2 -1) e^{-x^2/2} \nonumber \\  
    &C_r(n):=\frac{nC_n- n u(r)}{\sqrt{n}\sigma(r)}.\nonumber
\end{align}
The function $u(r)$ can be explicitly computed and it turns out to be $u(r) = \frac{r^2-1}{2}$ for $r\in [0,1]$, see Example $1.7$, \cite{KLR}.

\begin{prop}\label{edgeworth-prop}
    For any sequence $C_n$ of non-negative numbers such that $\sqrt{n}C_n \to 0$, 
    \begin{align}\label{Area-1}
        \mb{P}&\bigl[ \log |P_n(X_0)| > nC_n, X_0 \in \mb{A}_n\bigr] \\& \qquad\quad= 2 \bigintsss_{ 1- \kappa\sqrt{\frac{\log n}{n}}}^1 (1- \Phi(C_r(n)))rdr + 2 \bigintsss_{ 1- \kappa\sqrt{\frac{\log n}{n}}}^1 \frac{Q_1(C_r(n))}{\sqrt{n}}rdr + O\left(\frac{1}{n} \right), \nonumber
    \end{align}
    where $\Phi$ is the distribution function of a standard normal random variable. In particular, by taking $C_n = 0$, 
\begin{equation}\label{Area-2}
 \mb{E}[Area(\mb{D}\setminus \Lambda_n)] = 2 \pi\bigintsss_{ 1- \kappa\sqrt{\frac{\log n}{n}}}^1 \biggl(1- \Phi\biggl(\frac{\sqrt{n}(1-r^2)}{2\sigma(r)}\biggr)\biggr)rdr + O\biggl(\frac{1}{n}\biggr).
\end{equation}
    Moreover, as $n\to \infty$, 
    \begin{equation}\label{gauss-integral-converge}
      \sqrt{n} \bigintsss_{ 1- \kappa\sqrt{\frac{\log n}{n}}}^1 (1- \Phi(C_r(n)))rdr \to   \sqrt{\frac{1}{2\pi}Var(\log(|1-X_1|))}.
    \end{equation}
\end{prop}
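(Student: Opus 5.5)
Condition on $X_0$ and use rotation invariance. Since $X_0$ is uniform on $\D$ and independent of $X_1,\dots,X_n$, and the law of $(X_k)$ is invariant under rotations, $\log|P_n(X_0)|$ given $|X_0|=r$ has the law of $\sum_{k=1}^n \log|r-X_k|$. The density of $|X_0|$ is $2r\,dr$, so
\[
\mb{P}\bigl[\log|P_n(X_0)|>nC_n,\ X_0\in\mb{A}_n\bigr]=2\int_{1-r_n}^1 \mb{P}\Bigl[\tfrac{\sum_k(\log|r-X_k|-u(r))}{\sqrt n\,\sigma(r)}>C_r(n)\Bigr]\,r\,dr .
\]

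\textbf{Step 1: Edgeworth expansion.} Apply Theorem \ref{edgeworth expansion Theorem} to $Y_k=\log|r-X_k|-u(r)$ at $x=C_r(n)$. This replaces the probability by $1-\Phi(C_r(n))-Q_{1,r}(C_r(n))/\sqrt n$ plus an error. Integrated over $r$ in an interval of length $r_n=\kappa\sqrt{\log n/n}$, the error must be $O(1/n)$, so we need the pointwise error to be $O(1/n)$ with a constant uniform in $r$ near $1$, or at least $o(1/(\sqrt{n}\sqrt{\log n}))$. Check the three terms in turn.
\begin{itemize}
\item \emph{Moment terms.} For $r\in[1/2,1]$, $\log|r-X|$ has all moments uniformly bounded, since its tail comes only from the logarithmic singularity. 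The $\sigma(r)$ are bounded below by continuity, with $\sigma(1)^2=\tfrac12(\zeta(2)-1)>0$. Bounding $\int|y|^3\mathbbm 1_{|y|\ge \sigma\sqrt n}\,dF\le \mb{E}|y|^4/(\sigma\sqrt n)$ and the fourth-moment term by $\mb{E}|y|^4/n$ gives $O(1/n)$ uniformly.
\item \emph{Characteristic-function term.} This requires $\sup_{|t|\ge\delta}|\nu_r(t)|\le 1-c$ uniformly in $r$. Theorem \ref{bobkov et al Theorem} needs a bounded density, but $\log|r-X|$ has density bounded by a constant times $\sup_s e^{2s}\cdot(\text{arc length})\le C$. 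Indeed, $\mb{P}(\log|r-X|\in ds)$ is $e^{2s}$ times the length of the circle of radius $e^s$ inside $\D$, divided by $\pi$, which is bounded for $s\le\log 2$. So $M$ is uniform, and the bound $(1-c)^n n^6$ is superpolynomially small.
\end{itemize}
This yields \eqref{Area-1}, with $Q_1$ read as $Q_{1,r}$.

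\textbf{Step 2: specialise to $C_n=0$.} Use $u(r)=(r^2-1)/2$, so $C_r(n)=\sqrt n(1-r^2)/(2\sigma(r))\ge0$. Fubini gives $\mb{E}[\mathrm{Area}((\D\setminus\Lambda_n)\cap\mb{A}_n)]=\pi\,\mb{P}[\cdot]$. The part of $\D\setminus\Lambda_n$ outside $\mb{A}_n$ has expected area at most $\pi n^{-A}$ by Proposition \ref{location of the components of the lemniscate: lemma}. Then drop the $Q_1$ term: $|\gamma_3(r)|/\sigma(r)^3$ is bounded and the substitution $x=C_r(n)$, $dr\approx 2\sigma/(\sqrt n\cdot 2r)\,dx$, gives $\int Q_{1,r}(C_r(n))\,r\,dr=\int (x^2-1)e^{-x^2/2}\,dx\cdot\sigma(1)/\sqrt n+\text{error}$. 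The main term vanishes because $\int_0^\infty (x^2-1)e^{-x^2/2}\,dx=0$. The error from the $r$-dependence of $\sigma,\gamma_3$ is $O(r_n)$ relative, so the $Q_1$ contribution is $O(\sqrt{\log n}/n)$ after the extra $1/\sqrt n$. Getting a clean $O(1/n)$ here needs a little care, e.g. Taylor expanding $\sigma(r)$ and using that $x^k e^{-x^2/2}$ terms integrate to $O(1/\sqrt n)$. I expect this bookkeeping, together with the uniform density/characteristic-function bound, to be the main technical obstacle.

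\textbf{Step 3: the limit \eqref{gauss-integral-converge}.} Substitute $r=1-s/\sqrt n$. Then $C_r(n)=s(1+O(s/\sqrt n))/\sigma(r)\to s/\sigma(1)$ by continuity of $\sigma$ at $1$, and $\sqrt n\int(\cdots)\,r\,dr=\int_0^{\kappa\sqrt{\log n}}(1-\Phi(C_r(n)))\,r\,ds$. The integrand is dominated by $1-\Phi(cs)$, which is integrable, so dominated convergence gives
\[
\int_0^\infty\bigl(1-\Phi(s/\sigma(1))\bigr)\,ds=\sigma(1)\,\mb{E}[Z^+]=\frac{\sigma(1)}{\sqrt{2\pi}},
\]
as claimed. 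Continuity of $\sigma$ at $r=1$ follows by dominated convergence, since $\log^2|r-X|$ is uniformly integrable.
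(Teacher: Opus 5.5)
Your proposal is correct and follows the paper's proof essentially step by step: polar coordinates and rotation invariance, the Edgeworth expansion with the moment error controlled uniformly by fourth moments and $\inf_r\sigma(r)>0$, the characteristic-function error controlled via a uniform density bound and Theorem~\ref{bobkov et al Theorem}, the inradius bound for the area outside $\mathbb{A}_n$, and a rescaling plus dominated convergence for \eqref{gauss-integral-converge}. The step you flag as the main obstacle is in fact immediate: since $|Q_{1,r}(x)|\le K e^{-x^2/4}$ uniformly in $r\in[1/2,1]$, and $C_r(n)\ge y/\sup_r\sigma(r)$ with $y=\sqrt n(1-r^2)/2$ and $r\,dr=dy/\sqrt n$, one has $\int_{1-r_n}^1 |Q_{1,r}(C_r(n))|\,r\,dr=O(1/\sqrt n)$ in absolute value, so the $Q_1$ term is $O(1/n)$ with no cancellation and no Taylor expansion of $\sigma,\gamma_3$ needed (your $O(\sqrt{\log n}/n)$ came only from bounding the integrand by a constant over the whole interval of length $r_n$), and this also makes the sign in front of the $Q_1$ term immaterial (you correctly take it to be negative).
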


\vspace{5mm}
To prove the above proposition, we will need the following lemmas.

\begin{Lemma}\label{estimates for the characteristic fuction using bobkov et al} Let $p_r(x)$ denote the density of the random variable $\log|r-X_1|$ and let $\nu_r(t) = \mb{E}[e^{it\log|r-X_1|}]$ be its characteristic function. Then, $p_r(x)$ is uniformly bounded for $x\in \mb{R}$, and $r\in [1/2, 1]$. Moreover, for every $\delta>0$, there exists a $\eta = \eta (\delta) > 0$ such that
    \begin{align}
        \sup_{\frac{1}{2}\leq r \leq 1} \sup_{|t|> \delta} |\nu_r(t)| \leq  1-\eta.
    \end{align}
\end{Lemma}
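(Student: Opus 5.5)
The plan is to compute the density of $\log|r-X_1|$ explicitly, bound it uniformly, and then feed it into Theorem \ref{bobkov et al Theorem}. For $s\ge 0$ set $F_r(s)=\mb{P}[|r-X_1|<s]=\frac{1}{\pi}\,\mathrm{Area}\bigl(B(r,s)\cap\mb{D}\bigr)$. The density of $Z_r:=\log|r-X_1|$ at $x$ is then $p_r(x)=e^{x}F_r'(e^{x})$. Since $X_1$ has density $\frac1\pi\mathbbm{1}_{\mb{D}}$, polar coordinates about $r$ give $F_r'(s)=\frac{1}{\pi}\,s\,\ell_r(s)$, where $\ell_r(s)\in[0,2\pi]$ is the angular measure of $\{\theta: r+se^{i\theta}\in\mb{D}\}$. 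Hence
\[ p_r(x)=\frac{1}{\pi}e^{2x}\ell_r(e^x)\le 2e^{2x}. \]
Moreover $p_r(x)=0$ whenever $e^x>1+r$, because then $B(r,e^x)\supset\mb{D}$ contributes no boundary arc inside $\mb{D}$. So $p_r(x)\le 2(1+r)^2\le 8$ for all $x\in\mb{R}$ and $r\in[1/2,1]$, which proves the first claim with $M=8$.

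For the characteristic function bound I would apply Theorem \ref{bobkov et al Theorem} with $M=8$. This requires a two-sided uniform control of $\sigma(r)^2=\Var(Z_r)$ on $[1/2,1]$.
\begin{itemize}
\item \emph{Upper bound.} $\sigma(r)^2\le \mb{E}[\log^2|r-X_1|]$, and $\log^2 s\le C_p s^{-p}$ for $s\le 1$ (any $p<2$). Together with Lemma \ref{lemma: p<2 moment bound for uniform on disk} and $|r-X_1|\le 2$, this gives $\sigma(r)^2\le \Sigma<\infty$ uniformly.
\item \emph{Lower bound.} This comes for free from the density bound: a random variable with density at most $M$ has $\Var\ge c/M^2$ (a standard fact; $\mb{P}[|Z-\mu|<a]\le 2aM$ gives it with $a=1/(4M)$). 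So $\sigma(r)^2\ge \sigma_0^2>0$ uniformly.
\end{itemize}

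Now fix $\delta>0$ and take $|t|>\delta$.
\begin{itemize}
\item If $|t|\ge \pi/(4\sigma(r))$, then \eqref{bobkov eq:bound1} gives $|\nu_r(t)|<1-C_1/(M^2\Sigma)$.
\item If $\delta<|t|<\pi/(4\sigma(r))$, then \eqref{bobkov eq:bound2} gives $|\nu_r(t)|<1-C_2\delta^2/M^2$. This case is vacuous unless $\delta<\pi/(4\sigma_0)$, but it is harmless either way.
\end{itemize}
Taking $\eta=\min\{C_1/(64\Sigma),\,C_2\delta^2/64\}$ yields the claimed uniform bound.

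The only step needing care is the uniform density bound near $r=1$, where the singularity of $\log|r-X_1|$ meets the boundary of the disc. The explicit formula above shows this is harmless, since the density bound never depends on $r$ except through $1+r\le 2$.
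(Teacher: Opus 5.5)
Your proof is correct and follows essentially the same route as the paper: a uniform density bound from the area of thin annuli centred at $r$, then a uniform upper bound on $\sigma(r)^2$, then Theorem \ref{bobkov et al Theorem}. Your formula $p_r(x)=\frac{1}{\pi}e^{2x}\ell_r(e^x)\le 2e^{2x}$, supported on $e^x\le 1+r$, is a tidier packaging of the paper's three-region Lipschitz argument. Two small remarks: the lower bound on $\sigma(r)$ is never used, since neither case of the Bobkov bound needs it once $|t|>\delta$; and the $p<2$ moment bound, stated for $z\in\mb{D}$, extends to $r=1$ by Fatou's lemma.
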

\begin{proof}
We will use Theorem \ref{bobkov et al Theorem}. The second moment of $\log|r-X_1|$ is finite and since it is continuous in $r$, it is bounded above and below uniformly in $r$, see \cite[lemma 2.3]{Ghosh-numberofcomponents}. Next, we claim that the density of $\log|r-X_1|$ is bounded uniformly in $r$. Since $X_1$ is uniformly distributed on $\mb{D}$, it is easy to see (c.f. Figure \ref{fig: density of log|r-X_2|}-Right) 
    \begin{align}\label{density of log|r-X_2|}
        \mb{P}\left(\log|r-X_1| \leq x  \right) = \begin{cases}
                                                    & e^{2x} \hspace{1in}  \quad  \quad  \textit{for} \quad  x \leq \log(1-r)\\
                                                    & 1 \quad \hspace{1.12in} \quad  \textit{for} \quad x \geq \log2 .\quad  \quad 
                                                  \end{cases}
    \end{align}
    Let $p_r(x)$ be the density of $\log|r-X_1|$. Since $\log(1-r)<0$, from \eqref{density of log|r-X_2|} we see that $|p_r(x)| \leq2$ for all $x \in (-\infty, \log(1-r)) \cup (\log2, \infty)$. For $x \in [ \log(1-r),\log2] $, we show that the distribution function of $\log|r-X_1|$ is a Lipschitz function. Let $x \geq y$, then from a simple geometric consideration (c.f. Figure \ref{fig: density of log|r-X_2|}-Left), it follows that
    \begin{align*}
         &|\mb{P}\left(\log|r-X_1| \leq x  \right)-\mb{P}\left(\log|r-X_1| \leq y  \right)| \\
         =&|\mb{P}\left(|r-X_1| \leq e^x  \right)-\mb{P}\left(|r-X_1| \leq e^y  \right)|\leq \pi |e^{2x}-e^{2y}| \leq 4\pi |x-y|.\\
          \end{align*}
    By Radamacher's theorem, this implies that the density $p_r(x)$ is bounded by $4\pi$. Now using \eqref{bobkov eq:bound1} and \eqref{bobkov eq:bound2} from Theorem \ref{bobkov et al Theorem}, we see that 
    \begin{align*}
         \sup_{|t|> \delta} |\nu_r(t)| < \max \left\{ 1- \frac{c_116\pi^2}{\sigma(r)},1- \frac{c_2t^2}{16\pi^2} \right\} < 1- \min  \left\{ \frac{c_116\pi^2}{\sup_{\frac{1}{2}\leq r \leq 1} \sigma(r)}, \frac{c_2\delta^2}{16\pi^2} \right\}.
    \end{align*}
   Since the right-hand side of the above equation does not depend on $r$, we may choose 
\[
\eta = \min \left\{ \frac{c_1 16\pi^2}{\sup_{\tfrac{1}{2}\leq r \leq 1} \sigma(r)}, \ \frac{c_2 \delta^2}{16\pi^2} \right\},
\]
which completes the proof.
\end{proof}
\begin{figure}
    \centering
    \includegraphics[width = 0.48\textwidth]{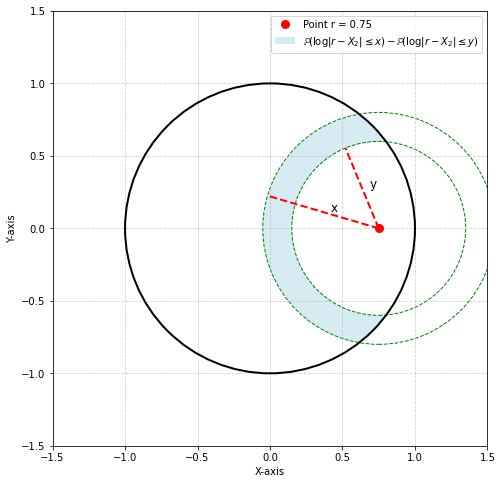}
    \includegraphics[width = 0.5\textwidth]{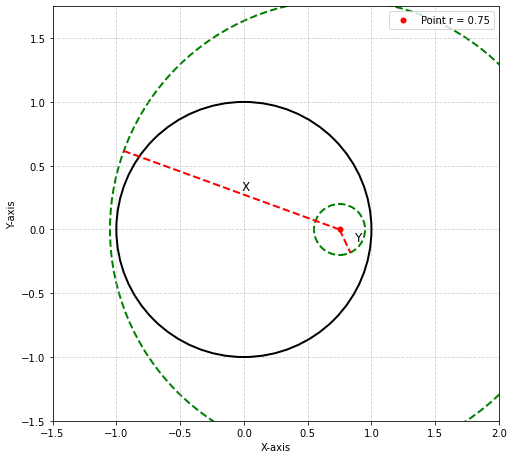}
    \caption{}\label{fig: density of log|r-X_2|}
    \end{figure}

\begin{Lemma}\label{gauss-comp}
    Let $Z$ be a normal random variable with mean $0$ and variance $1$. Then, for a sequence $C_n\geq 0$ such that $\sqrt{n}C_n \to 0$, with  $C_r(n)$ given by \eqref{some-nota}, 
    \[ \lim_{n\to \infty} \sqrt{n}\int_{1- \kappa \sqrt{\log n/n}}^1 \mb{P}[Z > C_r(n)]rdr = \frac{\sigma(1)}{\sqrt{2\pi}}. \]
\end{Lemma}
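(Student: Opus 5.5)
We change variables $r = 1 - s/\sqrt{n}$, so that $s$ ranges over $[0,\kappa\sqrt{\log n}]$ and $dr = ds/\sqrt{n}$. This turns the quantity of interest into
\[
\sqrt{n}\int_{1-\kappa\sqrt{\log n/n}}^1 \mb{P}[Z>C_r(n)]\,r\,dr = \int_0^{\kappa\sqrt{\log n}} \mb{P}\bigl[Z > C_{1-s/\sqrt{n}}(n)\bigr]\Bigl(1-\tfrac{s}{\sqrt{n}}\Bigr)ds .
\]
Since $u(r)=\frac{r^2-1}{2}$, we have
\[
C_r(n) = \frac{\sqrt{n}C_n + \sqrt{n}(1-r^2)/2}{\sigma(r)}.
\]
With $r=1-s/\sqrt n$ this becomes
\[
C_r(n)=\frac{\sqrt{n}C_n + s - s^2/(2\sqrt{n})}{\sigma(1-s/\sqrt n)} .
\]
For each fixed $s\ge 0$, the hypothesis $\sqrt{n}C_n\to 0$ gives $C_r(n)\to s/\sigma(1)$, provided $\sigma$ is continuous at $r=1$. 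The pointwise limit of the integrand is therefore $\mb{P}[Z>s/\sigma(1)]$. Its integral over $[0,\infty)$ is $\sigma(1)\,\E[Z_+]=\sigma(1)/\sqrt{2\pi}$, which is exactly the claimed constant.

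To justify the exchange of limit and integral, we use dominated convergence on $[0,\infty)$, treating the integrand as zero beyond $\kappa\sqrt{\log n}$. We need an integrable majorant, uniform in $n$.

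\begin{itemize}
\item On the range, $s\le \kappa\sqrt{\log n}$, so $s^2/(2\sqrt n)\le s\cdot \kappa\sqrt{\log n}/(2\sqrt n)\le s/2$ for large $n$. Since $C_n\ge0$, this gives $C_r(n)\ge s/(2\sup_{[1/2,1]}\sigma)$.
\item The variance $\sigma(r)$ is bounded above (and below) uniformly for $r\in[1/2,1]$, as used in the proof of Lemma \ref{estimates for the characteristic fuction using bobkov et al}.
\end{itemize}

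Hence the integrand is bounded by $\mb{P}[Z> c s]$ for a fixed $c>0$, which is integrable on $[0,\infty)$. The factor $(1-s/\sqrt n)$ lies in $[0,1]$ and tends to $1$.

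The only point needing care is the continuity of $\sigma(r)$ at $r=1$, i.e.\ that $\Var(\log|r-X_1|)\to\Var(\log|1-X_1|)$. I would argue this by dominated convergence applied to the first and second moments. By the explicit distribution \eqref{density of log|r-X_2|} and the Lipschitz bound on the distribution function, the family $(\log|r-X_1|)^2$ is uniformly integrable: it has uniformly bounded $p$-th moments for any $p$, since the density is bounded by $4\pi$ near the singularity and the tail there is $e^{2x}$. Together with pointwise a.s.\ convergence this gives convergence of the moments. Alternatively, one can cite the continuity of the second moment already invoked from \cite[lemma 2.3]{Ghosh-numberofcomponents}. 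This is the main (mild) obstacle; the rest is bookkeeping.
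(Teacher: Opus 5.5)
Your proof is correct and follows essentially the paper's argument. Both rescale near $r=1$ by a change of variables and then apply dominated convergence to reach $\int_0^\infty \mathbb{P}[Z>s/\sigma(1)]\,ds=\sigma(1)/\sqrt{2\pi}$. The paper's substitution $y=\sqrt{n}(1-r^2)/2$ absorbs the factor $r$ exactly, while your $s=\sqrt{n}(1-r)$ leaves a harmless factor $1-s/\sqrt{n}$. You also spell out the integrable majorant and the continuity of $\sigma$ at $r=1$, both of which the paper leaves implicit.
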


\begin{proof}
    Using the change of variable $y= y(r) = \sqrt{n}(1-r^2)/2$, we get 
    \begin{align*}
        &\sqrt{n}\bigintss_{ 1- \kappa \sqrt{\frac{\log n}{n}}}^1 \mathbb{P}\left( Z > C_r(n)\right)rdr \nonumber \\[.75em]
        & = \bigintss_{0}^{y(1- \kappa \sqrt{\log n/n})} \mathbb{P}\left( Z > (\sqrt{n}C_n + y)\bigl/\sigma(\sqrt{1- 2y/\sqrt{n}})\right)dy.\nonumber
    \end{align*}
    Note that $y(1- \kappa \sqrt{\log n/n}) \to \infty$ and by the dominated convergence theorem, the above integral converges to 
    \begin{align*}
        \int_0^\infty \mb{P}(Z > y/\sigma(1))dy = \sigma(1) \int_0^\infty \mb{P}(Z > y)dy = \frac{\sigma(1)\mb{E}[|Z|]}{2} = \frac{\sigma(1)}{\sqrt{2\pi}}. \quad \quad \quad\quad \quad \quad \qedhere
    \end{align*} 
\end{proof}

\begin{proof}[Proof of Proposition \ref{edgeworth-prop}]
    Using the rotational symmetry of the random variables $X_1, X_2, ...$ and polar coordinates, we get
\begin{align}\label{proposition for main term}
    &\mathbb{P}\left(\log|P_n(X_0)| > n C_n, X_0 \in \mb{A}_n\right) \nonumber  \\ 
    &=\frac{1}{\pi}\bigintsss_{0}^{2\pi}\bigintsss_{ 1- \kappa\sqrt{\frac{\log n}{n}}}^1 \mathbb{P}\left(\log|P_n(re^{i \theta})| > n C_n\right) rdrd\theta \nonumber \\
    &=2\bigintsss_{ 1- \kappa\sqrt{\frac{\log n}{n}}}^1 \mathbb{P}\left({\sum_{k=1}^n\log|r-X_k|} >nC_n\right) rdr.
\end{align}
Using the notation \eqref{some-nota}, the above integral transforms into 
\begin{align}\label{before applying edgeworth}
   2 \bigintsss_{ 1- \kappa\sqrt{\frac{\log n}{n}}}^1 \mathbb{P}\left(\frac{\sum_{k=1}^n(\log|r-X_k|- u(r))}{\sqrt{n}\sigma(r)} > C_r(n)\right)rdr.
\end{align}
We now apply Theorem \ref{edgeworth expansion Theorem} with $Y_k = Y_k(r) = \log|r-X_k|- u(r)$ to estimate the probability inside the integral in \eqref{before applying edgeworth}. It can be easily checked that 
$\sup_{r\in [0,1]}\mb{E}[|Y_k(r)|^4]$ is finite, see e.g. \cite[Lemma 2.3]{Ghosh-numberofcomponents}. Hence, from Theorem \ref{edgeworth expansion Theorem}, we obtain 
\begin{align}\label{main probability bound after using the edgeworth expansion}
    \left| F_{n}(x)- \Phi(x)- \frac{Q_{1}(x)}{\sqrt{n}}\right| \leq C \left( E_1(r,n)+ E_2(r,n)\right),
\end{align}
where
\begin{align*}
    &E_1(r,n)\\ &:=\frac{1}{\sigma^3 \sqrt{n}} \frac{1}{(1+|x|)^3} \int_{|y|\geq \sigma\sqrt{n}(1+|x|)} |y|^3 dF(y)+\frac{1}{\sigma^4 n}\frac{1}{(1+|x|)^4} \int_{|y|< \sigma\sqrt{n}(1+|x|)}  |y|^4 dF(y),\\[1em]
    &E_2(r,n):=\left( \sup_{|t|\geq \delta(r)} |\nu_r(t)|+\frac{1}{2n}\right)^n  \frac{n^6}{(1+|x|)^4},
\end{align*}
    where $\delta(r)=\sigma(r)^2/12\mb{E}[|Y_1|^3]$. \\
    
    We show that $E_1(r,n)$ and $E_2(r,n)$ are $O(1/n)$ uniformly in $r\in [0,1]$. For $E_1(r,n)$, 
\begin{align}\label{bound on E_1(r,n)}
    &|E_1(r,n)| \\
    & = \frac{1}{\sigma^3 \sqrt{n}} \frac{1}{(1+|x|)^3} \int_{|y|\geq \sigma\sqrt{n}(1+|x|)} \frac{|y|4}{|y|} dF(y)+\frac{1}{\sigma^4{n}} \frac{1}{(1+|x|)^4} \int_{|y|< \sigma\sqrt{n}(1+|x|)}  |y|^4 dF(y) \nonumber\\[1em]
    &\leq \frac{1}{\sigma^4 {n}} \frac{1}{(1+|x|)^4} \int_{|y|\geq \sigma\sqrt{n}(1+|x|)}|y|^4 dF(y)+\frac{1}{\sigma^4{n}}\frac{1}{(1+|x|)^4} \int_{|y|< \sigma\sqrt{n}(1+|x|)}  |y|^4 dF(y) \nonumber\\[1em]
     &\leq \frac{2}{\sigma^4 {n}(1+|x|)^4} \int_{-\infty}^{\infty}|y|^4 dF(y)  \nonumber\\[1em]
     &\leq \frac{C}{{n}},
\end{align}

    where in the last line we have used \begin{align}\label{bound on sigma}
        0< \inf_{0\leq r \leq 1} \sigma(r) \leq \sup_{0\leq r \leq 1} \sigma(r) < \infty.
    \end{align}
    which is an easy consequence of the continuity of $\sigma(r)$. \\

    Now for $E_2(r,n)$, we similarly note that $\delta := \inf_{r\in [0,1]} \delta(r) >0$. Hence, using Lemma \ref{estimates for the characteristic fuction using bobkov et al}, 
    \begin{align}\label{bound on E_2(r,n)}
        E_2(r,n) \leq n^6 \left( \sup_{|t|\geq \delta} |\nu_r(t)|+\frac{1}{2n}\right)^n \leq  n^6 \left( 1 - \eta +\frac{1}{2n}\right)^n = O\biggl(\frac{1}{n}\biggr).
    \end{align}
      
It follows from above that the expression in \eqref{before applying edgeworth} satisfies 
\begin{align}\label{afermath-edgeworth}
   2 \bigintsss_{ 1- \kappa\sqrt{\frac{\log n}{n}}}^1 &\mathbb{P}\left(\frac{\sum_{k=1}^n(\log|r-X_k|- u(r))}{\sqrt{n}\sigma(r)} > C_r(n)\right)rdr  \nonumber \\ &=  2 \bigintsss_{ 1- \kappa\sqrt{\frac{\log n}{n}}}^1 (1- \Phi(C_r(n)))rdr + 2 \bigintsss_{ 1- \kappa\sqrt{\frac{\log n}{n}}}^1 \frac{Q_1(C_r(n))}{\sqrt{n}}rdr + O\left(\frac{1}{n} \right),
\end{align}
which finishes the proof of \eqref{Area-1}. The claim \eqref{Area-2} easily follows from \eqref{Area-1}, Proposition \ref{location of the components of the lemniscate: lemma} and by noting that for $C_n=0$, 
\[ \bigintsss_{ 1- \kappa\sqrt{\frac{\log n}{n}}}^1 \frac{Q_1(C_r(n))}{\sqrt{n}}rdr \lesssim \frac{1}{n}. \]
Finally, the convergence \eqref{gauss-integral-converge} follows from Lemma \ref{gauss-comp}.
\end{proof}

\begin{remark} \label{nice-informal-remark}
    One infers from the proof above that the asymptotic $\mb{E}[Area(\mb{D}\setminus \Lambda_n)] \sim \frac{C}{\sqrt{n}}$ is an outcome of an interpolation between the large deviation regime and mean behaviour regime. More precisely, since $\mb{E}[ \log|z-X_1|] = (1-|z|^2)/2 < 0$ for each fixed $z \in \mb{D}$, the event $\{\log |P_n(z)| > 0 \}$ is a large deviation event. By Cram\'er's theorem, this event will have exponentially small probability for each fixed $z \in \mb{D}$. But, for $z\in \partial \mb{D}$, since $\mb{E}[ \log|z-X_1|] = 0$, $\{\log |P_n(z)| > 0 \}$ is a mean behaviour type event which will have $\Theta(1)$ probability. The asymptotics $\mb{E}[Area(\mb{D}\setminus \Lambda_n)] \sim C\sqrt{n}$ fall out of computation as $z$ interpolates between the interior of the unit disc and the boundary of the unit disc. As a by product it follows that only $z$ at a distance $x/\sqrt{n}, x \in (0,\infty),$ from $\partial \mb{D}$ contribute positively in the asymptotics of $\mb{E}[Area(\mb{D}\setminus \Lambda_n)]$.

    \end{remark}

\section{A Large Deviated anticoncentration/small ball estimate}\label{section:A Large Deviated anticoncentration/small ball estimate}

In this section, we prove a large deviation estimate in combination with an anticoncentration/small ball estimate. For $r\in (0,1)$, let $Y_k = Y_k(r) = r - \Re(1/(r-X_k))$ and let $W_n = W_n(r) = \sum_{k=1}^n Y_k(r)$ be the random walk generated by $Y_k(r)$. For sequences $a_n,b_n$ of positive real numbers such that $a_n \leq b_n$ and $a_n, b_n = \Theta(n)$, we want to estimate 
\begin{equation}\label{we-want-this}
    \mb{P}\left(W_n(r) \in [a_n,b_n]\right).
\end{equation} 
Note from Lemma \ref{lemma: p<2 moment bound for uniform on disk}, $W_n(r)$ is a mean-zero random walk, and the law of large numbers implies $W_n(r)/n \to 0$ almost surely as $n\to \infty$. Hence, the event $\{W_n(r) \in [a_n, b_n]\}$ is a large deviation event. Moreover, we also want to account for the fact that $W_n(r)$ is squeezed in the interval $[a_n,b_n]$, which further reduces the probability of the large deviation event \eqref{we-want-this}: this is the context of anti concentration inequalities/small ball estimates. Furthermore, we also want uniformity of our estimates with respect to the variable $r$, see Section \ref{last-analysis-of-remainder} below for the basis of these aspirations. The following proposition which could be of independent interest gives us the required estimate.

    \begin{prop}\label{dev-anti-conc}

Let $a_n,b_n = \Theta(n)$, $a_n \leq b_n$ be two sequences. Then, for any $\alpha, \beta \in (1/2,1)$ there exists a constant $C$ depending on $\alpha, \beta,$ and the sequences $a_n,b_n$ such that for all 
$r \in \left[0,  1-\frac{1}{n^{\alpha}}\right]$ 
\begin{align}\label{claim-anticoncentration}
     \mb{P}(W_n(r) \in [a_n,b_n]) \leq C \biggl( \frac{b_n-a_n}{n^2} + \frac{\log n}{n} \exp\bigl(-\frac{(\log n)^{1-\beta}}{2}\bigr)\biggr).
\end{align}
    \end{prop}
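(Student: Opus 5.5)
The plan is a \emph{one big jump} analysis: since $\mb{E}[Y_k(r)]=0$ and the tail of $Y_k(r)$ is regularly varying of index $-2$, the event $\{W_n(r)\in[a_n,b_n]\}$ should be realized, up to small error, by exactly one summand of size $\Theta(n)$ while the others stay moderate. The anticoncentration factor $(b_n-a_n)$ will then come from the bounded density of that one large summand.

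\textbf{Step 1: uniform tail and density bounds.} I would first record the exact geometry of $Y(r)=r-\Re(1/(r-X))$. For $y>r$, the set $\{\Re(1/(r-X))<r-y\}$ is the open disc of diameter $1/(y-r)$ tangent at $r$ to the vertical line through $r$, lying on the side $\Re(X)>r$. If $1/(y-r)\le 1-r$, this disc lies inside $\mb{D}$. Hence
\[
\mb{P}(Y>y)=\frac{1}{4(y-r)^2}
\]
exactly, and the density of $Y$ is $\frac{1}{2(y-r)^3}$ on that range. For $r\le 1-n^{-\alpha}$ this range contains all $y\ge 2n^{\alpha}$, and in particular all $y$ of order $n$, uniformly in $r$. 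For every $y>0$ the same disc picture also gives the crude bound $\mb{P}(|Y|>y)\le C/y^2$ uniformly in $r$, and $Y$ is treated symmetrically on the lower side. It follows that $\mb{E}[Y^2\mathbbm{1}_{|Y|\le\tau}]\le C\log\tau$ and $|\mb{E}[Y\mathbbm{1}_{Y\le \tau}]|\le C/\tau$, both uniformly in $r$.

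\textbf{Step 2: decomposition.} Fix a truncation level $\tau=\tau_n=c\,n/(\log n)^{1-\beta}$, where $c$ is small relative to $\liminf a_n/n$. The event splits into three cases.
\begin{enumerate}[label=(\roman*)]
\item At least two indices have $Y_k>\tau$. A union bound gives probability at most $n^2(C/\tau^2)^2=O((\log n)^{4}/n^2)$, which is absorbed into the right side of \eqref{claim-anticoncentration}.
\item No index has $Y_k>\tau$, yet $W_n\ge a_n$.
\item Exactly one index, say $k=1$ after paying a factor $n$, has $Y_1>\tau$, and $W_n\in[a_n,b_n]$. This case is split according to whether $W_n':=\sum_{k\ge2}Y_k\le a_n/2$ or not.
\end{enumerate}
In subcase (iii) with $W_n'\le a_n/2$, I condition on $X_2,\dots,X_n$. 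Then $Y_1$ must lie in the interval $[a_n-W_n',\,b_n-W_n']$, which has length $b_n-a_n$ and sits inside $[a_n/2,\infty)$. On that range Step 1 gives density at most $C/n^3$, uniformly in $r$. This yields $n\cdot(b_n-a_n)\cdot C/n^3=C(b_n-a_n)/n^2$, the first term of \eqref{claim-anticoncentration}. In subcase (iii) with $W_n'>a_n/2$, independence gives the bound $n\,\mb{P}(Y_1>\tau)\,\mb{P}(W_n'>a_n/2)\le C(\log n)^{2-2\beta}n^{-1}\,\mb{P}(W_n'>a_n/2)$. So it suffices to show that $\mb{P}(W'_n>a_n/2)$ is small, which is a case (ii)-type estimate once one recursively discards a second big jump using (i).

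\textbf{Step 3: the truncated large deviation (main obstacle).} For case (ii) and for $\mb{P}(W_n'>a_n/2)$ restricted to no jump above $\tau$, I apply Bennett's inequality (Theorem \ref{Bennett's inequality}) to the variables $Y_k\mathbbm{1}_{Y_k\le\tau}$. These are bounded above by $b=\tau$, have variance proxy $\nu\le Cn\log n$, and have mean $O(1/\tau)$, so the centering costs only $O(n/\tau)=o(n)$. With $t\asymp n$ the exponent is approximately
\[
\frac{t}{b}\log\Bigl(1+\frac{bt}{\nu}\Bigr)\gtrsim (\log n)^{1-\beta}\log\frac{n}{(\log n)^{2-\beta}},
\]
which is far more than needed. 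The delicate point is bookkeeping: tuning $\tau$ against $\beta$ so that case (i) and the prefactor from (iii) stay within $\frac{\log n}{n}\exp(-(\log n)^{1-\beta}/2)$, while keeping every constant uniform in $r\in[0,1-n^{-\alpha}]$. The hypothesis $\alpha>1/2$ enters through Step 1, ensuring that the exact cubic density bound holds at scale $n$, and I expect the uniformity of the variance and mean estimates near $r=1-n^{-\alpha}$ to need the most care.
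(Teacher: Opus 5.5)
Your argument is correct apart from one fixable slip (below), and it takes a genuinely different route from the paper.

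\textbf{Comparison with the paper.} The paper never works with the window event directly. It proves two tail estimates and subtracts them at $c_n=a_n$ and $c_n=b_n$:
\begin{itemize}
\item an upper bound $\mathbb{P}(W_n(r)>c_n)\le n\,\mathbb{P}(Y_1(r)>(1-\epsilon_n)c_n)+C\frac{\log n}{n}e^{-(\log n)^{1-\beta}/2}$, via truncation at $(1-\epsilon_n)c_n$ and Bennett;
\item a lower bound $\mathbb{P}(W_n(r)>c_n)\ge n\,\mathbb{P}(Y_1(r)>c_n+n^{\delta})-Cn^{-\delta p}$, via Bonferroni and von Bahr--Esseen.
\end{itemize}
You instead split $\{W_n(r)\in[a_n,b_n]\}$ by the number of summands above a level $\tau\ll n$. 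You then extract the factor $b_n-a_n$ from the conditional density $\le C/n^3$ of the single big jump.

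The paper's route gives the tail asymptotics $\mathbb{P}(W_n>c_n)\approx n\,\mathbb{P}(Y_1>c_n)$ as a by-product. However, the subtraction does not cancel the relative errors:
\[
n\bigl[\mathbb{P}(Y_1>(1-\epsilon_n)a_n)-\mathbb{P}(Y_1>b_n+n^{\delta})\bigr]\asymp \frac{b_n-a_n+\epsilon_n a_n+n^{\delta}}{n^2}.
\]
This leaves a term of order $1/(n(\log n)^{\beta})$. That term is not dominated by the right side of \eqref{claim-anticoncentration} when $b_n-a_n\ll n(\log n)^{-\beta}$, for instance in the application, where $b_n-a_n=2n^{3/4+\epsilon}$. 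The weaker bound still suffices for \eqref{marginally-better}, because $\beta>1/2$.

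Your route has no such loss. All leftover terms are $O((\log n)^4/n^2)$ or super-polynomially small, so you actually obtain $C\bigl(b_n-a_n+(\log n)^4\bigr)/n^2$, which implies \eqref{claim-anticoncentration}. For the same reason, the bookkeeping you flag as delicate is not delicate:
\begin{itemize}
\item no tuning of $\tau$ against $\beta$ is needed;
\item uniformity in $r$ is automatic, since $\mathbb{P}(|Y(r)|>y)\le C/y^2$ holds for every $r\in[0,1)$;
\item $\alpha$ is used only to have the exact density $\frac{1}{2(y-r)^3}$ on $[a_n/2,\infty)$.
\end{itemize}

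\textbf{The slip, in Step 3.} Bennett cannot be applied to $Y_k\mathbbm{1}_{\{Y_k\le\tau\}}$ with $\nu\le Cn\log n$. The left tail $\mathbb{P}(Y\le t)=\frac{1}{4(r-t)^2}$ makes $\mathbb{E}[Y^2\mathbbm{1}_{\{Y\le\tau\}}]=\infty$. The fix is the pointwise bound $Y_k\mathbbm{1}_{\{Y_k\le\tau\}}\le Y_k\mathbbm{1}_{\{|Y_k|\le\tau\}}$. Apply Bennett to the two-sided truncation instead: it is bounded above by $\tau$, has second moment $\le C\log\tau$, and has mean $O(1/\tau)$, exactly as in \eqref{truncate-and-bennett}.

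A minor simplification: in subcase (iii) with $W_n'>a_n/2$, the summands $Y_k$, $k\ge2$, are already $\le\tau$. So $\mathbb{P}(W_n'>a_n/2,\ \max_{k\ge2}Y_k\le\tau)$ is bounded directly by the same Bennett estimate, and no recursive step is needed.
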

    
The proof of the above proposition is done in the following subsections, where we note down some facts about the law of random variables $Y_k(r)$ and then obtain the required upper and lower bounds. At the heart of the proof of Proposition \ref{dev-anti-conc} is the large deviation theory in the regime of heavy-tailed random walks. In particular, it uses ideas from Cline-Hsing \cite{cline-hsing} but also brings in the new idea (which is using  Bennett's inequality, c.f. Theorem    \ref{Bennett's inequality} ), which significantly simplifies the implementation of the approach in \cite{cline-hsing} to our setting.

\subsection{\texorpdfstring{The distribution of $Y_k(r)$}{The distribution of Yk(r)}}

The law of $Y_k(r)$ exhibits Pareto-type tails. In fact, the density of $Y_k(r)$ is a compact perturbation of the density of a symmetric Pareto-type law, see Remark \ref{remark-on-tail} below. More precisely,
\begin{Lemma}\label{distribution of Y ;lemma} 
For $r \in (0,1)$, the distribution function of $Y_k(r)$ satisfies  
\begin{align}\label{distribution of Y}
    F_r(t) = \mathbb{P}(Y_k(r) \leq t ) = 
    \begin{cases}
         \quad\frac{1}{4(r-t)^2},  \quad &\textit{if}  \quad  \quad  -\infty < t \leq r - \frac{1}{1+r}\\ 
         \quad1- \frac{1}{4(r-t)^2},  \quad \quad &\textit{if}  \quad \quad   r + \frac{1}{1-r} \leq t < \infty,
    \end{cases}
\end{align}\label{distribution of Y 2}
and, for $r< t < r + \frac{1}{1-r}$ 
\begin{align}\label{loose-up}
     \mathbb{P}(Y_k(r) > t ) \leq 1 \wedge \frac{1}{4(t-r)^2}.
\end{align}
\begin{figure}
    \centering
    \includegraphics[width = 0.445\textwidth]{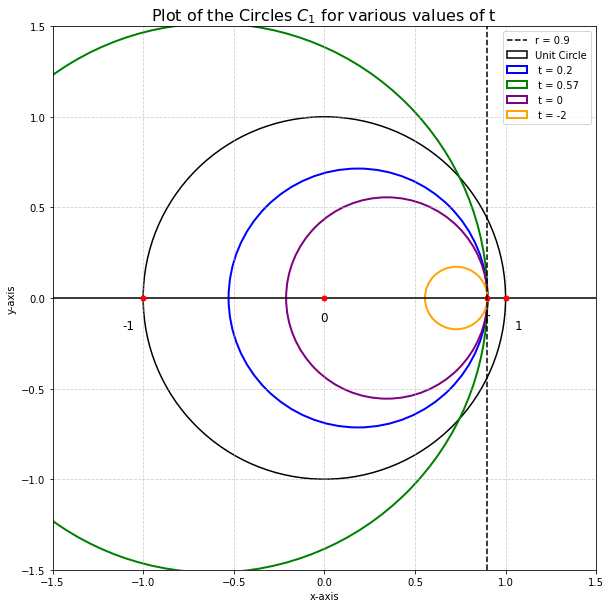}
    \includegraphics[width = 0.51\textwidth]{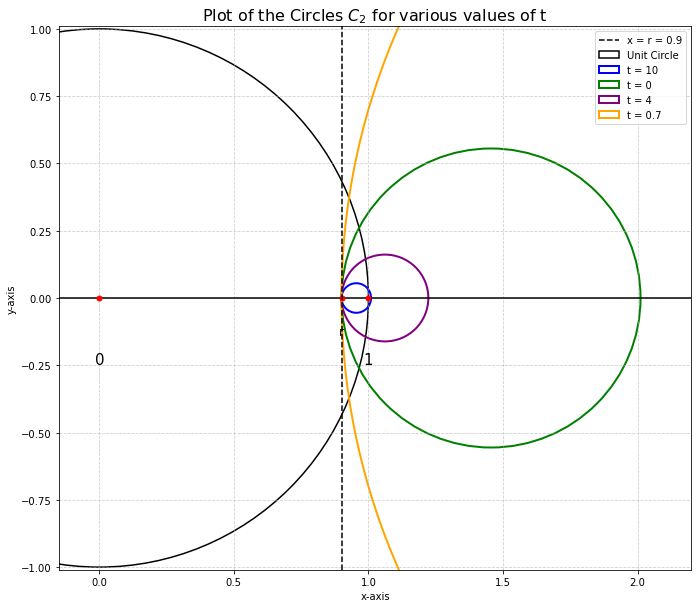}
    \caption{The unit circle is shown in black, together with curves $\mathcal{C}_t$ for various values of $t$, corresponding to the tail probabilities of $Y_r(k)$: left-sided tail (left) and right-sided tail (right).
    }\label{fig: density of Y_r(k)}
    \end{figure}
As a consequence, there exists absolute constants $C_1, C_2, >0$ such that for all $m \geq 2 $, 

\begin{equation}\label{mean-bound}
    \biggl|\int_{-m}^m x dF_r(x)\biggr| \leq \frac{C_1}{m}, 
\end{equation}
and 
\begin{equation}\label{var-bound}
   C_2 \log m \leq   \biggl|\int_{-m}^m x^2 dF_r(x)\biggr| \leq C_1 \log m.
\end{equation}
    
\end{Lemma}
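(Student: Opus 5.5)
We will read off the law of $Y_k(r)$ from a M\"obius change of variables. Put $w=r-X_k$, so that $w$ is uniform on the disc $B(r,1)$ and $Y_k(r)=r-\Re(1/w)$. For $s\neq 0$, the set $\{w:\Re(1/w)>s\}$ is the image of the half plane $\{\Re\zeta>s\}$ under $\zeta\mapsto 1/\zeta$. It is therefore the open disc $D_s$ of centre $1/(2s)$ and radius $1/(2|s|)$; this disc passes through $0$ and lies on the side of the imaginary axis determined by the sign of $s$. Hence every tail probability of $Y_k(r)$ equals $\frac{1}{\pi}\mathrm{Area}(D_s\cap B(r,1))$ for a suitable $s$. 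The plan is to decide when $D_s\subset B(r,1)$, where this area is exactly $\pi/(4s^2)$, and to use the trivial bound $\mathrm{Area}(D_s\cap B(r,1))\le \min\{\pi,\pi/(4s^2)\}$ otherwise.

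\emph{Left tail.} We have $\{Y\le t\}=\{\Re(1/w)\ge r-t\}$. Set $s=r-t>0$ and $\rho=1/(2s)$. The disc $D_s$ has centre $\rho$ and radius $\rho$, so $D_s\subset B(r,1)$ iff $|\rho-r|+\rho\le 1$. If $\rho\ge r$ this reads $1/s-r\le 1$, i.e.\ $s\ge 1/(1+r)$. If $\rho<r$ it reads $r\le 1$, which always holds; moreover $\rho<r$ already forces $s>1/(2r)\ge 1/(1+r)$. So for $t\le r-\frac{1}{1+r}$ the disc is contained in $B(r,1)$ and $F_r(t)=\frac{1}{4(r-t)^2}$. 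Boundary circles have measure zero, so strict versus non-strict inequalities do not matter.

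\emph{Right tail.} We have $\{Y>t\}=\{\Re(1/w)<-(t-r)\}$. Set $s'=t-r>0$; the relevant disc has centre $-1/(2s')$ and radius $1/(2s')$. It lies in $B(r,1)$ iff $r+1/s'\le 1$, i.e.\ $t\ge r+\frac{1}{1-r}$, and this gives $1-F_r(t)=\frac{1}{4(t-r)^2}$. For $r<t<r+\frac1{1-r}$ the trivial area bound gives \eqref{loose-up}.

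\emph{Moment bounds.} Combining the two tails, for all $x>0$ and all $r\in(0,1)$ we get $\mb{P}(|Y-r|>x)\le \min\{1,1/(2x^2)\}$. Since $0<r<1$, this yields $\mb{P}(|Y|>x)\le C/x^2$ for $x\ge 2$, uniformly in $r$.

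For \eqref{mean-bound}, Lemma \ref{lemma: p<2 moment bound for uniform on disk} gives $\mb{E}[1/(r-X)]=r$, so $\mb{E}[Y]=0$. Therefore
\[\int_{-m}^m x\,dF_r(x)=-\mb{E}[Y\mathbbm{1}_{\{|Y|>m\}}],\]
and its absolute value is at most
\[m\,\mb{P}(|Y|>m)+\int_m^\infty \mb{P}(|Y|>x)\,dx\le C/m .\]

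For the upper bound in \eqref{var-bound},
\[\int_{-m}^m x^2\,dF_r(x)\le \int_0^m 2x\,\mb{P}(|Y|>x)\,dx\le C+C\int_2^m \frac{dx}{x}\le C\log m ,\]
using $m\ge 2$.

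For the lower bound we use the exact left tail. We have $r-\frac1{1+r}\ge -1$, so for every $x\ge 1$ the point $t=-x$ lies in the exact regime. Hence
\[\mb{P}(Y<-x)=\frac{1}{4(r+x)^2}\ge \frac{1}{16x^2}\qquad (x\ge 1),\]
and this is uniform in $r$. It follows that
\[\int_{-m}^m x^2\,dF_r\ge \int_1^m 2x\bigl(\mb{P}(Y<-x)-\mb{P}(Y<-m)\bigr)\,dx\ge c\log m-C\ge C_2\log m\]
for $m\ge 2$, after adjusting constants; the subtracted term $\mb{P}(Y<-m)\int_1^m 2x\,dx\le m^2/(4m^2)$ is bounded.

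The only delicate points are keeping every constant independent of $r$ and checking the containment conditions. The intermediate region in the right tail is never needed exactly, because there only upper bounds are used.
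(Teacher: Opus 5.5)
Your proof is correct in substance and follows the paper's route. The paper also writes $\{Y_k(r)\le t\}$ and $\{Y_k(r)>t\}$ as explicit discs; it does so in the $X$-variable rather than your translated variable $w=r-X_k$. It obtains the containment thresholds $t\le r-\frac{1}{1+r}$ and $t\ge r+\frac{1}{1-r}$ exactly as you do, and uses the trivial area bound for \eqref{loose-up}. It then gets \eqref{mean-bound} and \eqref{var-bound} by integration by parts together with $\mathbb{E}[Y_1(r)]=0$, which is what your tail-integral computations amount to. One small slip in the set-up: your general remark that $\{w:\Re(1/w)>s\}$ is a disc is false for $s<0$, where it is the exterior of a disc. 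This is never used, since in the right tail you correctly work with $\{\Re(1/w)<-s'\}$.

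There is one step that needs repair: the final step of the lower bound in \eqref{var-bound} fails as written near $m=2$. Your estimate gives $\frac18\log m-\frac14$, which is negative for $m<e^2$. So no $C_2>0$ makes $c\log m-C\ge C_2\log m$ hold on all of $[2,\infty)$, and "adjusting constants" is not enough. You need a separate uniform positive bound on a bounded range of $m$. The map $m\mapsto\int_{-m}^m x^2\,dF_r(x)$ is nondecreasing, and $-1,-2$ lie in the exact regime. Hence for every $m\ge 2$, uniformly in $r\in(0,1)$,
\[\int_{-m}^m x^2\,dF_r(x)\ \ge\ \mathbb{P}(-2\le Y\le -1)=\frac{1}{4(r+1)^2}-\frac{1}{4(r+2)^2}=\frac{2r+3}{4(r+1)^2(r+2)^2}\ \ge\ \frac{1}{48}.\]
This covers $2\le m\le e^4$. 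For $m\ge e^4$ your estimate gives at least $\frac{1}{16}\log m$. Together these show that $C_2=\frac{1}{192}$ works for all $m\ge 2$.
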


\begin{proof}
    let $X_k= x+iy$, then $Y_k(r)=r-\frac{r-x}{(r-x)^2+y^2}$. For $t<r,$ a simple algebraic manipulation shows that
    \begin{align*}
        \mb{P}(Y_k(r)\leq t) =& \frac{1}{\pi} \mbox{Area}\left(r-\frac{r-x}{(r-x)^2+y^2} \leq t ; x^2+y^2 \leq 1 \right) \\
        =&  \frac{1}{\pi} \mbox{Area}\left( \Bigl(x+\frac{1}{2(r-t)}-r \Bigr)^2 +y^2 \leq \frac{1}{4(r-t)^2} ; x^2+y^2 \leq 1 \right). 
    \end{align*}
    That is, the probability is given by the normalized area of the intersection of the two disks ( c.f. Figure \ref{fig: density of Y_r(k)}).
\[
\mathcal{C}_t:=\left\{ \left(x + \frac{1}{2(r-t)} - r\right)^{2} + y^{2} \leq \frac{1}{4(r-t)^{2}} \right\} 
\quad \text{and} \quad 
\{x^{2}+y^{2} \leq 1\}.
\]  
For $t \leq r - \tfrac{1}{1+r}$, we observe that $\frac{1}{2(r-t)} \leq \frac{1+r}{2}$.  
This inequality shows that the disk 
\[
\left\{ \left(x + \frac{1}{2(r-t)} - r\right)^{2} + y^{2} \leq \frac{1}{4(r-t)^{2}} \right\} 
\]  
is completely contained inside the unit disk for $t \leq r - \tfrac{1}{1+r}$. Therefore,  
\[
\mb{P}(Y_k(r) \leq t) \;=\; \frac{1}{\pi} \, \mathrm{Area}\!\left( \left(x + \tfrac{1}{2(r-t)} - r\right)^{2} + y^{2} \leq \tfrac{1}{4(r-t)^{2}} \right) 
\;=\; \frac{1}{4(r-t)^{2}}.
\]
A similar computation yields the right tail of the distribution in \eqref{distribution of Y}. The argument above also gives \eqref{loose-up}. The estimate \eqref{mean-bound} and \eqref{var-bound} follows easily by applying integration by parts formula and using \eqref{distribution of Y} and \eqref{loose-up} (note that $Y_1(r)$ is centered, i.e. $\int_{-\infty}^{\infty} xdF_r(x) =0$ and hence $\int_{|x|\leq m }xdF_r(x)= - \int_{|x| > m }xdF_r(x)$). 
\end{proof}

\begin{remark}\label{remark-on-tail}
    We do not explicitly compute the distribution function $F_r(t)$ for the compact range $t \in [r- \frac{1}{1+r}, r + \frac{1}{1-r}]$ since it is not so neat to write an explicit expression for it. Note that in this case, as illustrated by Figure \ref{fig: density of Y_r(k)}, we have to compute the common area within two intersecting circles. However, since this is a compact interval, this does not affect the tail of $Y_k(r)$ and we do not require its explicit form for all $t\in \mb{R}$. Note, however, that the density of $Y_k(r)$ is skew and not symmetric. In fact, for $r=1$, it is one sided and supported on $\bigl(-\infty,\frac{1}{2}\bigr)$.  
\end{remark}

\subsection{Upper bound on a large deviation event}

For a sequence $c_n >0$ such that $c_n = \Theta(n)$ (we will take $c_n = a_n$ or $c_n = b_n$), we obtain an upper bound on $\mb{P} [W_n(r) > c_n]$. \\ 

For $\beta \in (1/2, 1)$, let $\epsilon = \epsilon_n = (\log n)^{-\beta}$.
Let $m= m_n = (1-\epsilon_n)c_n$. Let us write $\mu_{m}(r) = \mb{E}[Y_1(r)1_{|Y_1(r)| \leq m }]$ and $ \nu_m(r) = \mb{E}[|Y_1(r)|^21_{|Y_1(r)| \leq m }]$. From Lemma \ref{distribution of Y ;lemma}, we have $\mu_m(r)\leq C/m$ and $C_2 \log m \leq \nu_m(r) \leq C_1 \log m $. Then, 

\begin{align}\label{truncate-and-bennett}
    \mb{P} [W_n(r) > c_n] &\leq \mb{P}[W_n(r) > c_n, \max_{k=1,2,..,n}Y_k(r) \leq m ] +  \mb{P}[W_n(r) > c_n, \max_{k=1,2,..,n}Y_k(r) >m] \nonumber \\
    &\leq \mb{P}[W_n(r) > c_n, \max_{k=1,2,..,n}Y_k(r) \leq m ] + n \mb{P}[Y_1(r) >m] \nonumber \\
    &\leq \mb{P}[ \sum_{k=1}^n Y_k(r)1_{Y_k(r) \leq m} > c_n] + n  \mb{P}[Y_1(r) > m] \nonumber\\
&\leq \mb{P}[ \sum_{k=1}^n Y_k(r)1_{|Y_k(r)| \leq m} > c_n] + n  \mb{P}[Y_1(r) > m] \\    
    &\leq  \mb{P}[ \sum_{k=1}^n (Y_k(r)1_{|Y_k(r)|\leq m} - \mu_m(r)) > c_n - n\mu_m(r)] + n  \mb{P}[Y_1(r) > m] \nonumber\\
& \leq \exp\biggl( - \frac{n \nu_m(r) }{m^2 } h\biggl(\frac{m(c_n- n \mu_m(r))}{n \nu_m(r) }\biggr)\biggr) + n  \mb{P}[Y_1(r) > m], \nonumber
\end{align}
where we have used Bennett's inequality (c.f. Theorem \ref{Bennett's inequality}) to go from the fifth line to the sixth line, and the function $h$ is as in that Theorem.

\vspace{2mm}

Now, note that since $c_n = \Theta(n)$, 
\[ \frac{n \nu_m(r) }{m^2 } \times \frac{m(c_n- n \mu_m(r))}{n \nu_m(r) } = O(1),\]
\[\frac{n \nu_m(r) }{m^2 } \times \log \biggl( 1 +  \frac{m(c_n- n \mu_m(r))}{n \nu_m(r) }\biggr) = o(1),\]
and 
\[ \frac{m(c_n- n \mu_m(r))}{n \nu_m(r) } = \Theta\biggl( \frac{n}{\log n}\biggr).\]
Hence, 
\begin{align}
  &\exp\biggl( - \frac{n \nu_m(r) }{m^2 } h\biggl(\frac{m(c_n- n \mu_m(r))}{n \nu_m(r) }\biggr)\biggr) \\ & \qquad\qquad \qquad\lesssim \exp\biggl( - \frac{n \nu_m(r) }{m^2 }\times \frac{m(c_n- n \mu_m(r))}{n \nu_m(r) } \log\biggl(1+ \frac{m(c_n- n \mu_m(r))}{n \nu_m(r) }\biggr)\biggr) \nonumber \\
    & \qquad\qquad \qquad=\exp\biggl( - \frac{c_n- n \mu_m(r)}{m}\times\log\biggl(1+ \frac{m(c_n- n \mu_m(r))}{n \nu_m(r) }\biggr)\biggr) \nonumber \\
    & \qquad\qquad \qquad\lesssim \exp\biggl( - \frac{c_n}{m}\times\log\biggl(\frac{n}{\log n}\biggr) \biggr) \nonumber,
    \end{align}
    where  $\mu_m(r) = O(1/m)$ has been used in the last line. Note that 
\begin{align}
    \exp\biggl( - \frac{c_n}{m}\times\log\biggl(\frac{n}{\log n}\biggr)\biggr) &= \exp\biggl( - \frac{1}{1-\epsilon_n}\times\log\biggl(\frac{n}{\log n}\biggr)\biggr) \nonumber\\
    &= \frac{\log n}{n}\exp\biggl( - \frac{\epsilon_n}{1-\epsilon_n}\times\log\biggl(\frac{n}{\log n}\biggr)\biggr) \nonumber\\
    & \leq \frac{\log n}{n}\exp\biggl(-\frac{\epsilon_n}{2}\times\log n \biggr) \nonumber\\
    & =\frac{\log n}{n}\exp\biggl(-\frac{1}{2}\times(\log n)^{1-\beta} \biggr).\nonumber
\end{align}

Plugging the above estimates in \eqref{truncate-and-bennett}, we obtain that for some constant $C$ depending on $\alpha, \beta \in (1/2, 1)$ and the sequence $c_n$, 
\begin{align}\label{crux-upper}
    \mb{P}[W_n(r) > c_n] \leq n \mb{P} (Y_1(r) > m_n)+ C \frac{\log n}{n}\exp\biggl(-\frac{1}{2}\times(\log n)^{1-\beta} \biggr). 
\end{align}

\subsection{Lower bound on a large deviation event}
For a sequence $c_n$ as in the previous section, we now obtain a lower bound on $\mb{P}[W_n(r) > c_n]$.\\

For any $\delta \in (1/2, 1)$,  
    \begin{align*}
        \mathbb{P}\Bigl(  W_n(r) > c_n \Bigr) \geq  \mathbb{P}\Bigl(  W_n(r) > c_n ; \bigcup_{k=1,\cdots,n} \{Y_k(r) > c_n+ n^{\delta}\} \Bigr).    
    \end{align*}
    Using Bonferroni's inequality, the right hand side of the above is further lower bounded by 
    \begin{multline}\label{bonferroni}
        \Bigg[ n \mathbb{P}\Bigl( W_n(r)> c_n ; Y_1(r) > c_n+ n^{\delta} \Bigr) \\
        - \sum_{i <j} \mathbb{P}\Bigl(  W_n(r) > c_n ; Y_j(r) > c_n+ n^{\delta}; Y_i(r) > c_n+ n^{\delta} \Bigr) \Bigg].
    \end{multline}
    Now, by independence, the first term in \eqref{bonferroni} is bounded below by
    \begin{align*}
        \mathbb{P}\Bigl( W_n(r)> c_n ; Y_1(r) > c_n+ n^{\delta} \Bigr) &\geq \mathbb{P}\Bigl(  \sum_{k=2}^n  Y_k(r) > -n^{\delta} ; Y_1(r) > c_n+ n^{\delta} \Bigr)\\
        &\geq \mathbb{P}\Bigl(  \sum_{k=2}^n  Y_k(r) > -n^{\delta} \Bigr) \mathbb{P}\Bigl( Y_1(r) > c_n+ n^{\delta} \Bigr).
    \end{align*}
    Since $ Y_k(r)$ are mean-zero independent random variables, using Lemma \ref{lemma: p<2 moment bound for uniform on disk}, Lemma \ref{BE-ineq}, and Markov inequality, we get that for any $1/\delta < p<2$,   
    \begin{align*}
        \mathbb{P}\Bigl(  \sum_{k=2}^n  Y_k(r) > -n^{\delta} \Bigr) =1-  \mathbb{P}\Bigl(  \sum_{k=2}^n  Y_k(r) \leq -n^{\delta} \Bigr) \geq 1- \frac{C_p}{n^{\delta p - 1}},
    \end{align*}
    for some constant $C_p$. Also, the second term in \eqref{bonferroni} is bounded above by 
    \begin{align*}
        \mathbb{P}\Bigl(  W_n(r) > c_n ; Y_j(r) > c_n+ n^{\delta}; Y_i(r) > c_n+ n^{\delta} \Bigr) \leq \mathbb{P}\Bigl( Y_1(r) > c_n+ n^{\delta} \Bigr)^2.
    \end{align*}
    
    Plugging the above estimates into \eqref{bonferroni} and using Lemma \ref{distribution of Y ;lemma}, we obtain
    \begin{align*}
         \mathbb{P}&\Bigl(  W_n(r) > c_n \Bigr) \\&\geq  n \mathbb{P}\bigl( Y_1(r) > c_n + n^{\delta} \bigr)- \frac{C_p}{n^{\delta p -2 }}\mathbb{P}\bigl( Y_1(r) > c_n + n^{\delta} \bigr) - \frac{n(n-1)}{2}\mathbb{P}\bigl( Y_1(r) > c_n + n^{\delta} \bigr)^2\\
        & \geq  n \mathbb{P}\bigl( Y_1(r) > c_n + n^{\delta} \bigr)- \frac{C}{n^{\delta p}} - C n^2 \frac{1}{n^4}\\
        & \geq  n \mathbb{P}\bigl( Y_1(r) > c_n + n^{\delta} \bigr)- \frac{C}{n^{\delta p}}.\\
    \end{align*}
    
\subsection{Proof of  Proposition \ref{dev-anti-conc}}

Note that for $ r \leq 1 - \frac{1}{n^\alpha}$ and $m_n = (1-\epsilon_n)c_n$, we have $m_n, c_n + n^{\delta}> r + \frac{1}{1-r}$. Hence, by Lemma \ref{distribution of Y ;lemma}, 
\[ \mb{P}[Y_1(r) > m_n ] = \frac{1}{4(m_n-r)^2},\]
and \[ \mb{P}[Y_1(r) > c_n + n^{\delta} ] = \frac{1}{4( c_n + n^{\delta}-r)^2}.\]

Then, applying the upper and lower bounds obtained in the previous section with $c_n = a_n$ or $c_n = b_n$, we obtain 
\begin{align}
    \mb{P}(W_n(r) \in [a_n, b_n])
    &= \mb{P}(W_n(r) >a_n) - \mb{P}(W_n(r) >b_n)\nonumber  \\
    & \leq n \mb{P} (Y_1(r) > (1-\epsilon_n)a_n ) - n \mb{P} (Y_1(r) > b_n + n^{\delta})\nonumber \\ & \qquad\qquad\qquad +  C \frac{\log n}{n}\exp\biggl(-\frac{1}{2}\times(\log n)^{1-\beta} \biggr) + \frac{C}{n^{\delta p}}. \nonumber
\end{align} 
The above easily implies \eqref{claim-anticoncentration}, which completes the proof.




\section{A Kac-Rice Formula for counting \texorpdfstring{${\mathscr{C}}(\Lambda_n)$}{C tilde(Lambda n)}}

In this section, we obtain an explicit formula for $\mb{E}[\mathscr{C}(\Lambda_n)]$ and simplify it into a usable form.  

\subsection{The first step towards counting \texorpdfstring{${\mathscr{C}}(\Lambda_n)$}{C tilde(Lambda n)}}
Recall the notation \eqref{def-tilde-for-C}. In view of Lemma \ref{tilde-C}, it suffices to prove $\mb{E}[\tildea{\mathscr{C}}(\Lambda_n)] \sim C\sqrt{n}$ to establish Theorem \ref{main-thm}. To this end, we use the following proposition to count $\tildea{\mathscr{C}}(\Lambda_n)$.

\begin{prop}\label{Kac-Rice Formula}
 For the random polynomials $P_n$, we have almost surely 
    \begin{equation}\label{C-Formula}
        \mathscr{C}(\Lambda_n) \hspace{1mm}(\textrm{resp.} \hspace{1mm} \tildea{\mathscr{C}}(\Lambda_n) ) = 1 + \lim_{\epsilon \to 0} \frac{1}{\pi \epsilon^2} \int_{\mb{D} \hspace{1mm}(\textrm{resp.} \hspace{1mm} \mb{A}_n )} |P_n''(z)|^2 \mathbbm{1}_{\{|P_n'(z)|<\epsilon, |P_n(z)| >1\}} A(dz). 
    \end{equation} 
   Moreover, 
   \begin{equation}\label{exp-tilde-C}
       \mb{E}[\tildea{\mathscr{C}}(\Lambda_n)] = 1 + \lim_{\epsilon \to 0} \frac{1}{\pi \epsilon^2} \int_{\mb{A}_n} \mb{E}\bigl[|P_n''(z)|^2 \mathbbm{1}_{\{|P_n'(z)|<\epsilon, |P_n(z)| >1\}}\bigr] A(dz).
   \end{equation}  
\end{prop}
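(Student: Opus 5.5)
The almost sure identity \eqref{C-Formula} follows by applying Proposition \ref{prior-kac-rice} to the polynomial $P = P_n'$, of degree $m=n-1$, on the open set $U = \mb{D}\cap\{|P_n|>1\}$ (resp. $U=\mb{A}_n\cap\{|P_n|>1\}$). By Lemma \ref{components=critical}(b), almost surely all critical points $\beta_j$ are simple, so $P_n'$ has only simple roots and Proposition \ref{prior-kac-rice} applies. The set $U$ is open because $|P_n|$ is continuous. Proposition \ref{prior-kac-rice} then gives that $\frac{1}{\pi\epsilon^2}\int_U |P_n''|^2\mathbbm{1}_{\{|P_n'|<\epsilon\}}\,dA$ converges to the number of critical points lying in $U$.

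It remains to identify this count. By Gauss--Lucas, every $\beta_j$ lies in the closed convex hull of the roots, which is contained in $\mb{D}$ almost surely. Lemma \ref{components=critical}(b) says that almost surely no $\beta_j$ satisfies $|P_n(\beta_j)|=1$. Hence the critical points in $\mb{D}\cap\{|P_n|>1\}$ are exactly those with $|P_n(\beta_j)|>1$. Lemma \ref{components=critical}(b) then yields $\mathscr{C}(\Lambda_n)$. For the annulus version, note that $\Lambda_n^c=\{|P_n|\ge1\}$. Again using that no critical value has modulus $1$, together with definition \eqref{def-tilde-for-C}, the count equals $\tildea{\mathscr{C}}(\Lambda_n)-1$. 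A small boundary point must be checked: $\mb{A}_n$ is open, so we must rule out critical points on $|z|=1-r_n$ or $|z|=1$. Almost surely no critical point lies on a fixed circle, since the critical points have an absolutely continuous joint law. This follows from an argument as in Lemma \ref{components=critical}: condition on $X_2,\dots,X_n$, and note that $X_1 = f(z_0)$ then ranges over the image of a circle, a null set.

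For \eqref{exp-tilde-C}, take expectations in \eqref{C-Formula} and interchange the limit with the expectation, and then the expectation with the integral. The second interchange is Tonelli, since the integrand is nonnegative. For the first, use dominated convergence, with the uniform bound \eqref{sup_epsilon}: for every $\epsilon$,
\[
0\le \frac{1}{\pi \epsilon^2} \int_{\mb{A}_n} |P_n''(z)|^2 \mathbbm{1}_{\{|P_n'(z)|<\epsilon, |P_n(z)| >1\}} A(dz) \le n-1 .
\]
This is a deterministic integrable dominating constant, and it holds along any sequence $\epsilon_k\to0$. Since the pointwise limit exists almost surely, the limit of the expectations exists and equals the expectation of the limit.

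The only mildly delicate step is the measure-zero boundary issue in the second paragraph (critical points on $\partial\mb{A}_n$). Everything else is a direct combination of Proposition \ref{prior-kac-rice} and Lemma \ref{components=critical}.
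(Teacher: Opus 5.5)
Your proposal is correct and follows the paper's proof. As there, you apply Proposition \ref{prior-kac-rice} to $P_n'$ on $\{|P_n|>1\}$ (intersected with $\mb{D}$ or $\mb{A}_n$), identify the count via Lemma \ref{components=critical} and Gauss--Lucas, and pass to expectations by dominated convergence with the deterministic bound \eqref{sup_epsilon} plus Fubini/Tonelli. Your extra check that almost surely no critical point lies on the circle $|z|=1-r_n$ is a detail the paper leaves implicit. It is genuinely needed, because the limit in Proposition \ref{prior-kac-rice} would count a root lying on $\partial U$ only fractionally (cf.\ Remark \ref{a-good-remark}).
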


\begin{proof}
The formula \eqref{C-Formula} follows from \eqref{p1} by taking $P = P_n'$ with $U = \{ |P_n(z)| >1 \}$, $\{ |P_n(z)| >1 \}\cap \mb{A}_n$ respectively and invoking Lemma \ref{components=critical}. Finally, \eqref{exp-tilde-C} follows from \eqref{C-Formula} by Fubini and the dominated convergence theorem, where the domination property is verified using \eqref{sup_epsilon}. 
\end{proof}

\begin{remark}\label{a-good-remark}
    A closer look at proof of the above proposition will reveal that our approach to count the critical points via Kac-Rice formula is not well suited if critical points are to be found at the boundary of the lemniscate $\Lambda_n$ for example as in unit Bernoulli-Erd\"os lemniscate $\{ |P(z)| <1 \}$ with $P(z) = z^n-1$. The simple reason behind this is that if a critical point lies on the $\partial \Lambda_n$, then only a portion of the set $\{|P_n'(z)| <\epsilon\}$ will lie outside $\Lambda_n$. As such, area of its image under $P_n'$ will not fully compensate with $\pi \epsilon^2$. Fortunately, for random polynomials $P_n(z)$ with uniformly distributed i.i.d. roots, as proven in Lemma \ref{components=critical}, such a situation does not arise almost surely.    
\end{remark}

\subsection{A further refinement of the Kac-Rice formula}

We now explain the idea behind the formula given in Proposition \ref{Kac-Rice Formula}. To bring the formula \eqref{exp-tilde-C} into a usable form, we need to further simplify the integral and expected value appearing in \eqref{exp-tilde-C}. The expected value appearing in this formula depends on the joint distribution of $(P_n''(z), P_n'(z), P_n(z))$, which are correlated with each other. The dependence on such joint distributions also appears in the original work of Kac-Rice \cites{Rice-Mathematicalanalysisofrandomnoise, Kac-II}, and further simplification from this point onwards is specific to the model under investigation. In our particular setting, we reduce \eqref{exp-tilde-C} as follows. We play a trick of introducing a new random variable $X_0$ uniformly distributed on $\mb{D}$ and independent of the sequence $\{X_n\}_{n\geq 1}$ and writing 
\begin{align}\nonumber \label{trick}
    \mathcal{T}_n(\epsilon) := \frac{1}{\pi \epsilon^2 }\int_{\mb{A}_n} \mb{E}\bigl[|P_n''(z)|^2 &\mathbbm{1}_{\{|P_n'(z)|<\epsilon, |P_n(z)| >1\}}\bigr] A(dz) \\& = \frac{1}{\epsilon^2} \mb{E}\bigl[|P_n''((X_0))|^2 \mathbbm{1}_{\{|P_n'((X_0))|<\epsilon, |P_n((X_0))| >1, X_0 \in \mb{A}_n\}}\bigr].
\end{align}  
We now utilize the symmetry between random variables $X_0, X_1, ..., X_n$.  In the limit $\epsilon \to 0$, the event $\{|P_n'(X_0)| < \epsilon\}$ converges to $\{P_n'(X_0) =0\}$, i.e. $X_0$ is a critical point of $P_n$. The idea is that instead of solving for $X_0$ in terms of $X_1, ..., X_n$ in the equation $P_n'(X_0) =0$, solve for $X_1$ in terms of $X_0, X_2,...,X_n$. This trick allows us to perform an exact computation on the formula \eqref{trick} since the equation $P_n'(X_0) =0$ is highly non-linear to solve for $X_0$ given $X_1, X_2,..., X_n$, but given $X_0, X_2,...,X_n$, it is just a linear equation in $X_1$ which can be solved explicitly. Note that in \eqref{trick}, the factor $1/\epsilon^2$ is countered by the density of the random variable $X_0$. But, since $X_1$ has the same distribution as $X_0$, we can also counter $1/\epsilon^2$ through the density of $X_1$. \\

With $\mathcal{T}_n(\epsilon)$ given by \eqref{trick}, write $\mathcal{T}_n(0) = \lim_{\epsilon \to 0} \mathcal{T}_n(\epsilon)$. From \eqref{exp-tilde-C}, we know $ \mb{E}[\tildea{\mathscr{C}}(\Lambda_n)] = 1 + \mathcal{T}_n(0)$. Let us also introduce the polynomial
\begin{align}\label{Qpolynomial}
    \quad Q_n(z)= \frac{P_n(z)}{(z-X_1)} 
\end{align}
and random walks 
\begin{align}\label{RW-Def}
     R_n(z) := \sum_{k=2}^n \frac{1}{(z-X_k)^2}, \hspace{2mm}S_n(z) := \sum_{k=2}^n \frac{1}{z-X_k}, \hspace{2mm}\tildea{S}_n(z) := \sum_{k=3}^n \frac{1}{z-X_k}. 
\end{align}
Let the event 
\begin{align}\label{definition of O_n}
    O_n &=  \bigl\{|Q_n'(X_0)| < |Q_n(X_0)|^2, \hspace{2mm}\bigl|X_0 + \frac{Q_n(X_0)}{Q_n'(X_0)}\bigr| <1, \hspace{2mm}X_0 \in \mb{A}_n\bigr\}\\ &= \bigl\{|S_n(X_0)| < |Q_n(X_0)|, \hspace{2mm}\bigl|X_0 + \frac{1}{S_n(X_0)}\bigr| <1, \hspace{2mm}X_0 \in \mb{A}_n\bigr\}.\nonumber
\end{align}

The above outlined idea is implemented in the following proposition. 

\begin{prop} \label{key-prop} With $Q_n, R_n, S_n, O_n$ defined as above, 
\begin{equation}\label{T(0)=nice}
    \mathcal{T}_n(0) = \mathbb{E}\left[ \left| 1 + 
    \frac{R_n(X_0)}
         { \left(S_n(X_0)\right)^2 }
    \right|^2 \mathbbm{1}_{O_n}\right]. 
\end{equation} 
\end{prop}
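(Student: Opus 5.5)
The plan is to compute $\mathcal{T}_n(\epsilon)$ in \eqref{trick} exactly by conditioning on $X_0, X_2, \dots, X_n$. We then integrate over $X_1$ alone, using the change of variables $X_1 \mapsto w := P_n'(X_0)$, and let $\epsilon \to 0$.

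\textbf{The change of variables.} Write $P_n(z) = (z - X_1) Q_n(z)$. Then
\[
P_n'(X_0) = Q_n(X_0) + (X_0 - X_1)\, Q_n'(X_0), \qquad P_n''(X_0) = 2 Q_n'(X_0) + (X_0 - X_1)\, Q_n''(X_0).
\]
For fixed $X_0, X_2, \dots, X_n$ (almost surely $Q_n'(X_0) \neq 0$), the map $X_1 \mapsto w$ is affine with complex derivative $-Q_n'(X_0)$. Its inverse is
\[
X_1 = X_0 + \frac{Q_n(X_0) - w}{Q_n'(X_0)} .
\]
Since $X_1$ has density $\frac{1}{\pi} \mathbbm{1}_{\mb{D}}$, the conditional law of $w$ has density $\frac{1}{\pi} |Q_n'(X_0)|^{-2} \mathbbm{1}_{\{X_1(w) \in \mb{D}\}}$. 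Hence
\[
\mathcal{T}_n(\epsilon) = \mb{E}\Bigl[ \frac{1}{\pi \epsilon^2} \int_{|w| < \epsilon} G(w)\, A(dw) \Bigr],
\qquad
G(w) := \frac{|P_n''(X_0)|^2}{|Q_n'(X_0)|^2}\, \mathbbm{1}_{\{|P_n(X_0)| > 1,\ X_1(w) \in \mb{D},\ X_0 \in \mb{A}_n\}} .
\]
In $G$, all quantities are evaluated at $X_1 = X_1(w)$.

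\textbf{Evaluating $G(0)$.} At $w = 0$ we have $X_0 - X_1 = -Q_n/Q_n'$ (all at $X_0$). This gives three facts.
\begin{itemize}
\item $X_1(0) = X_0 + Q_n/Q_n' = X_0 + 1/S_n(X_0)$, using $Q_n'/Q_n = S_n$.
\item $P_n(X_0) = -Q_n^2/Q_n'$, so $|P_n(X_0)| > 1$ is equivalent to $|Q_n'| < |Q_n|^2$, i.e. $|S_n| < |Q_n|$. Together with the previous item this reproduces exactly the event $O_n$.
\item $P_n''/Q_n' = 2 - Q_n Q_n''/(Q_n')^2$. Using $Q_n''/Q_n = S_n^2 - R_n$, this equals $2 - (1 - R_n/S_n^2) = 1 + R_n/S_n^2$.
\end{itemize}
Therefore $G(0) = \bigl|1 + R_n(X_0)/S_n(X_0)^2\bigr|^2 \mathbbm{1}_{O_n}$. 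Moreover, $G$ is continuous at $w = 0$ except on the events $\{|X_1(0)| = 1\}$, $\{|P_n(X_0)| = 1\}$ and $\{|X_0| \in \{1 - r_n, 1\}\}$, which are null (the second by the argument of Lemma \ref{components=critical}(b)). So conditionally, almost surely, the inner average converges to $G(0)$, which is the integrand in \eqref{T(0)=nice}.

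\textbf{Exchanging limit and expectation (the main obstacle).} $G$ is not uniformly bounded: $|Q_n'(X_0)|$ can be small and $Q_n''$ can be large when $X_0$ is close to some $X_k$. I would truncate. Let $E_\delta := \{|Q_n'(X_0)| > \delta,\ \min_{k \geq 2} |X_0 - X_k| > \delta\}$. On $E_\delta$, $G$ is bounded by a constant $C(\delta, n)$ uniformly for $|w| < \epsilon$, since $X_1(w)$ ranges over a bounded set. Bounded convergence then gives the limit of the $E_\delta$ part of $\mathcal{T}_n(\epsilon)$, and monotone convergence as $\delta \to 0$ recovers the right side of \eqref{T(0)=nice}.

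It remains to show that the $E_\delta^c$ part of $\mathcal{T}_n(\epsilon)$ is small uniformly in $\epsilon$. Undoing the change of variables, this part is $\frac{1}{\pi \epsilon^2} \int \mb{E}[\,|P_n''|^2 \mathbbm{1}_{\{|P_n'| < \epsilon\}} \mathbbm{1}_{E_\delta^c}\,]\, A(dz)$. The key point is that $E_\delta^c$ depends on $X_1$, so the area bound of Lemma \ref{area Lemma}(b) and \eqref{sup_epsilon} does not apply to it directly. I would handle this by exploiting the exact change-of-variables identity together with symmetrization over which root plays the role of $X_1$ (in the spirit of Proposition \ref{Kac-Rice Formula}). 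The goal is to compare it with the expected number of critical points $\beta$ for which $|Q_n'(\beta)| \le \delta$ or $\beta$ lies within distance $\delta$ of a root; I expect this comparison to be the delicate step. That expectation tends to $0$ as $\delta \to 0$ by dominated convergence, with domination by $n - 1$, because almost surely critical points are distinct from roots and $Q_n'(\beta) \neq 0$. This would give $\mathcal{T}_n(0) = \lim_{\epsilon \to 0} \mathcal{T}_n(\epsilon)$ equal to \eqref{T(0)=nice}.
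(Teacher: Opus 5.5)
Your change of variables $X_1\mapsto w=P_n'(X_0)$ and your evaluation of $G(0)$ are correct. This is a slightly tidier form of the paper's argument: the paper also conditions on $X_0,X_2,\dots,X_n$, but it expands $P_n''(X_0)=a+O(\epsilon)$ and computes the $X_1$-probability of a ball of radius $\epsilon/|Q_n'(X_0)|$, whereas your exact Jacobian avoids both. The gap is in the step you flag yourself. The interchange of $\lim_{\epsilon\to0}$ and $\mb{E}$ is never established: the truncation argument ends with ``I expect this comparison to be the delicate step.'' The diagnosis behind it is also off, on three counts:
\begin{enumerate}
\item $Q_n''(X_0)$ cannot be large. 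All of $X_0,X_2,\dots,X_n$ lie in $\mb{D}$, so $|Q_n(X_0)|,|Q_n'(X_0)|,|Q_n''(X_0)|\le n^2 2^n$. Whenever $|X_0-X_1|<2$ this gives $|P_n''(X_0)|\le C_n$.
\item Your set $E_\delta$ does not depend on $X_1$ at all.
\item The bound \eqref{sup_epsilon}, via Lemma \ref{area Lemma}(b), is pathwise and holds for every measurable set, random or not.
\end{enumerate}

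The missing observation is that small $|Q_n'(X_0)|$ is already excluded by the indicator in $G$. Suppose $|w|<\epsilon\le 1/4$ and $G(w)\neq0$.
\begin{enumerate}
\item Since $X_1(w)\in\mb{D}$, we have $|X_0-X_1(w)|<2$.
\item So $|P_n(X_0)|=|X_0-X_1(w)|\,|Q_n(X_0)|>1$ forces $|Q_n(X_0)|>1/2$.
\item Then $w=Q_n(X_0)+(X_0-X_1(w))\,Q_n'(X_0)$ gives $|Q_n'(X_0)|>(1/2-\epsilon)/2\ge 1/8$.
\end{enumerate}
Hence $0\le G(w)\le 64C_n^2$ for all $|w|<\epsilon$. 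The averages $\frac{1}{\pi\epsilon^2}\int_{|w|<\epsilon}G\,dA$ are therefore uniformly bounded, and your pointwise convergence plus bounded convergence finishes the proof immediately. This is exactly the domination the paper uses: its lower bound $|Q_n'(X_0)|>1/8$ on $\{|P_n(X_0)|>1,|P_n'(X_0)|<\epsilon\}$, together with \eqref{for-dct}.

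Your truncation route can also be completed, without any symmetrization. The $E_\delta^c$ part equals $\mb{E}\bigl[\frac{1}{\pi\epsilon^2}\int_{V_\delta}|P_n''|^2\mathbbm{1}_{\{|P_n'|<\epsilon\}}\,dA\bigr]$, where $V_\delta=\mb{A}_n\cap\{|P_n|>1\}\cap\{z:|Q_n'(z)|\le\delta\text{ or }\min_{k\ge2}|z-X_k|\le\delta\}$. This is pathwise at most $n-1$ by \eqref{sup_epsilon}. Reverse Fatou then bounds its $\limsup_{\epsilon\to0}$ by the expected number of critical points in the closed bad set. That quantity tends to $0$ as $\delta\to0$ by dominated convergence, because almost surely no critical point is a root and none is a zero of $Q_n'$ (indeed $P_n'=Q_n'=0$ would force $Q_n=0$). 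As written, though, neither argument appears in your proposal, so the key step remains unproved.
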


\begin{proof}
    Let $\overline X_1$ be the solution to  
\begin{align*}
    \frac{1}{X_0-\overline{X}_1}+ \frac{1}{X_0 - X_2} + \frac{1}{X_0 - X_3}+ \cdots + \frac{1}{X_0-X_n}=0.
\end{align*}
Note that 

\begin{align}\label{aa2}
   \frac{P'(X_0)}{P(X_0)} = \frac{1}{X_0-X_1}+ \cdots + \frac{1}{X_0-X_n}, \hspace{2mm} \textrm{and} \hspace{2mm}  \frac{Q'(X_0)}{Q(X_0)} = \frac{1}{X_0-X_2}+ \cdots + \frac{1}{X_0-X_n},
\end{align}
which gives \[\overline{X}_1 = X_0 + \frac{Q(X_0)}{Q'(X_0)},\]
and 
\begin{equation}\label{X-diff}
    \frac{P'(X_0)}{P(X_0)} = \frac{1}{X_0 - X_1} - \frac{1}{X_0 - \overline X_1} = \frac{X_1 - \overline X_1}{(X_0 - X_1)(X_0 - \overline X_1)}.
\end{equation} 

\vspace{2mm}
Now, on the event $\big\{|P(X_0)| > 1 ,\hspace{.05in} |P'(X_0)|<\epsilon \big \}$, $|P'(X_0)|/|P(X_0)| < \epsilon$ and using \eqref{X-diff} gives  
\begin{align*}
   \frac{|X_1- \overline X_1|}{|X_0-X_1| \cdot |X_0- \overline X_1|} < \epsilon.
\end{align*}
Since 
\begin{align}\label{PQrelations}
    P_n'(z) = (z-X_1)Q_n'(z)+Q_n(z) \quad P_n''(z) = (z-X_1)Q_n''(z)+2Q_n'(z),
\end{align}
we have 
\begin{align}\label{aa3}
    P''(X_0) &= (X_0-X_1)Q''(X_0)+2Q'(X_0) \nonumber  \\[.75em]
    &= (X_0-\overline X_1)Q''(X_0)+2Q'(X_0) +(\overline X_1-X_1)Q''(X_0)  \nonumber \\[1em]
    & =  \underbrace{- \frac{Q(X_0)}{Q'(X_0)} Q''(X_0)+ 2Q'(X_0)}_{a} - \underbrace{\frac{(\overline X_1-X_1)}{(X_0-X_1)(X_0-\overline X_1)}}_{O(\epsilon)} \underbrace{(X_0-X_1) \frac{Q(X_0)Q''(X_0)}{Q'(X_0)}}_{b}.
\end{align}


Observe that  
\[
|Q(X_0)| \leq 2^n, \quad |Q'(X_0)| \leq n \cdot 2^n, \quad |Q''(X_0)| \leq n^2 \cdot 2^n.
\]
Also, on the event \( \{|P(X_0)| > 1\} \),
\[
|P(X_0)| = |X_0 - X_1| \cdot |Q(X_0)| > 1 \quad \Rightarrow \quad |Q(X_0)| > \frac{1}{|X_0 - X_1|} \geq \frac{1}{2},
\]
and on the event \( \{|P'(X_0)| < \epsilon \}\),
\[
\left| (X_0 - X_1) Q'(X_0) + Q(X_0) \right| < \epsilon \quad \Rightarrow \quad |Q(X_0)| - |X_0 - X_1| \cdot |Q'(X_0)| < \epsilon.
\]
This yields 
\begin{align}\label{lowerboundforQ'}
    |Q'(X_0)| > \frac{|Q(X_0)| - \epsilon}{|X_0 - X_1|} \geq \frac{1/2 - \epsilon}{2} >  \frac{1}{8}.
\end{align}
 for \( \epsilon \) small enough. Hence, from \eqref{aa3}, on the event $\big\{|P(X_0)| > 1 ,\hspace{.05in} |P'(X_0)|<\epsilon \big \}$,  
 \begin{align*}
     |P''(X_0)|^2= |a|^2 + O(\epsilon),
 \end{align*}
where the constant in $O(\epsilon)$ will depend on $n$. \\
Now, from \eqref{trick}, 
\begin{align}\nonumber 
    \mathcal{T}_n(\epsilon) =& \frac{1}{\epsilon^2} \mb{E}\bigl[|P_n''((X_0))|^2 \mathbbm{1}_{\{|P_n'((X_0))|<\epsilon, |P_n((X_0))| >1, X_0 \in \mb{A}_n\}}\bigr] \\
    =& \quad \frac{1}{\epsilon^2} \mb{E}\bigl[|a|^2 \mathbbm{1}_{\{|P_n'(X_0)|<\epsilon, |P_n(X_0)| >1, X_0 \in \mb{A}_n\}}\bigr] \label{main-1}\\& \qquad + O\bigl(\frac{1}{\epsilon}\bigr) \mb{E}\bigl[\mathbbm{1}_{\{|P_n'(X_0)|<\epsilon, |P_n(X_0)| >1, X_0 \in \mb{A}_n\}}\bigr]\label{error-1}.
\end{align} 

Note that event $\{|P'(X_0)| < \epsilon\}$ is same as 
\[\{\left| (X_0 - X_1) Q'(X_0) + Q(X_0) \right| < \epsilon\}  = \biggl\{\left| X_1 - \biggl(X_0 + \frac{Q(X_0)}{Q'(X_0)}\biggr) \right| < \frac{\epsilon}{|Q'(X_0)|}\biggr\}.\]
Hence, conditioning on $X_0, X_2, X_3,..., X_n$, 
\begin{align}
    \mb{E}^1\bigl[&\mathbbm{1}_{\{|P_n'(X_0)|<\epsilon, |P_n(X_0)| >1, X_0 \in \mb{A}_n\}}\bigr] \nonumber \\ &\leq \mathbbm{1}_{|Q'(X_0)| > 1/8} \mb{P}^1\biggl[\left |X_1 - \biggl(X_0 + \frac{Q(X_0)}{Q'(X_0)}\biggr) \right| < \frac{\epsilon}{|Q'(X_0)|}\biggr].\label{for-dct}
\end{align}
     
Since $\mb{P}^1[X_1 \in B(z, \epsilon)] \leq \epsilon^2$ for all $z\in \mb{C}, \epsilon>0$, we obtain 
\[ \mb{E}\bigl[\mathbbm{1}_{\{|P_n'(X_0)|<\epsilon, |P_n(X_0)| >1, X_0 \in \mb{A}_n\}}\bigr] = O(\epsilon ^2)\]
which implies that the term \eqref{error-1} goes to zero as $\epsilon \to 0$. 
Now, for \eqref{main-1}, again conditioning on $X_0, X_2, X_3,..., X_n$, 
\begin{align}
&\mb{E}^1\bigl[|a|^2 \mathbbm{1}_{\{|P_n'(X_0)|<\epsilon,  |P_n(X_0)| >1, X_0 \in \mb{A}_n\}}\bigr] \nonumber\\ =& |a|^2 \mathbbm{1}_{X_0 \in \mb{A}_n}  \mb{P}^1\bigl[|P_n'(X_0)|<\epsilon, |P_n(X_0)| >1\bigr] \nonumber \\ =& |a|^2 \mathbbm{1}_{X_0 \in \mb{A}_n}  \mb{P}^1\bigl[ |X_1 -X_0 - Q(X_0)/Q'(X_0)| < \epsilon/|Q'(X_0)|, |X_1 - X_0| >1/|Q(X_0)|\bigr]. \label{last-term}
\end{align}

Let us write $\mathcal{A}$ for the event $\{|X_1 -X_0 - Q(X_0)/Q'(X_0)| < \epsilon/|Q'(X_0)|, |X_1 - X_0| >1/|Q(X_0)|\}$. To compute the term $\mb{P}^1\bigl[ \mathcal{A}\bigr]$ appearing in \eqref{last-term}, we note the following conclusions made on the event $\mathcal{A}$, conditional on $X_0, X_2, X_3,..., X_n$: 

\begin{enumerate}
    \item If $|Q(X_0)|/ |Q'(X_0)| < 1/ |Q(X_0)|$ i.e. $|Q'(X_0)| > |Q(X_0)|^2$, then there is no admissible value of $X_1 - X_0$ for $\epsilon$ small enough and the event $\mathcal{A}$ is empty. This is because on event $\mathcal{A}$, for $\epsilon$ small enough, $X_1 -X_0$ is close to $|Q(X_0)|/ |Q'(X_0)|$, and $|X_1 -X_0|$ is larger than $1/|Q(X_0)|$, which implies $|Q(X_0)|/ |Q'(X_0)| > 1/|Q(X_0)|$ which is a contradiction. 
\item If $|X_0 + Q(X_0)/Q'(X_0)| > 1$, there is no admissible value of $X_1$ for $\epsilon$ small enough and the event $\mathcal{A}$ is empty. This is because on event $\mathcal{A}$, for $\epsilon$ small enough, $X_1$ is close to $X_0 + Q(X_0)/Q'(X_0)$, which implies $|X_1| >1$. Since $X_1$ takes values in the unit disc $\mb{D}$, this is a contradiction. 

\item Using the same arguments as in Lemma \ref{components=critical}-$(b)$, it follows that $ |Q'(X_0)| \neq |Q(X_0)|^2$ \hspace{1mm} and $|X_0 + Q(X_0)/Q'(X_0)| \neq 1 $ almost surely\footnote{The idea behind this claim can be easily explained in words: since $X_0, X_2, X_3,..., X_n$ are i.i.d. random variables, they will not satisfy any deterministic algebraic constraint almost surely.}.

\item If $|Q'(X_0)| < |Q(X_0)|^2$, \hspace{1mm} $|X_0 + Q(X_0)/Q'(X_0)| < 1$, for $\epsilon$ small enough, admissible values of $X_1$ contribution in the event $\mathcal{A}$ is the ball of radius $\epsilon/|Q'(X_0)|$ centered at $X_0 + Q(X_0)/Q'(X_0)$. 

\end{enumerate}

Putting together above observations, we have for $\epsilon$ small enough, 
\begin{equation}\label{for-pointwise}
\mb{P}^1\bigl[\mathcal{A}\bigr] = \frac{\epsilon^2}{|Q'(X_0)|^2} \mathbbm{1}_{|Q'(X_0)| < |Q(X_0)|^2, |X_0 + Q(X_0)/Q'(X_0)| < 1}.
\end{equation} 
Plugging the above computation in \eqref{last-term} and substituting \eqref{last-term} into \eqref{main-1}, we evaluate the limit as $\epsilon \to 0$ of the expression \eqref{main-1}. We claim that the expression \eqref{main-1} converges to 
\[ \mb{E}\biggl[ \frac{|a|^2}{|Q'(X_0)|^2} \mathbbm{1}_{\{|Q'(X_0)| < |Q(X_0)|^2, |X_0 + Q(X_0)/Q'(X_0)| < 1, X_0 \in \mb{A}_n\}}\biggr].\]
The above claim follows by writing $\mb{E}$ appearing in \eqref{main-1} via conditioning as $\mb{E}[\mb{E}^1[\cdots]]$ and using the dominated convergence theorem (DCT). The required pointwise convergence in the application of DCT is given by \eqref{for-pointwise} and the required domination is given by \eqref{for-dct}. Noting that $a/Q'(X_0) = 1 + R_n(X_0)/S_n(X_0)^2$ and $\{|Q'(X_0)| < |Q(X_0)|^2, |X_0 + Q(X_0)/Q'(X_0)| < 1, X_0 \in \mb{A}_n\} = O_n$ concludes the proof. 
\end{proof}


\subsection{Working with the Kac-Rice formula} \label{analyse-the-kac-rice}
It can be seen from the proof above that for each fixed $n$ the expected value appearing in the formula \eqref{T(0)=nice} is indeed finite. In fact, just by looking at the expression \eqref{T(0)=nice}, it is not apparent why it should be finite. In fact, the mean of the random walk $\mb{E}[|R_n|]$ is infinite (note that $\mb{E}[1/|z- X_1|^2] = + \infty$). Nevertheless, since a ratio of two random walks appears in this formula, it is natural to explore the essence of the law of large numbers/ergodic theorems (more specifically ratio ergodic theorems, see \cites{Aaronson-Jon-Anintroductiontoinfiniteergodictheory, Kamae-Keane-Asimpleproofoftheratioergodictheorem} for some results in that direction). We also note that the event $O_n$ imposes certain large deviation type constraints, which is required to control the blowing up of expected value in \eqref{T(0)=nice} even though $|R_n|$ has infinite mean. 

With $R_n(z), S_n(z), \tildea{S}_n(z)$ given by \eqref{RW-Def}, we write 
\begin{align}\label{def of S_n and S_n hat}
   R_n := R_n(X_0),  \textrm{\hspace{2mm}}  S_n := S_n(X_0)  \textrm{\hspace{2mm}and\hspace{2mm}} \tildea{S}_n := \tildea{S}_n(X_0). 
\end{align}

To analyse the formula \eqref{T(0)=nice} we note that 
\[ \frac{R_n}{S_n^2} = \sum_{k=2}^n \frac{1}{((X_0-X_k)S_n)^2},\]
which gives 
\begin{align*}
   \mathbb{E}&\biggl[ \biggl| 1 + 
    \frac{R_n(X_0)}
         { \left(S_n(X_0)\right)^2 }
    \biggr|^2 \mathbbm{1}_{O_n}\biggr] \\ &= \mathbb{E}\biggl[ \mathbbm{1}_{O_n} + \biggl| \sum_{k=2}^n\frac{1}{[(X_0-X_k)S_n]^2}\biggr|^2 \mathbbm{1}_{O_n} + 2 \Re \sum_{k=2}^n\frac{1}{[(X_0-X_k)S_n]^2} \mathbbm{1}_{O_n}
     \biggr]\\[1em]
     &=\mathbb{E}\biggl[ \mathbbm{1}_{O_n} + \sum_{k=2}^n\frac{1}{|(X_0-X_k)S_n|^4} \mathbbm{1}_{O_n}+ \sum_{2\leq i\neq j\leq n}^n\frac{1}{(X_0-X_i)^2(\overline{X_0-X_j})^2|S_n|^4} \mathbbm{1}_{O_n} \\[1em] & \qquad\qquad\qquad\qquad\qquad\qquad\qquad\qquad\qquad +2 \Re \sum_{k=2}^n\frac{1}{[(X_0-X_k)S_n]^2} \mathbbm{1}_{O_n} \biggr]
    \\[1em]
    &= O(1) + (n-1) \mathbb{E}\biggl[\frac{1}{|(X_0-X_2)S_n|^4}\mathbbm{1}_{O_n}\biggr] + O\biggl(n^2\mathbb{E}\biggl[\frac{1}{|X_0-X_2|^2|X_0-X_3|^2|S_n|^4} \mathbbm{1}_{O_n}\biggr]\biggl) \\[1em]&  \qquad\qquad\qquad\qquad\qquad\qquad\qquad\qquad\qquad +  O\biggl( n\mb{E}\biggl[\frac{1}{|(X_0-X_2)S_n|^2} \mathbbm{1}_{O_n}\biggr]\biggr).
\end{align*}

We write \[ M_n := \mathbb{E}\biggl[\frac{1}{|(X_0-X_2)S_n|^4}\mathbbm{1}_{O_n}\biggr],\]
\[ R_{n}^0 := \mb{E}\biggl[\frac{1}{|(X_0-X_2)S_n|^2} \mathbbm{1}_{O_n}\biggr], \]
and 
\[R_n^1 := \mathbb{E}\biggl[\frac{1}{|X_0-X_2|^2|X_0-X_3|^2|S_n|^4} \mathbbm{1}_{O_n}\biggr].\]

Hence, from Proposition \ref{key-prop} and the above computation, we have 
\begin{equation}\label{T-expansion}
    \mathcal{T}_n(0) = (n-1) M_n + O(1) + O(nR_n^0) + O(n^2 R_n^1).
\end{equation} 

We claim that the fourth moment term $M_n$ is the main term in the asymptotic behaviour of $\mathcal{T}_n(0)$ and the terms involving $R_n^0$ and $R_n^1$ are negligible in comparison. 

\section{Analyzing the terms \texorpdfstring{$M_n$}{Mn},
\texorpdfstring{$R_n^0$}{Rn0} and,
\texorpdfstring{$R_n^1$}{Rn1}.}
\label{some-heavy-computations}



The following proposition helps us to determine the behaviour of terms $M_n, R_n^0, R_n^1$. Recall the notation $\mb{E}_i$ from Section \ref{notations}.

\begin{prop}\label{main term being probability}
    Let $O_n$ be the event defined in \eqref{definition of O_n} and $S_n$ be the random walk defined in \eqref{def of S_n and S_n hat}. Then, 
    \begin{align}\label{E2-bound}
\mathbb{E}_2\left[ \frac{1}{|S_n(X_0-X_2)|^4} \mathbbm{1}_{O_n} \right]= \mathbb{P}_2[O_n]. 
        \end{align}
    In particular, 
    \begin{align}
\mathbb{E}\left[ \frac{1}{|S_n(X_0-X_2)|^4} \mathbbm{1}_{O_n} \right]= \mathbb{P}[O_n]. 
        \end{align}
Furthermore, for any integer $k\in \mb{Z}$, 
\begin{align}\label{2^k-bound}
\mathbb{E}_2&\left[ \frac{1}{|S_n(X_0-X_2)|^4} \mathbbm{1}_{O_n} \mathbbm{1}_{\left \{ \frac{|X_0-X_2|}{|X_0-X_3|} \in [2^k, 2^{k+1})\right\}}\right] \nonumber
           \\ &\qquad \qquad\qquad\qquad = \mathbb{P}_2\left[O_n \cap \left\{ \frac{1}{|S_n||X_0-X_3|}\in [2^k, 2^{k+1})\right\}\right].
            \end{align}
\end{prop}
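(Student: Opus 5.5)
The plan is to prove \eqref{E2-bound} by a change of variables in $X_2$ alone, with $X_0,X_3,\dots,X_n$ frozen. Condition on $X_0,X_3,\dots,X_n$ and write $T:=\tildea{S}_n(X_0)=\sum_{k\ge 3}(X_0-X_k)^{-1}$ and $\Pi:=\prod_{k\ge3}|X_0-X_k|$, so that $S_n=(X_0-X_2)^{-1}+T$ and $|Q_n(X_0)|=|X_0-X_2|\,\Pi$. Consider the M\"obius map
\[
\varphi(x):=X_0+\frac{1}{\frac{1}{X_0-x}+T}=X_0+\frac{X_0-x}{1+T(X_0-x)} .
\]
Thus $\varphi(X_2)=X_0+1/S_n$, which is exactly the point appearing in the second condition of $O_n$.

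\textbf{Step 1 (Jacobian).} By the chain rule, $\varphi'(x)=-\bigl(\tfrac1{X_0-x}+T\bigr)^{-2}(X_0-x)^{-2}$. Hence
\[
|\varphi'(X_2)|^2=\frac{1}{|S_n(X_0-X_2)|^4},
\]
which is precisely the integrand. So $\pi\,\mathbb{E}_2[\,\cdot\,]=\int_{\mb{D}}|\varphi'(x)|^2\mathbbm{1}_{O_n}(x)\,dA(x)$. Since $\varphi$ is univalent off a single point, Lemma~\ref{area Lemma}(a) converts this into the area of $\varphi(\{x\in\mb{D}: O_n(x)\})$.

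\textbf{Step 2 (involution).} Solving $\varphi(x)=y$ gives $X_0-x=(y-X_0)/(1-T(y-X_0))$, that is,
\[
x=X_0+\frac{X_0-y}{1+T(X_0-y)}=\varphi(y).
\]
So $\varphi$ is an involution.

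\textbf{Step 3 (the event is invariant).} I will check that the set $E:=\{x\in\mb{D}:\ x \text{ satisfies } O_n\}$ is mapped onto itself by $\varphi$.

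The condition $X_0\in\mb{A}_n$ does not involve $x$. The condition $|X_0+1/S_n|<1$ reads $\varphi(x)\in\mb{D}$. Together with $x\in\mb{D}$, this gives a pair of conditions that is symmetric under $x\leftrightarrow\varphi(x)$ by Step 2.

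For the remaining condition, multiplying $|S_n|<|Q_n(X_0)|$ by $|X_0-x|$ turns it into
\[
|1+T(X_0-x)|<|X_0-x|^2\,\Pi .
\]
Setting $y=\varphi(x)$ and using $X_0-x=(y-X_0)/(1-T(y-X_0))$, the same inequality becomes $|1-T(y-X_0)|<|y-X_0|^2\Pi$. This is the identical condition evaluated at $y$.

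Hence $\varphi(E)=E$, and the area in Step 1 equals $|E|$. Therefore $\mathbb{E}_2[\cdots]=|E|/\pi=\mathbb{P}_2[O_n]$. Taking full expectation gives the second identity.

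\textbf{Step 4 (dyadic refinement).} For \eqref{2^k-bound}, I add the indicator $|X_0-x|/|X_0-X_3|\in[2^k,2^{k+1})$ to the integrand. Under $y=\varphi(x)$ one has $|X_0-y|=1/|S_n(x)|$, so this indicator becomes $\{1/(|S_n||X_0-X_3|)\in[2^k,2^{k+1})\}$ evaluated at the variable $y$. Renaming $y$ as $X_2$ yields \eqref{2^k-bound}.

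The main obstacle is Step 3: spotting that $\varphi$ is an involution and verifying that the condition $|S_n|<|Q_n(X_0)|$ transforms into itself. Measure-zero issues can be dismissed as in Lemma~\ref{components=critical}(b). These are the pole of $\varphi$ and the boundary cases where $1+T(X_0-x)=0$, so that $S_n=0$.
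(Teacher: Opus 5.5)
Your argument is correct and is essentially the paper's proof: the paper's map $f(w)=(X_0-w)/(1+T(X_0-w))$ is exactly your $\varphi-X_0$. Its substitution $v=f(w)+X_0$, followed by rewriting the constraints of $O_n$ in terms of $v$, amounts to your observation that $\varphi$ is an involution mapping the constraint set $E$ onto itself. Your explicit involution makes the dyadic refinement more transparent than the paper's ``proceeding similarly''. Note that the identity you actually need in Step 4 is $|X_0-x|=1/|S_n(y)|$; it follows from the one you state, $|X_0-y|=1/|S_n(x)|$, by swapping $x$ and $y$ via Step 2.
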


\begin{proof}
     By writing $(X_0 - X_2)S_n = 1 + (X_0 - X_2)\tildea{S}_n$ and using the independence of $\tildea{S}_n$ with $X_2$, we note that  
\begin{align}\label{first-area}
         \E_2\left[\frac{1}{|(X_0-X_2)S_n|^4} \mathbbm{1}_{O_n}\right] &= \frac{1}{\pi}\int_{\mathbb{D}} \frac{1}{|1+(X_0-w)\tildea{S}_n|^4}\mathbbm{1}_{O_{2,n}}(w)dA(w),
    \end{align}
where $O_{2,n}$ is the (random) subset of $\mb{D}$ consisting of those $w$ such that the constraints of the event $O_n$ holds with freezing $X_2 =w$. More precisely, 
 \begin{align}\label{O2n-def}
        &O_{2,n} = \biggl\{ w : |w| <1, \bigr| (X_0 -w)^{-1}+ \tildea{S}_n \bigl| < |X_0-w |\prod_{k=3}^n|X_0-X_k|, \\ & \qquad\qquad\qquad\qquad \qquad\qquad\qquad\biggl| X_0+ \frac{1}{(X_0 -w)^{-1}+\tildea{S}_n} \biggr|<1, X_0 \in \mb{A}_n \biggr\}.\nonumber
    \end{align}

Define $f(w) := (X_0-w)/(1+ (X_0-w)\tildea{S}_n)$. Then $f$ is  injective and holomorphic on $O_{2, n}$ with
\[ |f'(w)|^2 = \frac{1}{|1+ (X_0-w)\tildea{S}_n|^4}.\]
Using Lemma \ref{area Lemma}, equation \eqref{first-area} can be written as 
\begin{equation}\label{area-formula-once-again}
    \eqref{first-area}= \frac{1}{\pi}\int_{O_{2,n}} |f'(w)|^2 dA(w) = \frac{1}{\pi} Area(f(O_{2,n})).
\end{equation}

Also, using \eqref{O2n-def}, 
\begin{align}\label{f(K_n)}
        &f(O_{2,n}) = \Bigg\{ f(w) :  |w| <1, \bigl| (X_0-w)^{-1}+ \tildea{S}_n \bigr| < |X_0-w|\prod_{k=3}^n|X_0-X_k|,\\&  \qquad\qquad\qquad\qquad\qquad\qquad\qquad\qquad\biggl| X_0+ \frac{1}{(X_0-w)^{-1}+\tildea{S}_n} \biggr|<1, X_0 \in \mb{A}_n\Bigg \}. \nonumber
    \end{align}
    
    Next, we simplify $f(O_{2,n})$ with a change of variable $w\mapsto f(w)$.
    A simple computation gives that $w= X_0-\Big( \frac{1}{f(w)}-\tildea{S}_n\Big)^{-1}$. Writing $ v := f(w)+X_0$, we get 
    \begin{align}\label{f(K_n) modified}
        \eqref{f(K_n)} & = \Bigg\{ v -X_0  :  \left| X_0 -  \left( \frac{1}{v-X_0}-\tildea{S}_n\right)^{-1} \right|<1, \nonumber \\
       & \hspace{1in}\left| \frac{1}{v-X_0} \right| < \left| \frac{1}{X_0-v} + \tildea{S}_n \right|^{-1} \prod_{k=3}^n|X_0-X_k|,   |v| < 1, X_0 \in \mb{A}_n\Bigg \}.
    \end{align}
    Now letting $ \widehat{S}_n:= \frac{1}{X_0-v}+\tildea{S}_n$ and $\widehat{Q}(z)=(z-v)\prod_{k=3}^n(z-X_k)$, we see \eqref{f(K_n) modified} changes to 
    \begin{align*}
        & \Bigg\{ v-X_0  :  \left| X_0 + \left( \widehat{S}_n\right)^{-1} \right|<1,\left| \frac{1}{v-X_0} \right| < \left|\widehat{S}_n \right|^{-1}\prod_{k=3}^n|X_0-X_k|,  |v| < 1, X_0 \in \mb{A}_n \Bigg \} \nonumber \\[1em]
        =& \qquad\bigg\{ v-X_0  :  \left| X_0 + \frac{\widehat{Q}(X_0)}{\widehat{Q}'(X_0)} \right|<1,\left| \frac{\widehat{Q}'(X_0)}{\widehat{Q}(X_0)} \right| < |\widehat{Q}(X_0)|,  |v| < 1, X_0 \in \mb{A}_n \bigg \}.
    \end{align*}
    Since translation by $X_0$ preserves area, it follows that
    \begin{align}\label{f(K_n) modified again}
      &\mbox{Area} ( f(O_{2,n})) = \mbox{Area} \left( \Bigg\{ v :  \left| X_0 + \frac{\widehat{Q}(X_0)}{\widehat{Q}'(X_0)} \right|<1,\left| \frac{\widehat{Q}'(X_0)}{\widehat{Q}(X_0)} \right| < |\widehat{Q}(X_0)|,  |v| < 1, X_0 \in \mb{A}_n \Bigg \} \right),
    \end{align}
where $v$ is implicitly hidden in the definition of $\widehat{Q}$. Note that we are here computing the area of the region spanned by $v$ so that $v$ satisfies the constraints appearing in the right hand side of \eqref{f(K_n) modified again} and where we have conditioned on $X_0, X_3, ..., X_n$. Hence conditional on $X_0, X_3, ..., X_n$, since $X_2$ is independent of $X_0, X_3, ..., X_n$, we can write the right hand side of \eqref{f(K_n) modified again} as 
\[ \eqref{f(K_n) modified again} = \pi \mb{E}_2\left( \Bigg\{  \left| X_0 + \frac{{Q}(X_0)}{{Q}'(X_0)} \right|<1,\left| \frac{{Q}'(X_0)}{{Q}(X_0)} \right| < |{Q}(X_0)|, X_0 \in \mb{A}_n \Bigg \} \right),\]

where we have used the fact that if we replace $v$ by $X_2$ in the expression for $\widehat{Q}(z)$ we obtain $Q(z)$. Plugging the above in \eqref{area-formula-once-again} completes the proof of \eqref{E2-bound}. The proof of \eqref{2^k-bound} uses the same arguments as above. We again use the formula \eqref{first-area}, but since in \eqref{2^k-bound} we have the additional event $\{\frac{|X_0 -X_2|}{|X_0 - X_3|} \in [2^k, 2^{k+1})\}$, we have to change the set $O_{2,n}$ in \eqref{first-area} to the set $O_{2,n} \cap \{\frac{|X_0 -w|}{|X_0 - X_3|} \in [2^k, 2^{k+1})\}$. Then, proceeding similarly as above yields the formula \eqref{2^k-bound}. 
\end{proof}

The benefit of the formula \eqref{2^k-bound} is that it also allows us to control the error term $R_n^1$ using the error term $R_n^0$.

\begin{prop} \label{R_1< R_0}Let $O_n$ be the event defined in \eqref{definition of O_n} and $S_n$ is the random walk defined in \eqref{def of S_n and S_n hat}. Then, for some absolute constant $C$, 
   \begin{align}
         R_n^1 = \mathbb{E}\biggl[\frac{1}{|X_0-X_2|^2|X_0-X_3|^2|S_n|^4} \mathbbm{1}_{O_n}\biggr] \leq C\mb{E}\biggl[\frac{1}{|(X_0-X_2)S_n|^2} \mathbbm{1}_{O_n}\biggr] = C R_{n}^0.
   \end{align}
\end{prop}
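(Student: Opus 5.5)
Split $R_n^1$ dyadically according to the ratio $|X_0-X_2|/|X_0-X_3|$ and apply \eqref{2^k-bound} on each piece. For $k\in\mb{Z}$ let $D_k=\{|X_0-X_2|/|X_0-X_3|\in[2^k,2^{k+1})\}$. On $D_k$ we have $|X_0-X_3|^{-2}\le 2^{2k+2}|X_0-X_2|^{-2}$. Hence
\[
R_n^1\le \sum_{k\in\mb{Z}} 2^{2k+2}\,\mb{E}\Bigl[\frac{1}{|S_n(X_0-X_2)|^4}\mathbbm{1}_{O_n}\mathbbm{1}_{D_k}\Bigr].
\]
Take $\mb{E}_2$ first, since the other factors are measurable with respect to the other variables. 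Then \eqref{2^k-bound} turns each summand into
\[
2^{2k+2}\,\mb{P}\Bigl[O_n\cap\Bigl\{\tfrac{1}{|S_n||X_0-X_3|}\in[2^k,2^{k+1})\Bigr\}\Bigr].
\]
On that event $2^{2k}\le \frac{1}{|S_n|^2|X_0-X_3|^2}$. So the summand is at most
\[
4\,\mb{E}\Bigl[\frac{1}{|S_n|^2|X_0-X_3|^2}\mathbbm{1}_{O_n}\mathbbm{1}_{\{\frac{1}{|S_n||X_0-X_3|}\in[2^k,2^{k+1})\}}\Bigr].
\]

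These events are disjoint in $k$ and their union is a.s. everything. Summing over $k$ therefore gives
\[
R_n^1\le 4\,\mb{E}\Bigl[\frac{1}{|(X_0-X_3)S_n|^2}\mathbbm{1}_{O_n}\Bigr].
\]
Next use exchangeability. The event $O_n$ and $S_n$ are symmetric functions of $X_2,\dots,X_n$ (given $X_0$); this includes $Q_n(X_0)=\prod_{k\ge2}(X_0-X_k)$. Swapping $X_2\leftrightarrow X_3$ shows the last expectation equals $R_n^0$. This gives $R_n^1\le 4R_n^0$, i.e. $C=4$.

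The step needing care is justifying \eqref{2^k-bound} for each $k$ and the interchange of sum and expectation. All terms are nonnegative, so Tonelli handles the interchange. One must also check that the dyadic events partition the probability space up to null sets, where $X_0=X_3$ or $S_n=0$; these have probability zero since the variables have densities. I expect no real obstacle beyond confirming that the conditioning in \eqref{2^k-bound} is applied with $X_0,X_3,\dots,X_n$ frozen, so that the factor $2^{2k}$ is deterministic under $\mb{E}_2$.
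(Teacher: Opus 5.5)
Your proof is correct and follows the paper's argument: you split dyadically in $|X_0-X_2|/|X_0-X_3|$, apply \eqref{2^k-bound} on each piece, and convert $2^{2k}$ back into $|(X_0-X_3)S_n|^{-2}$ on the new dyadic events before summing over $k$. You also state explicitly the final $X_2\leftrightarrow X_3$ exchangeability step, which the paper leaves implicit when it ``takes the expectation with respect to $X_0,X_3,\dots,X_n$'', and you obtain the explicit constant $C=4$.
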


\begin{proof}
    Using \eqref{2^k-bound}, conditional on $X_0, X_3, ..., X_n$, 
 \begin{align*}
         &\mathbb{E}_2\biggl[\frac{1}{|X_0-X_2|^2|X_0-X_3|^2|S_n|^4} \mathbbm{1}_{O_n}\biggr] = \mathbb{E}_2\left[ \frac{1}{|(X_0-X_2)S_n|^4}\frac{|X_0-X_2|^2}{|X_0-X_3|^2} \mathbbm{1}_{O_n} \right]\\
         &\lesssim \sum_{k \in \mb{Z}} 2^{2k} \mathbb{E}_2\left[ \frac{1}{|(X_0-X_2)S_n|^4} \mathbbm{1}_{ \left \{\frac{|X_0-X_2|}{|X_0-X_3|} \in [2^k,2^{k+1}) \right \}}\mathbbm{1}_{O_n} \right]\\
         &\lesssim \sum_{k \in \mb{Z}}2^{2k} \mathbb{E}_2\left[ \mathbbm{1}_{ \left \{\frac{1}{|S_n||X_0-X_3|} \in [2^k,2^{k+1}) \right \}}\mathbbm{1}_{O_n} \right]\\
         &\lesssim \sum_{k \in \mb{Z}}\mathbb{E}_2\left[ \frac{1}{|(X_0-X_3)S_n|^2}\mathbbm{1}_{ \left \{\frac{1}{|S_n||X_0-X_3|} \in [2^k,2^{k+1}) \right \}}\mathbbm{1}_{O_n} \right] \\
         & = \mathbb{E}_2\left[ \frac{1}{|(X_0-X_3)S_n|^2}\mathbbm{1}_{O_n} \right].
     \end{align*}
     The proof is completed by taking the expectation with respect to $X_0, X_3, ..., X_n$. 
\end{proof}

To control the behaviour of the error term $R_n^0$ (and hence $R_n^1$ by the proposition above), we will use the following estimate:

\begin{prop}\label{key proposition for soft error term}
Let $O_n$ be the event defined in \eqref{definition of O_n} and $S_n, \tildea{S}_n$ be the random walks defined in \eqref{def of S_n and S_n hat}. Then, for some absolute constant $C$, conditional on $X_0,X_3, \cdots, X_n$, 
    \begin{align}\label{Inverse-RW-estimate}
        \mathbb{E}_2\left[\frac{1}{|(X_0-X_2)S_n|^2} \mathbbm{1}_{O_n} 
     \right] \leq C \min \left \{ 1, \frac{\log(1+2|\tildea{S}_n|)}{|\tildea{S}_n|^2}  \right \}\mathbbm{1}_{X_0 \in \mb{A}_n}.
    \end{align}
\end{prop}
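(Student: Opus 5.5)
The plan is to compute the conditional expectation in \eqref{Inverse-RW-estimate} as an explicit area integral in the variable $X_2=w$, as in the proof of Proposition \ref{main term being probability}, and then to change variables so that the integrand becomes $1/|z|^2$. Lemma \ref{log bound for some integral: lemma} then applies, and the only delicate point is a lower bound for $|z|$, which will come from the constraint $|X_0+1/S_n|<1$ in $O_n$. Throughout we condition on $X_0,X_3,\dots,X_n$ and write $T:=\tildea{S}_n$. The factor $\mathbbm{1}_{X_0\in\mb{A}_n}$ is automatic, since $O_n\subset\{X_0\in\mb{A}_n\}$.

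\emph{Step 1: reduction to an integral.} Put $u:=X_0-w$ and $z:=1+uT$. Then $(X_0-X_2)S_n=1+(X_0-X_2)T=z$ and $S_n=z/u$. Since $w\in\mb{D}$ and $X_0\in\mb{D}$, we have $|u|<2$. On $O_n$ the condition $|X_0+1/S_n|<1$ gives $|1/S_n|<2$, that is,
\begin{equation*}
|z| \;\geq\; \tfrac{1}{2}\,|u| .
\end{equation*}
We drop all other constraints of $O_n$. This gives
\begin{equation*}
\mathbb{E}_2\left[\frac{1}{|(X_0-X_2)S_n|^2}\mathbbm{1}_{O_n}\right]\;\leq\;\frac{1}{\pi}\int_{\{|u|<2,\ |1+uT|\geq |u|/2\}}\frac{dA(u)}{|1+uT|^2}.
\end{equation*}

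\emph{Step 2: splitting according to $|z|$.} We treat two regions separately.

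On the region $\{|z|\geq 1/2\}$ the integrand is at most $4$ and the $u$-area is at most $4\pi$, so this part is $O(1)$.

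For large $|T|$ we also need a second bound on the same region. Change variables $u\mapsto z$, which gives $dA(u)=dA(z)/|T|^2$. The image of $\{|u|<2\}$ lies in $\{|z|\leq 1+2|T|\}$. Lemma \ref{log bound for some integral: lemma} (applied after rescaling, or directly in polar coordinates) then bounds this part by
\begin{equation*}
\frac{C}{|T|^2}\log\bigl(2(1+2|T|)\bigr)\;\lesssim\;\frac{\log(1+2|T|)}{|T|^2}\qquad\text{for } |T|\geq 1/2 .
\end{equation*}

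\emph{Step 3: the region $|z|<1/2$, the key step.} Here $|uT|=|z-1|>1/2$, so $|u|>1/(2|T|)$. Combined with Step 1 this gives $|z|\geq |u|/2>1/(4|T|)$.

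If $|T|<1/2$, this forces $|z|>1/2$, so the region is empty. If $|T|\geq 1/2$, then in the $z$ variable the region lies in the annulus $\{1/(4|T|)\leq |z|\leq 1/2\}$. Its contribution is therefore at most
\begin{equation*}
\frac{1}{\pi |T|^2}\int_{1/(4|T|)\leq|z|\leq 1/2}\frac{dA(z)}{|z|^2}\;=\;\frac{2}{|T|^2}\log(2|T|)\;\lesssim\;\frac{\log(1+2|T|)}{|T|^2}.
\end{equation*}

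\emph{Step 4: combining.} For $|T|<1/2$, Steps 2 and 3 give an $O(1)$ bound. For $|T|\geq 1/2$, they give a bound of order $\log(1+2|T|)/|T|^2$, which is also $O(1)$. Together these yield the minimum appearing in \eqref{Inverse-RW-estimate}.

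The main obstacle is the non-integrable singularity of $1/|z|^2$ at $z=0$, i.e. at $X_2=X_0+1/T$. It is handled exactly by the $O_n$-constraint $|S_n|\geq 1/2$, which, through $|z|\geq|u|/2$ and $|u|>1/(2|T|)$, keeps $z$ at distance at least $1/(4|T|)$ from $0$; this produces the logarithmic factor.
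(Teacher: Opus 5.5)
Your proof is correct. For the logarithmic bound it is the paper's argument in different coordinates. The paper writes the conditional expectation as $\frac{1}{\pi|\tildea{S}_n|^2}\int_{O_{2,n}}|\alpha-w|^{-2}\,dA(w)$, with pole $\alpha=X_0+1/\tildea{S}_n$ (your $z=0$). It rewrites the constraint $|X_0+1/S_n|<1$ as $|\alpha-1/(\tildea{S}_n z)|<1$, which gives the uniform lower bound $|\alpha-w|>1/(|\tildea{S}_n|^2(1+|\alpha|))$, i.e. $|z|>1/(|\tildea{S}_n|(1+|\alpha|))\ge 1/(1+2|\tildea{S}_n|)$. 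It then applies Lemma \ref{log bound for some integral: lemma}. Your split at $|z|=1/2$, combined with $|z|\ge|u|/2$, extracts the same lower bound in two steps.

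The genuine difference is the $O(1)$ half of the minimum. The paper gets it from $x^2\le 1+x^4$ with $x=|(X_0-X_2)S_n|^{-1}$, together with the exact identity $\mathbb{E}_2\bigl[|(X_0-X_2)S_n|^{-4}\mathbbm{1}_{O_n}\bigr]=\mathbb{P}_2[O_n]\le 1$ of Proposition \ref{main term being probability}. That identity rests on an area-theorem change of variables using the full structure of $O_n$. You instead get the $O(1)$ bound directly from your decomposition: the inner region is empty when $|\tildea{S}_n|<1/2$, and $\log(1+2|\tildea{S}_n|)/|\tildea{S}_n|^2$ is bounded when $|\tildea{S}_n|\ge 1/2$. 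Your version is self-contained and uses only the consequence $|S_n|>1/2$ of $O_n$. The paper's version is shorter, because that identity is needed anyway for $M_n$.
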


\begin{proof}
If $X_0 \notin \mb{A}_n$, then $O_n$ is empty and the claim follows trivially. Hence, assume $X_0 \in \mb{A}_n$. We first write $(X_0 - X_2)S_n = 1 + (X_0 - X_2)\tildea{S}_n$. Now using the independence of $\tildea{S}_n$ with $X_2$, and conditioning on $X_0,X_3, \cdots, X_n$, we get 
\begin{align*}
         \mathbb{E}_2\left[  \frac{1}{|(X_0-X_2)S_n|^2} \mathbbm{1}_{{O}_n}
     \right]&=\frac{1}{|\tildea{S}_n|^2}\mathbb{E}_2\left[  \frac{1}{\left|(X_0-X_2)+ \frac{1}{\tildea{S}_n}\right|^2} \mathbbm{1}_{O_n}
     \right]\\
     &= \frac{1}{\pi|\tildea{S}_n|^2} \int_{O_{2,n}} \frac{1}{|\alpha-w|^2} \quad dA(w),
    \end{align*}
    where $\alpha= X_0+ \frac{1}{\tildea{S}_n} $ and $O_{2,n}$ is as given by \eqref{O2n-def}. Now, by Lemma \ref{log bound for some integral: lemma}, we can bound the above integral by
    \begin{align}\label{inf sup}
         \int_{O_{2,n}} \frac{1}{|\alpha-w|^2} \quad dA(w)&\lesssim  \log\left(\frac{\sup_{w \in O_{2,n}} |\alpha-w|}{\inf_{w \in O_{2,n}} |\alpha-w|}\right) \lesssim \log\left(\frac{1+|\alpha|}{\inf_{w \in O_{2,n}} |\alpha-w|}\right) 
    \end{align}
    
  Using that $w\in O_{2,n}$ and a simple algebraic manipulation, we obtain 
    \begin{align*}
     \left| \alpha - \frac{1}{\tildea{S}_n(1+(X_0-w)\tildea{S}_n)} \right| =\left| X_0 + \frac{1}{\tildea{S}_n} - \frac{1}{\tildea{S}_n(1+(X_0-w)\tildea{S}_n)} \right|
   = \left| X_0 + \frac{X_0-w}{1+(X_0-w)\tildea{S}_n} \right| 
    &< 1.
\end{align*}

The triangle inequality now implies 
\[ |\tildea{S}_n(1+(X_0-w)\tildea{S}_n)| > \frac{1}{1+ |\alpha|}.\]
Hence, we get
 \begin{align}\label{lb}
       |\alpha-w|= \left|\frac{1+(X_0-w)\tildea{S}_n}{\tildea{S}_n} \right| > \frac{1}{|\tildea{S}_n|^2(1+|\alpha|)}. 
    \end{align}
    
Putting \eqref{lb} and \eqref{inf sup} together, we obtain
\begin{align*}
    \mathbb{E}_2\left[  \frac{1}{|(X_0-X_2)S_n|^2} \mathbbm{1}_{O_n}
     \right] \lesssim \frac{1}{|\tildea{S}_n|^2}\log(|\tildea{S}_n|^2(1+|\alpha|)^2)  \lesssim  \frac{\log(1+2|\tildea{S}_n|)}{|\tildea{S}_n|^2}.
\end{align*}
Finally, to show $\mathbb{E}_2\left[  \frac{1}{|(X_0-X_2)S_n|^2} \mathbbm{1}_{O_n}
     \right] \lesssim 1$, we simply use the bound 
     $x^2\leq 1+ x^4$ along with                                                                                                                                                                                                                                                                                                                                                                                                                                                                                                                                                                                                                                                                                                                                                                                                                                                                                                                                                                                                                                                                                                                                                                                                                                                                                                                                                                                                                                                                                                                                                                                                                                                                                                                                                                                                                                                                                                                                                 Proposition \ref{main term being probability}.
\end{proof}


\section{Asymptotic of \texorpdfstring{$M_n$}{Mn}} \label{The-M-Analysis}

In this section, we obtain the asymptotic of the term $M_n$. We prove that
\begin{equation}\label{M-asymp}
    M_n \sim \frac{C}{\sqrt{n}}
\end{equation}
where $C = \sqrt{\frac{2}{\pi} Var(\log|1-X_1|)}$. Using Proposition \ref{main term being probability}, we know that $M_n = \mb{P}(O_n)$. Recall from \eqref{definition of O_n} that 
\begin{align}\label{main probability event}
    \mb{P}[O_n]= \mb{P}\left[ |S_n(X_0)|< |Q_n(X_0)|, \hspace{2mm}  \left| X_0+\frac{1}{S_n(X_0)} \right|<1, \hspace{2mm}X_0 \in \mb{A}_n \right].
\end{align}

Recalling the notations from \eqref{def of S_n and S_n hat}, note that conditional on $X_0$, using \eqref{mean of 1/z-X}, $S_n$ is a random walk with mean $(n-1)\overline{X}_0$. Hence, it will concentrate around $(n-1)\overline{X}_0$ with high probability. We quantify it as follows. Let $\alpha \in (3/4, 1)$ and $p \in (3/2\alpha,2)$. Then, using Lemma \ref{lemma: p<2 moment bound for uniform on disk}, Lemma \ref{BE-ineq} and Markov inequality, 

For ease of notation, we write $S_n$ for $S_n(X_0)$ (see section \ref{notations}). Then,  

\begin{align}\label{use-of-BDG}
    \mb{P}\left({|S_n-(n-1)\overline{X_0}| > n^\alpha}\right)  &\leq \frac{\mb{E}\bigr[\left|{S_n-(n-1) \overline{X_0}}\right|^p \bigl]}{n^{p\alpha}} \\ \nonumber& \lesssim  {n^{1-p\alpha} \,\mb{E}\biggl[\left|{\frac{1}{X_0-X_2}-\overline{X_0}}\right|^p} \biggr] \\\nonumber
     & \lesssim n^{1-p\alpha} \\\nonumber
     & = o\biggl(\frac{1}{\sqrt{n}}\biggr).\nonumber
\end{align}

Hence, 
\begin{align*}
    \left|\mb{P}(O_n) - \mb{P}\left(O_n \mathbbm{1}_{|S_n- (n-1) \overline{X_0}| \leq n^\alpha}\right)\right| 
    \leq \mb{P}\left({|S_n- (n-1) \overline{X_0}| > n^\alpha}\right) = o\biggl(\frac{1}{\sqrt{n}}\biggr).
\end{align*}

Next, we note that 
\[ \biggl|\mb{P}\left(O_n \mathbbm{1}_{|S_n- (n-1) \overline{X_0}| \leq n^\alpha}\right) - \mb{P}\left(O_n \mathbbm{1}_{|S_n- (n-1) \overline{X_0}| \leq n^\alpha}\mathbbm{1}_{|X_0| <  1 -\frac{4}{n}}\right) \biggr| \leq \mb{P}\bigl[ |X_0| \geq 1 - \frac{4}{n}\bigr] = o\biggl(\frac{1}{\sqrt{n}}\biggr). \]

Furthermore, on the event $\{|S_n- (n-1) \overline{X_0}| \leq n^\alpha, X_0 \in \mb{A}_n\}$ we have,
\begin{align}\label{lower bound for |S_n|}
    |S_n| \geq (n-1)|\overline{X_0}|- n^\alpha \geq \frac{n}{2}-n^\alpha \geq \frac{n}{4}
\end{align}
and 
\begin{align}\label{upper bound for |S_n|}
    |S_n| \leq (n-1)|\overline{X_0}| +n^\alpha \leq 2n.
\end{align}
Hence, the event $\{|S_n- (n-1) \overline{X_0}| \leq n^\alpha, X_0 \in \mb{A}_n, |X_0| < 1 - \frac{4}{n}\}$ implies the event $\{|X_0 + \frac{1}{S_n}| <1\}$. 
Therefore, we conclude that 
\[ \mb{P}(O_n) = \mathbb{P}\left(|S_n|< |Q_n|, 1- \kappa\sqrt{\frac{\log n}{n}} \leq |X_0| < 1 - \frac{4}{n} , {|S_n- (n-1) \overline{X_0}| \leq n^\alpha}\right) + o\biggl(\frac{1}{\sqrt{n}}\biggr). \]

Also, from \eqref{upper bound for |S_n|} and \eqref{lower bound for |S_n|}, we note that the probability on the right-hand side in the above equation can be sandwiched between 
\[ \mathbb{P}\left( cn< |Q_n|, 1- \kappa\sqrt{\frac{\log n}{n}} \leq |X_0| < 1 - \frac{4}{n} , {|S_n- (n-1) \overline{X_0}| \leq n^\alpha}\right)\]
where $c$ is either $2$ or $1/4$. In the same manner as above, we can show that
\begin{align}
    &\mathbb{P}\left( cn< |Q_n|, 1- \kappa\sqrt{\frac{\log n}{n}} \leq |X_0| < 1 - \frac{4}{n} , {|S_n- (n-1) \overline{X_0}| \leq n^\alpha}\right) \\
    & \qquad\qquad \qquad\qquad= \mb{P}(cn < |Q_n|, X_0 \in \mb{A}_n) + o\biggl(\frac{1}{\sqrt{n}}\biggr).\nonumber
\end{align}

Finally, it follows from Proposition \ref{edgeworth-prop} with the choice of $C_n = \log(cn)/n$, $c\in\{ 2, 1/4\}$, that 
\[ \mb{P}(cn < |Q_n|, X_0 \in \mb{A}_n)\sim \frac{C}{\sqrt{n}}\]
with $C = \sqrt{\frac{2}{\pi} Var(\log|1-X_1|)}$. This completes the proof of the claim \eqref{M-asymp}.

\section{Estimating the error terms \texorpdfstring{$R_n^0$,$R_n^1$}{Rn0, Rn1}.} \label{last-analysis-of-remainder}
We now prove that error terms $R_n^0$ and $R_n^1$ do not contribute to the asymptotic of $\mathcal{T}_n(0)$ given by \eqref{T-expansion}.
We prove that $R_n^0 = o(1/\sqrt{n})$ and $R_n^1 = o(1/n^{3/2})$. 
This together with \eqref{T-expansion} and \eqref{M-asymp} implies that $\mathcal{T}_n(0) \sim C\sqrt{n}$ with $C = \sqrt{\frac{2}{\pi} Var(\log|1-X_1|)}$. Also, in view of Proposition \ref{R_1< R_0}, it suffices to prove 
\begin{equation}\label{strong-claim}
    R_n^0 = o(1/n^{3/2}).
\end{equation}

To this end, we use Proposition \ref{key proposition for soft error term}. Using \eqref{Inverse-RW-estimate}, we have  
\begin{align} \label{key-bound-on-R_n}
        R_n^0 \lesssim  \mb{E}\biggl[\min \left \{ 1, \frac{\log(1+2|\tildea{S}_n)|}{|\tildea{S}_n|^2}  \right \}\mathbbm{1}_{X_0 \in \mb{A}_n}\biggr].
    \end{align}
Note that conditional on $X_0$, the random walk $\tildea{S}_n \sim n \overline{X}_0$ almost surely. This heuristically suggests that the right-hand side above is $O(\log(n)/n^2)$, which is clearly $o(1/n^{3/2})$. But, since we do not have any lower bound on $\tildea{S}_n$, the almost sure convergence $\tildea{S}_n \sim n \overline{X_0}$ does not suffice to invoke the dominated convergence theorem in order to obtain a $O(\log(n)/n^2)$ bound on the expected value appearing in \eqref{key-bound-on-R_n}. We are, in fact, not able to prove an $O(\log(n)/n^2)$ bound on $R_n^0$. The final estimate that we obtain on $R_n^0$ is only marginally better than $1/n^{3/2}$, see \eqref{marginally-better} below. However, the softer bound $R_n^0 = o(1/\sqrt{n})$ can be obtained easily as follows. For $\delta > 0 $, we use \eqref{key-bound-on-R_n} to obtain 

\begin{align} \label{key-bound-on-R_n-second}
        R_n^0 &\lesssim \mb{E}\biggl[\min \left \{ 1, \frac{\log(1+2|\tildea{S}_n|)}{|\tildea{S}_n|^2}  \right \}\mathbbm{1}_{X_0 \in \mb{A}_n}\biggr] \\ 
        &\lesssim \mb{E}\biggl[ \frac{\log(1+2|\tildea{S}_n|)}{|\tildea{S}_n|^2}  \mathbbm{1}_{X_0 \in \mb{A}_n} \mathbbm{1}_{|\tildea{S}_n| \geq  \delta n}\biggr] + \mb{E}\biggl[ \mathbbm{1}_{X_0 \in \mb{A}_n}\mathbbm{1}_{|\tildea{S}_n| < \delta n}\biggr]\nonumber\\
        & \lesssim \mb{E}\biggl[ \frac{1}{|\tildea{S}_n|^{3/2}}   \mathbbm{1}_{|\tildea{S}_n| \geq  \delta n}\biggr] + \mb{P}\bigl[ |\tildea{S}_n| < \delta n, X_0 \in \mb{A}_n\bigr]\nonumber\\ 
        &\lesssim \frac{1}{n^{3/2}} + \mb{P}\bigl[ |\tildea{S}_n| < \delta n, |X_0|  \geq \frac{1}{2}\bigr], \nonumber
    \end{align}
    
    where we have used $\log(1+ x) \lesssim \sqrt{x}$ for all $x>0$.
    Also note that for $\delta$ small enough, conditional on $|X_0| \geq 1/2 $, the event $ \{|\tildea{S}_n| < \delta n \}$ is a large deviation event since it implies $\{|\tildea{S}_n - (n-2)\overline{X}_0| > (\frac{1}{2} - \delta) n \}$. Then, similarly as in \eqref{use-of-BDG}, it follows that $\mb{P}[ |\tildea{S}_n - (n-2)\overline{X}_0| > (\frac{1}{2} - \delta) n \hspace{1mm} \bigl|\hspace{1mm}X_0] = O(1/n^{1-\epsilon})$ for any $\epsilon >0$. Hence, using \eqref{key-bound-on-R_n-second}, we conclude that $R_n^0 = o(1/\sqrt{n})$. \\

    The above argument is not sufficient to prove the stronger estimate \eqref{strong-claim}. We need a better estimates on $\mb{P}\bigl[ |\tildea{S}_n| < \delta n, |X_0|  \geq \frac{1}{2}\bigr]$ than used in the previous paragraph: the weaker $O(1/n^{1-\epsilon})$ bound is not useful for \eqref{strong-claim}. To this end, we need better probability estimates on the large deviation event $\mb{P}[ |\tildea{S}_n - (n-2)\overline{X}_0| > (\frac{1}{2} - \delta) n \hspace{1mm} \bigl|\hspace{1mm}X_0]$. Using the rotational symmetry of random variables $X_k$, it is enough to look at $\Re(\tildea{S}_n(r))$, i.e. $\Re(\tildea{S}_n)$ conditional on $X_0 = r$. The $\Re(\tildea{S}_n(r))$ is a random walk with increments $\Re(1/(r-X_k))$. Then, we fall in the setting of Section \ref{section:A Large Deviated anticoncentration/small ball estimate}. By using Theorem \ref{main-use-result-cline-hsing}, one can obtain $\mb{P}[ |\tildea{S}_n - (n-2)\overline{X}_0| > (\frac{1}{2} - \delta) n \hspace{1mm} \bigl|\hspace{1mm}X_0] = O(1/n)$, see \eqref{intermediate-bound}, which is an improvement over above mentioned $O(1/n^{1-\epsilon})$ bound. However, even this $O(1/n)$ bound is also not sufficient to give us \eqref{strong-claim}. In fact, while considering $\mb{P}\bigl[ |\tildea{S}_n| < \delta n, |X_0|  \geq \frac{1}{2}\bigr]$, we not only have to account for the large deviation behaviour of $\Re(\tildea{S}_n(r))$ but also the fact that $\Re(\tildea{S}_n(r))$ is squeezed in an interval of relatively small size. This observation gives us the final leverage which allows us to prove \eqref{strong-claim}. Our requirement is precisely formulated in Proposition \ref{dev-anti-conc}. \\
    
    Relying on Proposition \ref{dev-anti-conc}, we can now prove \eqref{strong-claim}. For $\epsilon>0$ small, we split $R_n^0$ as 
    \begin{multline}\label{broken at n delta}
     R_n^0= \mathbb{E}\left[ \frac{1}{|S_n(X_0-X_2)|^2} \mathbbm{1}_{O_n} \mathbbm{1}_{\left\{|\tildea{S}_n| \geq n^{\frac{3}{4}+ \epsilon} \right\} }\right]\\ + \mathbb{E}\left[ \frac{1}{|S_n(X_0-X_2)|^2} \mathbbm{1}_{O_n }\mathbbm{1}_{\left\{|\tildea{S}_n| <n^{\frac{3}{4}+ \epsilon}\right \}}\right].
\end{multline}

For the first term in the right-hand side above, we use Proposition \ref{key proposition for soft error term} to obtain,
\begin{align*}
    \mathbb{E}\left[ \frac{1}{|S_n(X_0-X_2)|^2} \mathbbm{1}_{O_n} \mathbbm{1}_{\left\{|\tildea{S}_n| \geq n^{\frac{3}{4}+ \epsilon} \right\} }\right] \lesssim \mathbb{E}\left[ \frac{\log(1+2|\tildea{S}_n|)}{|\tildea{S}_n|^2} \mathbbm{1}_{\left\{|\tildea{S}_n| \geq n^{\frac{3}{4}+ \epsilon} \right\} }\right].
\end{align*}
Choosing $p>0$ small enough such that $\big(\frac{3}{4}+ \epsilon\big)(2-p) > \frac{3}{2}$ and using the fact that $\log(1+x) \lesssim x^p$ for all $x>0$, we obtain
\begin{align*}
    \mathbb{E}\left[ \frac{1}{|S_n(X_0-X_2)|^2} \mathbbm{1}_{O_n} \mathbbm{1}_{\left\{|\tildea{S}_n| \geq n^{\frac{3}{4}+ \epsilon} \right\} }\right] &\lesssim \mathbb{E}\left[ \frac{1}{|\tildea{S}_n|^{2-p}} \mathbbm{1}_{\left\{|\tildea{S}_n| \geq n^{\frac{3}{4}+ \epsilon} \right\} }\right]\\
    &\lesssim \frac{1}{n^{(2-p)(\frac{3}{4} + \epsilon)}} = o\left(\frac{1}{n^{\frac{3}{2}}}\right).
\end{align*}

For the second term on the right-hand side of \eqref{broken at n delta}, using Proposition \ref{key proposition for soft error term} again, 
\begin{equation}\label{first-term-of-split}
     \mathbb{E}\left[ \frac{1}{[S_n(X_0-X_2)]^2} \mathbbm{1}_{O_n } \mathbbm{1}_{\left\{|\tildea{S}_n| < n^{\frac{3}{4}+ \epsilon} \right\} }\right] \lesssim \mathbb{P} \left(X_0 \in \mb{A}_n,| \tildea{S}_n| < n^{\frac{3}{4}+ \epsilon} \right). 
\end{equation}

Using the rotational invariance of random variables $X_k$, for any fixed $\alpha> \frac{1}{2}$,
\begin{align}\label{follow-up-on-first-term-of-split}
 \mathbb{P} &\left(X_0 \in \mb{A}_n,| \tildea{S}_n| < n^{\frac{3}{4}+ \epsilon}\right) \\ &= 2\int_{1 - \kappa \sqrt{\frac{\ logn}{n}}}^1\mathbb{P} \left(| \tildea{S}_n(r)| < n^{\frac{3}{4}+ \epsilon}\right) r dr\nonumber \\ & = 2\int_{1 - \kappa \sqrt{\frac{\ logn}{n}}}^{1 -\frac{1}{n^\alpha}}\mathbb{P} \left(| \tildea{S}_n(r)| < n^{\frac{3}{4}+ \epsilon} \right) r dr + 2\int_{1 -\frac{1}{n^\alpha}}^1\mathbb{P} \left(| \tildea{S}_n(r)| < n^{\frac{3}{4}+ \epsilon} \right) r dr. \nonumber\\ \nonumber
      \end{align} 
 For $r\in [1 - \frac{1}{n^\alpha}, 1]$, we use the weaker bound $\mathbb{P} (\{| \tildea{S}_n(r)| < n^{\frac{3}{4}+ \epsilon} \}) = O(1/n^{1-\epsilon})$ as proved in \eqref{use-of-BDG} to obtain 
 \[  \int_{1 -\frac{1}{n^\alpha}}^1\mathbb{P} \left(\left\{| \tildea{S}_n(r)| < n^{\frac{3}{4}+ \epsilon} \right \}\right) r dr = O\biggl(\frac{1}{n^{1-\epsilon}} \times \frac{1}{n^{\alpha}}\biggr) = o\biggl(\frac{1}{n^{{3/2}}}\biggr)\]
 for $\epsilon$ small enough. \\
For $r\in [1 - \kappa \sqrt{\frac{\log n}{n}}, 1- \frac{1}{n^{\alpha}}]$, using the notation $W_n(r)$ from Section \ref{section:A Large Deviated anticoncentration/small ball estimate} and using Proposition \ref{dev-anti-conc},
\begin{align}\label{we-get-only-log-bound}
    \mathbb{P} \left(\left\{| \tildea{S}_n(r)| < n^{\frac{3}{4}+ \epsilon} \right \}\right) & \leq  \mathbb{P} \left(\left\{| Re(\tildea{S}_n(r))| < n^{\frac{3}{4}+ \epsilon} \right \}\right) \\ & =  \mathbb{P} \left(\left\{ W_n(r) \in [nr - n^{\frac{3}{4} + \epsilon}, nr + n^{\frac{3}{4} + \epsilon}]\right \}\right) \nonumber\\ & \lesssim \frac{1}{n (\log n)^\gamma},\nonumber
\end{align}
where $\gamma$ is any arbitrarily large constant. This implies that 
\begin{equation} \label{marginally-better}
\int_{1 - \kappa \sqrt{\frac{\ logn}{n}}}^{1 -\frac{1}{n^\alpha}}\mathbb{P} \left(\left\{| \tildea{S}_n(r)| < n^{\frac{3}{4}+ \epsilon} \right \}\right) r dr = O\biggl( \frac{1}{n (\log n)^\gamma} \times\sqrt{\frac{\log n }{ n}}\biggr)  = o\biggl(\frac{1}{n^{3/2}}\biggr).
\end{equation}
Plugging the above estimates in \eqref{follow-up-on-first-term-of-split}, \eqref{first-term-of-split}, and \eqref{broken at n delta} completes the proof of \eqref{strong-claim}.

\begin{remark}
    The error bound in the asymptotic of the main term $M_n \sim C/\sqrt{n}$ is in fact $O(1/n)$. This can be seen from Proposition \ref{edgeworth-prop}. However, the error terms $R_n^0, R_n^1$ in view of the estimate \eqref{strong-claim} are more delicate. The fact that we only get $O(1/(\log n)^\gamma)$ estimate for the error term in \eqref{mainresult} is stemming from \eqref{we-get-only-log-bound}. 
\end{remark}
\vspace{1mm}

\section{Computation of the Variance and proof of Theorem \ref{main-thm}} \label{some-comp-last}

\begin{Lemma}\label{computation for exact asymptotics}
Let $X$ be a uniform random variable in the unit disk. Then 
    \begin{equation*}
I := \mathbb{E}\left[ [\log|1-X|]^2\right] = \frac{1}{\pi} \int_0^1 \int_0^{2\pi} \left( \log \left| 1 - r e^{i\theta} \right| \right)^2 \, r \, d\theta \, dr = \frac{\zeta(2) - 1}{2}= \frac{\pi^2 - 6}{12}.
\end{equation*}
\end{Lemma}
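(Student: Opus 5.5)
I will compute the inner angular integral first, using the Fourier expansion of $\log|1-re^{i\theta}|$ for fixed $r\in[0,1)$. Since $\log|1-re^{i\theta}| = \Re\log(1-re^{i\theta}) = -\sum_{k\ge1}\frac{r^k}{k}\cos(k\theta)$, and this series converges in $L^2[0,2\pi]$ for each $r<1$, Parseval's identity gives
\[
\int_0^{2\pi}\bigl(\log|1-re^{i\theta}|\bigr)^2\,d\theta = \pi\sum_{k\ge1}\frac{r^{2k}}{k^2}.
\]
The constant term vanishes, which matches the mean value property of the harmonic function $z\mapsto\log|1-z|$ on $\mb{D}$. The cross terms drop out by orthogonality of $\cos(k\theta)$ and $\cos(j\theta)$ for $j\neq k$, and each $\cos^2$ contributes $\pi$.

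Next I integrate in $r$ against $\frac{1}{\pi}r\,dr$. All terms are nonnegative, so Tonelli lets me swap the sum and the integral:
\[
I=\sum_{k\ge1}\frac{1}{k^2}\int_0^1 r^{2k+1}\,dr=\sum_{k\ge1}\frac{1}{2k^2(k+1)}.
\]
The only point needing care is that the angular integral at $r=1$ involves a log singularity. This is harmless: it is a single radius of measure zero, and monotone convergence as $r\uparrow1$ handles it in any case.

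Finally I sum the series by partial fractions. From
\[
\frac{1}{k^2(k+1)}=\frac{1}{k^2}-\frac{1}{k}+\frac{1}{k+1},
\]
the last two terms telescope to $-1$. Hence $\sum_k \frac{1}{k^2(k+1)}=\zeta(2)-1$, and so $I=\frac{\zeta(2)-1}{2}=\frac{\pi^2-6}{12}$.

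There is no real obstacle here. The only step requiring justification is the interchange of sum and integral, which is covered by nonnegativity. As a consequence, since $u(1)=0$, one gets $Var(\log|1-X_1|)=I$.
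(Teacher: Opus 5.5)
Your proposal is correct and follows essentially the same route as the paper. Both arguments use the Fourier expansion $\log|1-re^{i\theta}| = -\sum_{k\ge1}\frac{r^k}{k}\cos(k\theta)$, orthogonality (Parseval) to get $\pi\sum_{k\ge1} r^{2k}/k^2$, termwise radial integration to $\sum_{k\ge1}\frac{1}{2k^2(k+1)}$, and partial-fraction telescoping to $\zeta(2)-1$. The differences are cosmetic: the paper passes through $\mathrm{Li}_2(r^2)$ and the substitution $u=r^2$, whereas you integrate $r^{2k+1}$ directly and justify the interchange by Tonelli, which is slightly cleaner.
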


\begin{proof}
    We begin by considering the Taylor expansion of the complex logarithm:
\begin{equation*}
\log(1 - z) = -\sum_{n=1}^\infty \frac{z^n}{n}, \quad |z| < 1.
\end{equation*}
Since $z\in \D$, substituting into polar co ordinates \( z = r e^{i\theta} \), where \( 0 \leq r < 1 \) we get
\begin{equation*}
\log(1 - r e^{i\theta}) = -\sum_{n=1}^\infty \frac{r^n e^{i n\theta}}{n}.
\end{equation*}
Taking the real part of the equation above gives
\begin{equation}\label{fourierexpansion}
\log|1 - r e^{i\theta}| = \operatorname{Re}\left[ \log(1 - r e^{i\theta}) \right] = -\sum_{n=1}^\infty \frac{r^n}{n} \cos(n\theta).
\end{equation}
Now we compute the squared integral using the expansion in \eqref{fourierexpansion}
\begin{equation}
\int_0^{2\pi} \left( \log|1 - r e^{i\theta}| \right)^2 d\theta = \int_0^{2\pi} \left( \sum_{n=1}^\infty \frac{r^n}{n} \cos(n\theta) \right)^2 d\theta.
\end{equation}
Expanding the square and switching the sum and the integral, we get:
\begin{equation*}
\sum_{m=1}^\infty \sum_{n=1}^\infty \frac{r^{m+n}}{mn} \int_0^{2\pi} \cos(m\theta) \cos(n\theta) d\theta
\end{equation*}
Using the orthogonality of cosine functions,
we see that \eqref{fourierexpansion} transforms to
\begin{equation}\label{Li2representation}
\int_0^{2\pi} \left( \log|1 - r e^{i\theta}| \right)^2 d\theta = \pi \sum_{n=1}^\infty \left( \frac{r^n}{n} \right)^2 = \pi \sum_{n=1}^\infty \frac{r^{2n}}{n^2}.
\end{equation}
This sum is the dilogarithm function, i.e.,
\begin{equation*}
\int_0^{2\pi} \left( \log|1 - r e^{i\theta}| \right)^2 d\theta = \pi \, \text{Li}_2(r^2)
\end{equation*}
Plugging this in $I$ and using the change variable: \( u = r^2 \Rightarrow r \, dr = \frac{1}{2} du \)
\begin{align*}
I &= \frac{1}{\pi} \int_0^1 \left( \pi \, \text{Li}_2(r^2) \right) r \, dr = \int_0^1 \text{Li}_2(r^2) \, r \, dr =\frac{1}{2} \int_0^1 \text{Li}_2(u) \, du
\end{align*}
Since $u \in [0,1]$, the sum in \eqref{Li2representation} converges uniformly, and we can use MCT to switch the integral and the sum to get
\begin{align*}
\int_0^1 \text{Li}_2(u) \, du &= \int_0^1 \sum_{n=1}^\infty \frac{u^n}{n^2} du = \sum_{n=1}^\infty \frac{1}{n^2} \int_0^1 u^n du = \sum_{n=1}^\infty \frac{1}{n^2(n+1)}
\end{align*}
 We use partial fractions to compute the sum.
\begin{align*}
\sum_{n=1}^\infty \frac{1}{n^2(n+1)} = \sum_{n=1}^\infty \left( \frac{1}{n^2} - \frac{1}{n} + \frac{1}{n+1} \right)  = \zeta(2) - 1
\end{align*}
Therefore, putting everything together, we get the desired equality.
\begin{equation*}
I = \frac{1}{\pi} \int_0^1 \int_0^{2\pi} \left( \log |1 - r e^{i\theta}| \right)^2 \, r \, d\theta \, dr = \frac{1}{2} \left(\zeta(2) - 1 \right) \qedhere
\end{equation*}

\end{proof}


\begin{proof}[Proof of Theorem \ref{main-thm}]We now assemble the results from the previous sections to complete the proof of the main theorem. 
By Lemma~\ref{tilde-C}, we may replace the component count by its modified version:
\begin{align*}
    \frac{1}{\sqrt{n}}\mathbb{E}[\mathscr{C}(\Lambda_n)]
    =
    \frac{1}{\sqrt{n}}\mathbb{E}[\widetilde{\mathscr{C}}(\Lambda_n)]
    + o(1),
\end{align*}
where $\widetilde{\mathscr{C}}(\Lambda_n)$ is defined in \eqref{def-tilde-for-C}. Now, combining Proposition~\ref{Kac-Rice Formula} (Kac--Rice formula) with Proposition~\ref{key-prop}, we obtain
\begin{align*}
    \frac{1}{\sqrt{n}}\mathbb{E}[\mathscr{C}(\Lambda_n)]
    =
    \frac{1}{\sqrt{n}} \lim_{\epsilon \to 0} \mathcal{T}_n(\epsilon) + o(1)
    =
    \frac{1}{\sqrt{n}}\mathbb{E}\left[
        \left| 1 + \frac{R_n(X_0)}{(S_n(X_0))^2} \right|^2
        \mathbbm{1}_{O_n}
    \right]
    + o(1),
\end{align*}
with $R_n$, $S_n$ and $\mathcal{T}_n(\epsilon)$ are as in \eqref{def of S_n and S_n hat} and \eqref{trick} respectively. Expanding the square and invoking the decomposition from \eqref{T-expansion}, we arrive at
\begin{align}\label{the last equation}
    \frac{1}{\sqrt{n}}\mathbb{E}[\mathscr{C}(\Lambda_n)]
    =
    \sqrt{n}\,M_n
    +
    \sqrt{n}\,O(R_n^0)
    +
    n\sqrt{n}\,O(R_n^1)
    +
    o(1).
\end{align}
Next, we will show $M_n$ has the principal contribution in component count and $R_n^0, R_n^1$ are negligible compared to $M_n$. The asymptotics of $M_n$ were established in Section~\ref{The-M-Analysis}, while the terms $R_n^0$ and $ R_n^1$ were handled in Section~\ref{last-analysis-of-remainder}. Combining the estimates \eqref{strong-claim}, \eqref{T-expansion} with  Proposition~\ref{R_1< R_0}, and passing to the limit in \eqref{the last equation}, we conclude that
\begin{align*}
    \lim_{n \to \infty}
    \frac{1}{\sqrt{n}}\mathbb{E}[\mathscr{C}(\Lambda_n)]
    =
    \sqrt{\frac{2}{\pi}\,\mathrm{Var}\bigl(\log|1 - X_1|\bigr)}
    =
    \sqrt{\frac{\zeta(2) - 1}{\pi}}.
\end{align*}
Finally, in the last equality above, the explicit numerical value of $\mathrm{Var}\bigl(\log|1 - X_1|\bigr)$ is computed in Lemma~\ref{computation for exact asymptotics}.
\end{proof}

\textbf{Acknowledgment}. The first author (S.G.) was partially supported by the ANRF Core Research Grant CRG/2023/002667 during his postdoctoral stay at the Indian Statistical Institute Bangalore Campus.

\bibliographystyle{siam}
\bibliography{ref}

@article {Nazarov-Sodin,
    AUTHOR = {Nazarov, Fedor and Sodin, Mikhail},
     TITLE = {On the number of nodal domains of random spherical harmonics},
   JOURNAL = {Amer. J. Math.},
  FJOURNAL = {American Journal of Mathematics},
    VOLUME = {131},
      YEAR = {2009},
    NUMBER = {5},
     PAGES = {1337--1357},
      ISSN = {0002-9327,1080-6377},
   MRCLASS = {60F10 (33C55 43A90 60D05)},
  MRNUMBER = {2555843},
       DOI = {10.1353/ajm.0.0070},
       URL = {https://doi.org/10.1353/ajm.0.0070},
}

@article {GhoshRamachandran,
    AUTHOR = {Ghosh, Subhajit and Ramachandran, Koushik},
     TITLE = {Number of components of polynomial lemniscates: a problem of
              {E}rd\"os, {H}erzog, and {P}iranian},
   JOURNAL = {J. Math. Anal. Appl.},
  FJOURNAL = {Journal of Mathematical Analysis and Applications},
    VOLUME = {540},
      YEAR = {2024},
    NUMBER = {1},
     PAGES = {Paper No. 128571, 21},
      ISSN = {0022-247X,1096-0813},
   MRCLASS = {30C85 (31A15)},
  MRNUMBER = {4758329},
MRREVIEWER = {Malik\ Younsi},
       DOI = {10.1016/j.jmaa.2024.128571},
       URL = {https://doi.org/10.1016/j.jmaa.2024.128571},
}

@article {Kabluchko-Wigman,
    AUTHOR = {Kabluchko, Zakhar and Wigman, Igor},
     TITLE = {Asymptotics for the expected number of nodal components for
              random lemniscates},
   JOURNAL = {Int. Math. Res. Not. IMRN},
  FJOURNAL = {International Mathematics Research Notices. IMRN},
      YEAR = {2022},
    NUMBER = {3},
     PAGES = {2337--2375},
      ISSN = {1073-7928,1687-0247},
   MRCLASS = {60G60 (30D99 30F99 37F99 60G15)},
  MRNUMBER = {4373237},
       DOI = {10.1093/imrn/rnaa146},
       URL = {https://doi.org/10.1093/imrn/rnaa146},
}

@article {Michelen-Xuan-Truong-Almostsurebehaviorofthezerosofiteratedderivativesof,
    AUTHOR = {Michelen, Marcus and Vu, Xuan-Truong},
     TITLE = {Almost sure behavior of the zeros of iterated derivatives of
              random polynomials},
   JOURNAL = {Electron. Commun. Probab.},
  FJOURNAL = {Electronic Communications in Probability},
    VOLUME = {29},
      YEAR = {2024},
     PAGES = {Paper No. 27, 10},
      ISSN = {1083-589X},
   MRCLASS = {60B99 (30C15)},
  MRNUMBER = {4762159},
       DOI = {10.1214/24-ECP596},
       URL = {https://doi.org/10.1214/24-ECP596},
}

@article {Byun-Sung-Soo-Lee-Reddy-their-higher-derivatives,
    AUTHOR = {Byun, Sung-Soo and Lee, Jaehun and Reddy, Tulasi Ram},
     TITLE = {Zeros of random polynomials and their higher derivatives},
   JOURNAL = {Trans. Amer. Math. Soc.},
  FJOURNAL = {Transactions of the American Mathematical Society},
    VOLUME = {375},
      YEAR = {2022},
    NUMBER = {9},
     PAGES = {6311--6335},
      ISSN = {0002-9947,1088-6850},
   MRCLASS = {60G99 (30B20 30C15)},
  MRNUMBER = {4474893},
       DOI = {10.1090/tran/8674},
       URL = {https://doi.org/10.1090/tran/8674},
}

@article {Angst-Jurgen-Poly-Almostsurebehaviorofthecriticalpointsofrandom,
    AUTHOR = {Angst, J\"urgen and Malicet, Dominique and Poly, Guillaume},
     TITLE = {Almost sure behavior of the critical points of random
              polynomials},
   JOURNAL = {Bull. Lond. Math. Soc.},
  FJOURNAL = {Bulletin of the London Mathematical Society},
    VOLUME = {56},
      YEAR = {2024},
    NUMBER = {2},
     PAGES = {767--782},
      ISSN = {0024-6093,1469-2120},
   MRCLASS = {30C15 (60B10 60G57)},
  MRNUMBER = {4711583},
MRREVIEWER = {Sergey\ Berezin},
       DOI = {10.1112/blms.12963},
       URL = {https://doi.org/10.1112/blms.12963},
}

@article {Ghosh-numberofcomponents,
    AUTHOR = {Ghosh, Subhajit},
     TITLE = {On the number of components of random polynomial lemniscates},
   JOURNAL = {Electron. J. Probab.},
  FJOURNAL = {Electronic Journal of Probability},
    VOLUME = {29},
      YEAR = {2024},
     PAGES = {Paper No. 86, 24},
      ISSN = {1083-6489},
   MRCLASS = {60D05 (30C10)},
  MRNUMBER = {4761374},
       DOI = {10.1214/24-ejp1147},
       URL = {https://doi.org/10.1214/24-ejp1147},
}

@book {concentration-inequalities-BLMbook,
    AUTHOR = {Boucheron, St\'{e}phane and Lugosi, G\'{a}bor and Massart, Pascal},
     TITLE = {Concentration inequalities},
      NOTE = {A nonasymptotic theory of independence,
              With a foreword by Michel Ledoux},
 PUBLISHER = {Oxford University Press, Oxford},
      YEAR = {2013},
     PAGES = {x+481},
      ISBN = {978-0-19-953525-5},
   MRCLASS = {60E15 (60D05 60G15 60G50 60G70 62G20)},
  MRNUMBER = {3185193},
MRREVIEWER = {Sreenivasan Ravi},
       DOI = {10.1093/acprof:oso/9780199535255.001.0001},
       URL = {https://doi.org/10.1093/acprof:oso/9780199535255.001.0001},
}

@article {KLR,
    AUTHOR = {Krishnapur, Manjunath and Lundberg, Erik and Ramachandran,
              Koushik},
     TITLE = {Inradius of random lemniscates},
   JOURNAL = {J. Approx. Theory},
  FJOURNAL = {Journal of Approximation Theory},
    VOLUME = {299},
      YEAR = {2024},
     PAGES = {Paper No. 106018, 25},
      ISSN = {0021-9045,1096-0430},
   MRCLASS = {30C10 (31A15 60G60)},
  MRNUMBER = {4703355},
       DOI = {10.1016/j.jat.2024.106018},
       URL = {https://doi.org/10.1016/j.jat.2024.106018},
}

@article {crane-polynomialpreimage,
    AUTHOR = {Crane, Edward},
     TITLE = {The areas of polynomial images and pre-images},
   JOURNAL = {Bull. London Math. Soc.},
  FJOURNAL = {The Bulletin of the London Mathematical Society},
    VOLUME = {36},
      YEAR = {2004},
    NUMBER = {6},
     PAGES = {786--792},
      ISSN = {0024-6093,1469-2120},
   MRCLASS = {30C10 (26D05 30C85)},
  MRNUMBER = {2083754},
MRREVIEWER = {Albert\ Baernstein, II},
       DOI = {10.1112/S0024609304003509},
       URL = {https://doi.org/10.1112/S0024609304003509},
}

@book {Ahlfors,
    AUTHOR = {Ahlfors, Lars V.},
     TITLE = {Complex analysis},
    SERIES = {International Series in Pure and Applied Mathematics},
   EDITION = {Third},
      NOTE = {An introduction to the theory of analytic functions of one
              complex variable},
 PUBLISHER = {McGraw-Hill Book Co., New York},
      YEAR = {1978},
     PAGES = {xi+331},
      ISBN = {0-07-000657-1},
   MRCLASS = {30-01},
  MRNUMBER = {510197},
}

@article {Kabluchko-Seidel,
    AUTHOR = {Kabluchko, Zakhar and Seidel, Hauke},
     TITLE = {Distances between zeroes and critical points for random
              polynomials with i.i.d. zeroes},
   JOURNAL = {Electron. J. Probab.},
  FJOURNAL = {Electronic Journal of Probability},
    VOLUME = {24},
      YEAR = {2019},
     PAGES = {Paper No. 34, 25},
      ISSN = {1083-6489},
   MRCLASS = {60G99 (30B10 60B10 60G57)},
  MRNUMBER = {3940764},
MRREVIEWER = {Ofer\ Zeitouni},
       DOI = {10.1214/19-EJP295},
       URL = {https://doi.org/10.1214/19-EJP295},
}

@article {vonBahr-Esseen,
    AUTHOR = {von Bahr, Bengt and Esseen, Carl-Gustav},
     TITLE = {Inequalities for the {$r$}th absolute moment of a sum of
              random variables, {$1\leq r\leq 2$}},
   JOURNAL = {Ann. Math. Statist.},
  FJOURNAL = {Annals of Mathematical Statistics},
    VOLUME = {36},
      YEAR = {1965},
     PAGES = {299--303},
      ISSN = {0003-4851},
   MRCLASS = {60.20 (60.30)},
  MRNUMBER = {170407},
MRREVIEWER = {\v C.\ V.\ Stanojevi\'c},
       DOI = {10.1214/aoms/1177700291},
       URL = {https://doi.org/10.1214/aoms/1177700291},
}

@book {Petrov-sumsofindipendentrandomvariables,
    AUTHOR = {Petrov, V. V.},
     TITLE = {Sums of independent random variables},
    SERIES = {Ergebnisse der Mathematik und ihrer Grenzgebiete [Results in
              Mathematics and Related Areas]},
    VOLUME = {Band 82},
      NOTE = {Translated from the Russian by A. A. Brown},
 PUBLISHER = {Springer-Verlag, New York-Heidelberg},
      YEAR = {1975},
     PAGES = {x+346},
   MRCLASS = {60FXX (60GXX)},
  MRNUMBER = {388499},
}

@article {Bobkov-etal,
    AUTHOR = {Bobkov, S. G. and Chistyakov, G. P. and G\"otze, F.},
     TITLE = {Bounds for characteristic functions in terms of quantiles and
              entropy},
   JOURNAL = {Electron. Commun. Probab.},
  FJOURNAL = {Electronic Communications in Probability},
    VOLUME = {17},
      YEAR = {2012},
     PAGES = {no. 21, 9},
      ISSN = {1083-589X},
   MRCLASS = {60E10},
  MRNUMBER = {2943104},
       DOI = {10.1214/ECP.v17-2053},
       URL = {https://doi.org/10.1214/ECP.v17-2053},
}

@book {Hough-Krishnapur-Peres,
    AUTHOR = {Hough, J. Ben and Krishnapur, Manjunath and Peres, Yuval and
              Vir\'ag, B\'alint},
     TITLE = {Zeros of {G}aussian analytic functions and determinantal point
              processes},
    SERIES = {University Lecture Series},
    VOLUME = {51},
 PUBLISHER = {American Mathematical Society, Providence, RI},
      YEAR = {2009},
     PAGES = {x+154},
      ISBN = {978-0-8218-4373-4},
   MRCLASS = {60G55 (30B20 30C15 60B20 60F10 60G15 65H04 82B31)},
  MRNUMBER = {2552864},
MRREVIEWER = {Dmitry\ Beliaev},
       DOI = {10.1090/ulect/051},
       URL = {https://doi.org/10.1090/ulect/051},
}

@article {lundberg-ramachandran-JLMS17,
    AUTHOR = {Lundberg, Erik and Ramachandran, Koushik},
     TITLE = {The arc length and topology of a random lemniscate},
   JOURNAL = {J. Lond. Math. Soc. (2)},
  FJOURNAL = {Journal of the London Mathematical Society. Second Series},
    VOLUME = {96},
      YEAR = {2017},
    NUMBER = {3},
     PAGES = {621--641},
      ISSN = {0024-6107,1469-7750},
   MRCLASS = {60G60 (30C10 53C65)},
  MRNUMBER = {3742436},
       DOI = {10.1112/jlms.12086},
       URL = {https://doi.org/10.1112/jlms.12086},
}

@article {lerario-lundberg-PLMS16,
    AUTHOR = {Lerario, Antonio and Lundberg, Erik},
     TITLE = {On the geometry of random lemniscates},
   JOURNAL = {Proc. Lond. Math. Soc. (3)},
  FJOURNAL = {Proceedings of the London Mathematical Society. Third Series},
    VOLUME = {113},
      YEAR = {2016},
    NUMBER = {5},
     PAGES = {649--673},
      ISSN = {0024-6115,1460-244X},
   MRCLASS = {31A15 (14P25 30C10 30C15 60G60)},
  MRNUMBER = {3570241},
MRREVIEWER = {Malik\ Younsi},
       DOI = {10.1112/plms/pdw039},
       URL = {https://doi.org/10.1112/plms/pdw039},
}

@article {SVNagaevLargedeviationsofsumsofrandomvariables,
    AUTHOR = {Nagaev, S. V.},
     TITLE = {Large deviations of sums of independent random variables},
   JOURNAL = {Ann. Probab.},
  FJOURNAL = {The Annals of Probability},
    VOLUME = {7},
      YEAR = {1979},
    NUMBER = {5},
     PAGES = {745--789},
      ISSN = {0091-1798,2168-894X},
   MRCLASS = {60F10 (60F05 60F15 60G50)},
  MRNUMBER = {542129},
MRREVIEWER = {Werner\ Wolf},
       URL =
              {http://links.jstor.org/sici?sici=0091-1798(197910)7:5<745:LDOSOI>2.0.CO;2-S&origin=MSN},
}

@misc{cline-hsing,
      title={Large Deviation Probabilities for Sums of Random Variables with Heavy or Subexponential Tails}, 
      author={Daren B. H. Cline and Tailen Hsing},
      year={2022},
      eprint={2211.16340},
      archivePrefix={arXiv},
      primaryClass={math.PR},
      url={https://arxiv.org/abs/2211.16340}, 
}

@incollection {PemantleRivin,
    AUTHOR = {Pemantle, Robin and Rivin, Igor},
     TITLE = {The distribution of zeros of the derivative of a random
              polynomial},
 BOOKTITLE = {Advances in combinatorics},
     PAGES = {259--273},
 PUBLISHER = {Springer, Heidelberg},
      YEAR = {2013},
      ISBN = {978-3-642-30978-6; 978-3-642-30979-3},
   MRCLASS = {60G99 (30C10 30C15 60B10)},
  MRNUMBER = {3363974},
MRREVIEWER = {Dmitry\ Beliaev},
}

@article {KabluchkoCriticalpointsofrandompolynomia,
    AUTHOR = {Kabluchko, Zakhar},
     TITLE = {Critical points of random polynomials with independent
              identically distributed roots},
   JOURNAL = {Proc. Amer. Math. Soc.},
  FJOURNAL = {Proceedings of the American Mathematical Society},
    VOLUME = {143},
      YEAR = {2015},
    NUMBER = {2},
     PAGES = {695--702},
      ISSN = {0002-9939,1088-6826},
   MRCLASS = {30C10 (30C15 31A15)},
  MRNUMBER = {3283656},
MRREVIEWER = {Hiroshi\ Sekigawa},
       DOI = {10.1090/S0002-9939-2014-12258-1},
       URL = {https://doi.org/10.1090/S0002-9939-2014-12258-1},
}

@article {HaninBorisPairingofzeros,
    AUTHOR = {Hanin, Boris},
     TITLE = {Pairing of zeros and critical points for random polynomials},
   JOURNAL = {Ann. Inst. Henri Poincar\'e{} Probab. Stat.},
  FJOURNAL = {Annales de l'Institut Henri Poincar\'e{} Probabilit\'es et
              Statistiques},
    VOLUME = {53},
      YEAR = {2017},
    NUMBER = {3},
     PAGES = {1498--1511},
      ISSN = {0246-0203,1778-7017},
   MRCLASS = {30C10 (30C15 31A15 60G60)},
  MRNUMBER = {3689975},
MRREVIEWER = {Brian\ Simanek},
       DOI = {10.1214/16-AIHP767},
       URL = {https://doi.org/10.1214/16-AIHP767},
}

@article {Tao-sendov-conjecture,
    AUTHOR = {Tao, Terence},
     TITLE = {Sendov's conjecture for sufficiently-high-degree polynomials},
   JOURNAL = {Acta Math.},
  FJOURNAL = {Acta Mathematica},
    VOLUME = {229},
      YEAR = {2022},
    NUMBER = {2},
     PAGES = {347--392},
      ISSN = {0001-5962,1871-2509},
   MRCLASS = {30C15},
  MRNUMBER = {4554225},
MRREVIEWER = {Haseo\ Ki},
       DOI = {10.4310/acta.2022.v229.n2.a3},
       URL = {https://doi.org/10.4310/acta.2022.v229.n2.a3},
}

@article {Tao-Vu-Randommatrices-universality-circularlaw,
    AUTHOR = {Tao, Terence and Vu, Van},
     TITLE = {Random matrices: universality of {ESD}s and the circular law},
      NOTE = {With an appendix by Manjunath Krishnapur},
   JOURNAL = {Ann. Probab.},
  FJOURNAL = {The Annals of Probability},
    VOLUME = {38},
      YEAR = {2010},
    NUMBER = {5},
     PAGES = {2023--2065},
      ISSN = {0091-1798,2168-894X},
   MRCLASS = {60B20 (60F15 60F17)},
  MRNUMBER = {2722794},
MRREVIEWER = {Sasha\ Sodin},
       DOI = {10.1214/10-AOP534},
       URL = {https://doi.org/10.1214/10-AOP534},
}

@article {Kac-I,
    AUTHOR = {Kac, M.},
     TITLE = {On the average number of real roots of a random algebraic
              equation},
   JOURNAL = {Bull. Amer. Math. Soc.},
  FJOURNAL = {Bulletin of the American Mathematical Society},
    VOLUME = {49},
      YEAR = {1943},
     PAGES = {314--320},
      ISSN = {0002-9904},
   MRCLASS = {41.1X},
  MRNUMBER = {7812},
MRREVIEWER = {P.\ Erd\H os},
       DOI = {10.1090/S0002-9904-1943-07912-8},
       URL = {https://doi.org/10.1090/S0002-9904-1943-07912-8},
}

@article {Kac-II,
    AUTHOR = {Kac, M.},
     TITLE = {On the average number of real roots of a random algebraic
              equation. {II}},
   JOURNAL = {Proc. London Math. Soc. (2)},
  FJOURNAL = {Proceedings of the London Mathematical Society. Second Series},
    VOLUME = {50},
      YEAR = {1949},
     PAGES = {390--408},
      ISSN = {0024-6115},
   MRCLASS = {60.0X},
  MRNUMBER = {30713},
MRREVIEWER = {A.\ C.\ Offord},
       DOI = {10.1112/plms/s2-50.5.390},
       URL = {https://doi.org/10.1112/plms/s2-50.5.390},
}

@article {Rice-Mathematicalanalysisofrandomnoise,
    AUTHOR = {Rice, S. O.},
     TITLE = {Mathematical analysis of random noise},
   JOURNAL = {Bell System Tech. J.},
  FJOURNAL = {The Bell System Technical Journal},
    VOLUME = {23},
      YEAR = {1944},
     PAGES = {282--332},
      ISSN = {0005-8580},
   MRCLASS = {60.0X},
  MRNUMBER = {10932},
MRREVIEWER = {M.\ Kac},
       DOI = {10.1002/j.1538-7305.1944.tb00874.x},
       URL = {https://doi.org/10.1002/j.1538-7305.1944.tb00874.x},
}

@misc{berzin2022kacriceformulacontemporaryoverview,
      title={Kac-Rice formula: A contemporary overview of the main results and applications}, 
      author={Corinne Berzin and Alain Latour and José León},
      year={2022},
      eprint={2205.08742},
      archivePrefix={arXiv},
      primaryClass={math.CA},
      url={https://arxiv.org/abs/2205.08742}, 
}

@incollection {Linnik-Ontheprobabilityoflargedeviationsforthesumofindependentvariables,
    AUTHOR = {Linnik, Ju.\ V.},
     TITLE = {On the probability of large deviations for the sums of
              independent variables},
 BOOKTITLE = {Proc. 4th {B}erkeley {S}ympos. {M}ath. {S}tatist. and {P}rob.,
              {V}ol. {II}},
     PAGES = {289--306},
 PUBLISHER = {Univ. California Press, Berkeley-Los Angeles, Calif.},
      YEAR = {1960},
   MRCLASS = {60.30},
  MRNUMBER = {137142},
MRREVIEWER = {W.\ Hoeffding},
}

@article {Heyde-Onlargedeviationprobabilitiesinthecaseofattractiontoanon-normalstablelaw,
    AUTHOR = {Heyde, C. C.},
     TITLE = {On large deviation probabilities in the case of attraction to
              a non-normal stable law},
   JOURNAL = {Sankhy\=a{} Ser. A},
  FJOURNAL = {Sankhy\=a{} (Statistics). The Indian Journal of Statistics.
              Series A},
    VOLUME = {30},
      YEAR = {1968},
     PAGES = {253--258},
      ISSN = {0581-572X},
   MRCLASS = {60.30},
  MRNUMBER = {240854},
MRREVIEWER = {H.\ Kesten},
}

@article {Heyde-Onlargedeviationproblemsforsumsofrandomvariableswhicharenotattractedtothenormallaw,
    AUTHOR = {Heyde, C. C.},
     TITLE = {On large deviation problems for sums of random variables which
              are not attracted to the normal law},
   JOURNAL = {Ann. Math. Statist.},
  FJOURNAL = {Annals of Mathematical Statistics},
    VOLUME = {38},
      YEAR = {1967},
     PAGES = {1575--1578},
      ISSN = {0003-4851},
   MRCLASS = {60.30},
  MRNUMBER = {221564},
MRREVIEWER = {D.\ Siegmund},
       DOI = {10.1214/aoms/1177698712},
       URL = {https://doi.org/10.1214/aoms/1177698712},
}

@article {Heyde-Acontributiontothetheoryoflargedeviationsforsumsofindependentrandomvariables,
    AUTHOR = {Heyde, C. C.},
     TITLE = {A contribution to the theory of large deviations for sums of
              independent random variables},
   JOURNAL = {Z. Wahrscheinlichkeitstheorie und Verw. Gebiete},
  FJOURNAL = {Zeitschrift f\"ur Wahrscheinlichkeitstheorie und Verwandte
              Gebiete},
    VOLUME = {7},
      YEAR = {1967},
     PAGES = {303--308},
   MRCLASS = {60.30},
  MRNUMBER = {216549},
MRREVIEWER = {H.\ Kesten},
       DOI = {10.1007/BF00535016},
       URL = {https://doi.org/10.1007/BF00535016},
}

@article {Nagaev-II,
    AUTHOR = {Nagaev, A. V.},
     TITLE = {Integral limit theorems with regard to large deviations when
              {C}ram\'er's condition is not satisfied. {II}},
   JOURNAL = {Teor. Verojatnost. i Primenen.},
  FJOURNAL = {Akademija Nauk SSSR. Teorija Verojatnoste\u i\ i ee
              Primenenija},
    VOLUME = {14},
      YEAR = {1969},
     PAGES = {203--216},
      ISSN = {0040-361x},
   MRCLASS = {60.30},
  MRNUMBER = {247652},
MRREVIEWER = {J.\ L.\ Denny},
}

@article {Nagaev-I,
    AUTHOR = {Nagaev, A. V.},
     TITLE = {Integral limit theorems with regard to large deviations when
              {C}ram\'er's condition is not satisfied. {I}},
   JOURNAL = {Teor. Verojatnost. i Primenen.},
  FJOURNAL = {Akademija Nauk SSSR. Teorija Verojatnoste\u i\ i ee
              Primenenija},
    VOLUME = {14},
      YEAR = {1969},
     PAGES = {51--63},
      ISSN = {0040-361x},
   MRCLASS = {60.30},
  MRNUMBER = {247651},
}

@article {Mikosch-Nagaev-Largedeviationsofheavytailedsumswithapplicationsininsurance,
    AUTHOR = {Mikosch, T. and Nagaev, A. V.},
     TITLE = {Large deviations of heavy-tailed sums with applications in
              insurance},
   JOURNAL = {Extremes},
  FJOURNAL = {Extremes. Statistical Theory and Applications in Science,
              Engineering and Economics},
    VOLUME = {1},
      YEAR = {1998},
    NUMBER = {1},
     PAGES = {81--110},
      ISSN = {1386-1999,1572-915X},
   MRCLASS = {60F10 (60G50 60G70 62P05)},
  MRNUMBER = {1652936},
MRREVIEWER = {Boualem\ Djehiche},
       DOI = {10.1023/A:1009913901219},
       URL = {https://doi.org/10.1023/A:1009913901219},
}

@book {Mikosch-Embrechts-Claudia,
    AUTHOR = {Embrechts, Paul and Kl\"uppelberg, Claudia and Mikosch,
              Thomas},
     TITLE = {Modelling extremal events},
    SERIES = {Applications of Mathematics (New York)},
    VOLUME = {33},
      NOTE = {For insurance and finance},
 PUBLISHER = {Springer-Verlag, Berlin},
      YEAR = {1997},
     PAGES = {xvi+645},
      ISBN = {3-540-60931-8},
   MRCLASS = {60G70 (60Fxx 60K05 62M10 62P05)},
  MRNUMBER = {1458613},
MRREVIEWER = {R.\ A.\ Maller},
       DOI = {10.1007/978-3-642-33483-2},
       URL = {https://doi.org/10.1007/978-3-642-33483-2},
}

@article {Denisov-Dieker-Shneer-Largedeviationsforrandomwalksundersubexponentiality:thebig-jumpdomain,
    AUTHOR = {Denisov, D. and Dieker, A. B. and Shneer, V.},
     TITLE = {Large deviations for random walks under subexponentiality: the
              big-jump domain},
   JOURNAL = {Ann. Probab.},
  FJOURNAL = {The Annals of Probability},
    VOLUME = {36},
      YEAR = {2008},
    NUMBER = {5},
     PAGES = {1946--1991},
      ISSN = {0091-1798,2168-894X},
   MRCLASS = {60G50 (60F10)},
  MRNUMBER = {2440928},
       DOI = {10.1214/07-AOP382},
       URL = {https://doi.org/10.1214/07-AOP382},
}

@incollection {Nguyen-Van-Smallballprobabilityinversetheoremsandapplications,
    AUTHOR = {Nguyen, Hoi H. and Vu, Van H.},
     TITLE = {Small ball probability, inverse theorems, and applications},
 BOOKTITLE = {Erd\"os centennial},
    SERIES = {Bolyai Soc. Math. Stud.},
    VOLUME = {25},
     PAGES = {409--463},
 PUBLISHER = {J\'anos Bolyai Math. Soc., Budapest},
      YEAR = {2013},
      ISBN = {978-963-9453-18-0; 978-3-642-39285-6},
   MRCLASS = {60E15 (60C05 60F10)},
  MRNUMBER = {3203607},
MRREVIEWER = {Sho\ Matsumoto},
       DOI = {10.1007/978-3-642-39286-3\_16},
       URL = {https://doi.org/10.1007/978-3-642-39286-3_16},
}

@book {Aaronson-Jon-Anintroductiontoinfiniteergodictheory,
    AUTHOR = {Aaronson, Jon},
     TITLE = {An introduction to infinite ergodic theory},
    SERIES = {Mathematical Surveys and Monographs},
    VOLUME = {50},
 PUBLISHER = {American Mathematical Society, Providence, RI},
      YEAR = {1997},
     PAGES = {xii+284},
      ISBN = {0-8218-0494-4},
   MRCLASS = {28Dxx (58F11 58F17)},
  MRNUMBER = {1450400},
MRREVIEWER = {Cesar\ E.\ Silva},
       DOI = {10.1090/surv/050},
       URL = {https://doi.org/10.1090/surv/050},
}

@article {Kamae-Keane-Asimpleproofoftheratioergodictheorem,
    AUTHOR = {Kamae, Teturo and Keane, Michael},
     TITLE = {A simple proof of the ratio ergodic theorem},
   JOURNAL = {Osaka J. Math.},
  FJOURNAL = {Osaka Journal of Mathematics},
    VOLUME = {34},
      YEAR = {1997},
    NUMBER = {3},
     PAGES = {653--657},
      ISSN = {0030-6126},
   MRCLASS = {28D05},
  MRNUMBER = {1613108},
MRREVIEWER = {Nathaniel\ F. G. Martin},
       URL = {http://projecteuclid.org/euclid.ojm/1200787661},
}

\end{document}